\tikzstyle over=[preaction={draw,line width=6pt,white}]
\tikzset{every path/.style={thick}
}
\tikzset{baseline={([yshift=-.5ex]current bounding box.center)} }
\tikzstyle{point}=[postaction={decorate,decoration={markings,
\tikzstyle longleadsto=[
\newtheorem{theorem}{Theorem}[section]
\theoremstyle{definition}
\newtheorem{proposition}[theorem]{Proposition}
\newtheorem{lemma}[theorem]{Lemma}
\newtheorem{definition}[theorem]{Definition}
\newtheorem{remark}[theorem]{Remark}
\newtheorem{corollary}[theorem]{Corollary}
\newtheorem{conjecture}[theorem]{Conjecture}
\newtheorem{question}[theorem]{Question}
\def\red#1{\textcolor{red}{#1}} 
\def\blue#1{\textcolor{blue}{#1}}
\def\green#1{\textcolor{green}{#1}} 
\def\tr{\mathrm{tr}}
\def\BZ{\mathbbm Z}
\def\BQ{\mathbbm Q}
\def\BK{\mathbbm K}
\def\la{\langle}
\def\ra{\rangle}
\def\a{\alpha}
\def\b{\beta}
\def\g{\gamma}
\def\e{\epsilon}
\def\be{\begin{equation}}
\def\ee{\end{equation}}
\newcommand{\slt}{{\mathfrak{sl}(2)}}
\def\red#1{\textcolor[rgb]{1.00,0.00,0.00}{#1}}
\def\green#1{\textcolor[rgb]{0.00,0.70,0.30}{#1}}
\def\End{\mathrm{End}}
\def\LG{\mathrm{LG}}
\def\Alex{\Delta}
\def\diag{\mathrm{diag}}
\def\RT{F}
\def\mRT{\text{mRT}}
\def\tangle{T}
\newcommand{\ind}{\mathds{1}}
\def\pser#1{[\![#1]\!]}
\newcommand{\gl}{\ensuremath{\mathfrak{gl}}}
\def\namedlabel#1#2{\begingroup
    #2%
    \def\@currentlabel{#2}%
    \phantomsection\label{#1}\endgroup
}
\newcommand{\ev}{\operatorname{ev}}
\newcommand{\coev}{\operatorname{coev}}
\def\rev{\widetilde\ev}
\def\rcoev{\widetilde\coev}
\newcommand{\slthree}{\mathfrak{sl}_3}
\def\homog{\mathrm{Homog}}
\def\normal{\mathcal{N}}
\renewcommand\thepart{\@Roman\c@part}%
\renewcommand\part{%
   \if@noskipsec \leavevmode \fi
   \par
   \addvspace{6.7ex}%
   \@afterindentfalse
   \secdef\@part\@spart}
\def\@part[#1]#2{%
    \ifnum \c@secnumdepth >\m@ne
      \refstepcounter{part}%
      \addcontentsline{toc}{part}{Part~\thepart.\ #1}%
    \else
      \addcontentsline{toc}{part}{#1}%
    \fi
    {\parindent \z@ \raggedright
     \interlinepenalty \@M
     \normalfont
     \ifnum \c@secnumdepth >\m@ne
       \centering\large\scshape \partname~\thepart.%
       \hspace{1ex}%
     \fi%
     \large\scshape #2%
     \markboth{}{}\par}%
    \nobreak
    \vskip 4.7ex
    \@afterheading}
  \def\@spart#1{
  \refstepcounter{part}%
  \addcontentsline{toc}{part}{#1}%
    {\parindent \z@ \raggedright
     \interlinepenalty \@M
     \normalfont
     \centering\large\scshape #1\par}%
     \nobreak
     \vskip 4.7ex
     \@afterheading}
\renewcommand*\l@part[2]{%
  \ifnum \c@tocdepth >-2\relax
    \addpenalty\@secpenalty
    \addvspace{0.75em \@plus\p@}%
    \begingroup
      \parindent \z@ \rightskip \@pnumwidth
      \parfillskip -\@pnumwidth
      {\leavevmode
       \normalsize \bfseries #1\hfil \hb@xt@\@pnumwidth{\hss #2}}\par
       \nobreak
       \if@compatibility
         \global\@nobreaktrue
         \reverypar{\global\@nobreakfalse\reverypar{}}%
      \fi
    \endgroup
  \fi}
\def\l@subsection{\@tocline{2}{0pt}{2pc}{6pc}{}}
\begin{document}

\title[On some log-concavity properties of the Alexander and $\LG$ invariants]{On some log-concavity properties of the Alexander-Conway and Links-Gould invariants}

\author[M. Harper]{Matthew Harper}
\address{Michigan State University, East Lansing, Michigan, USA}
\email{mrhmath@proton.me}
   
\author[B.-M. Kohli]{Ben-Michael Kohli}
\address{Section de Math\'ematiques, Universit\'e de Gen\`eve \\
rue du Conseil-G\'en\'eral 7-9, 1205 Gen\`eve, Switzerland}
\email{bm.kohli@protonmail.ch}

\author[J. Song]{Jiebo Song}
\address{Beijing Institute of Mathematical Sciences and Applications,
  Huairou District, Beijing, China}
\email{songjiebo@bimsa.cn}

\author[G. Tahar]{Guillaume Tahar}
\address{Beijing Institute of Mathematical Sciences and Applications,
  Huairou District, Beijing, China}
\email{guillaume.tahar@bimsa.cn}

\thanks{
  {\em Key words and phrases:}
  Links--Gould invariant, Alexander polynomial, alternating links, unimodality, log-concavity, Lorentzian polynomials.
}

\date{\today }

\begin{abstract}
The Links--Gould invariant $\LG(L ; t_0, t_1)$ of a link $L$ is a two-variable quantum generalization of the Alexander--Conway polynomial $\Delta_L(t)$ and has been shown to share some of its most geometric features in several recent works. Here we suggest that $\LG$ likely shares another of the Alexander polynomial’s most distinctive - and mysterious - properties: for alternating links, the coefficients of the Links-Gould polynomial alternate and appear to form a log-concave two-indexed sequence with no internal zeros. The former was observed by Ishii for knots with up to 10 crossings. We further conjecture that they satisfy a bidimensional property of unimodality, thereby replicating a long-standing conjecture of Fox (1962) regarding the Alexander polynomial, and a subsequent refinement by Stoimenow. We also point out that the Stoimenow conjecture reflects a more structural phenomenon: after a suitable normalization, the Alexander polynomial of an alternating link appears to be a Lorentzian polynomial. We give {compelling} experimental and computational evidence for these different properties.
\end{abstract}

\maketitle

{\footnotesize
\tableofcontents
}


\section{Introduction}

\subsection{The Alexander and Links-Gould polynomials of links}

The Alexander polynomial $\Delta_L$ is an isotopy invariant of links $L$ valued in $\BZ[t^{\pm1}]$ and defined up to units. It was first defined in 1928 by James Waddell Alexander \cite{Alexander28} and was the first polynomial invariant of knots. Remarkably, the Alexander polynomial remained the only known polynomial link invariant until Vaughan Jones discovered his eponymous polynomial half a century later \cite{Jones85}. Designed to distinguish links up to isotopy, the Alexander polynomial also relates to some fundamental geometric properties of knots and links. For example, the degree of $\Delta_L(t)$ is a lower bound for the 3-genus of $L$ \cite{Seifert35}, and the Alexander polynomial of a fibered knot is monic \cite{Neuwirth65, Rapaport60, Stallings62}. The Alexander invariant is also relevant to study the four-dimensional properties of a knot. For example, for a topologically slice knot $K$, $\Delta_K(t)$ takes a particular form known as the Fox-Milnor condition \cite{FM66}.

It is also worth noting that $\Delta_L(t)$ can be constructed as a quantum link invariant, that is, a Reshetikhin--Turaev-type invariant where it occurs in its palindromic normalization as the Alexander--Conway polynomial. It arises in this way by considering representations of either: the Hopf algebra $U_q\slt$ at fourth roots of unity, or the Hopf superalgebra $U_q\gl(1|1)$, see \cite{murakami2006multi, Viro07}. In the latter perspective, the Alexander polynomial is the first in the family of Links--Gould invariants $\LG^{m,n}(L;t_{0},t_{1}) \in \mathbb{Z}[t_0^{\pm 1} , t_1^{\pm 1}]$ obtained from the Reshetikhin--Turaev construction applied to representations of the Hopf superalgebras $U_{q}\mathfrak{gl}(m \vert n)$, see \cite{Links-Gould, DWKL, DW01}. The simplest case after the Alexander polynomial is derived from $U_{q}\mathfrak{gl}(2 \vert 1)$. We shall denote the associated invariant $\LG^{2,1}$ by $\LG$ and refer to it as the Links--Gould polynomial. 

Although $\LG$ is not yet known to have a classical geometric interpretation that would relate it even more closely to the Alexander polynomial, recent results show that it reproduces some of the Alexander polynomial's most distinctive characteristics. Specifically, the degree of the Links--Gould polynomial gives a lower bound for the genus of a knot by work of two of the authors \cite{Kohli-Tahar}. Also, the $\LG$ invariant of a fibered link is $\mathbb{Z}[q^{\pm 1}]$-monic following the recent work of L\'opez-Neumann and van der Veen \cite{LNVdV25}. These two properties imply and improve upon similar statements for the Alexander polynomial. This is because $\LG(L;t_0,t_1)$ specializes to $\Delta_L(t)$ in at least two different ways \cite{DWIL, Kohli, KPM}: 
\be
\label{LGspec}
\LG(L;t_0,-t_0^{-1}) = \Alex_L(t_0^2)\,, \qquad
\LG(L; t_0, t_0^{-1}) = \Alex_L(t_0)^2.
\ee
It is therefore reasonable to imagine that other properties of the Alexander polynomial should have their counterparts for the Links-Gould invariant.

\subsection{The Alexander polynomial of alternating links} A class where the Alexander polynomial has an interesting behavior is that of alternating links: the Alexander polynomial of an alternating link is alternating with no internal zeros by works of Crowell \cite{Cro} and Murasugi \cite{Mur}. This means that if we consider the Conway normalization of $\Delta_L(t)$, we can write:
$$
\Delta_L(t) = \sum_{k=-n}^{n} a_k t^k, \quad \text{with } a_l \cdot a_{l+1} < 0 \text{ for all } l \in \pser{-(n-1), n-1}\,.
$$
Moreover, Fox conjectured that the absolute values of the coefficients of the Alexander polynomial of an alternating link form a unimodal sequence \cite{Fox}, i.e. 
$$
|a_{-n}| \leq |a_{-n+1}| \leq \cdots \leq |a_k| = |a_{k+1}| = \cdots = |a_m| \geq |a_{m+1}| \geq \cdots \geq |a_n| \text{, } -n \leq k \leq m \leq n \,.
$$
Stoimenow refined the statement by conjecturing that the sequence should be log-concave with no internal zeros \cite{Sto}. This in turn would imply that the sequence is trapezoidal:
$$
|a_{-n}| < |a_{-n+1}| < \cdots < |a_k| = |a_{k+1}| = \cdots = |a_m| > |a_{m+1}| > \cdots > |a_n| \text{, } -n \leq k \leq m \leq n \,.
$$
Different forms of the Fox conjecture have been proved over the years for many infinite families of knots and links, see \cite{Par, Hartley, Mur85, OZ, Jong, Sto16, AC, Hafner, azarpendar24, kalman25}. A summary of these studies and their scope can be found in \cite{azarpendar24}. It also includes several recent works \cite{AC, Hafner, azarpendar24, kalman25}. The general conjecture, however, remains unsolved to this day.

\subsection{Similar properties for the Links-Gould polynomial of alternating links?} In his early foundational studies on the Links-Gould invariant, Ishii conjectured that $\LG$ displays the same ``alternating'' behavior as the Alexander polynomial on alternating knots.

\begin{conjecture}[\cite{Ishii}]\label{conj.0}
    The LG polynomial $\LG(K ; t_0, t_1) = \sum_{i,j} a_{ij} t_0^i t_1^j$ of an alternating knot $K$ is ``alternating'': 
$a_{ij} a_{i'j'} \geq 0 \quad \text{if } i + j - i' - j' \text{ is even, and} \quad
a_{ij} a_{i'j'} \leq 0 \quad \text{otherwise}$.
\end{conjecture}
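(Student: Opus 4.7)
The strategy is to transpose the Crowell--Murasugi argument for $\Alex_L$ to the bivariate setting using a state-sum description of $\LG$ coming from the Reshetikhin--Turaev construction on the four-dimensional typical representation $V$ of $U_q\gl(2|1)$. Observe first that the conjecture is equivalent to the positivity statement that, after multiplication by a global unit in $\{\pm 1\}$, the polynomial $\LG(K;-t_0,-t_1)$ has only nonnegative coefficients whenever $K$ is alternating, since the substitution $(t_0,t_1)\mapsto(-t_0,-t_1)$ multiplies each monomial $t_0^{i}t_1^{j}$ by $(-1)^{i+j}$. It therefore suffices to prove this nonnegativity.

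\noindent\textbf{Steps.} First, I would fix a reduced alternating diagram $D$ for $K$, equipped with its checkerboard shading. Second, I would expand $\LG(D;t_0,t_1)=\sum_{s} w(s)$ as a sum over states $s$, i.e.\ assignments of basis elements of $V$ to the arcs of $D$, where each $w(s)$ is a product of R-matrix entries at the crossings together with cup/cap and pivotal contributions; each $w(s)$ is a signed monomial in $\BZ[t_0^{\pm 1/2},t_1^{\pm 1/2}]$. Third, I would classify the nonzero entries $R^{ab}_{cd}$ of the $\LG$ R-matrix and of $R^{-1}$ according to (i) their intrinsic sign and (ii) the parity of the bidegree $(\deg_{t_0},\deg_{t_1})$ they contribute; this is an explicit but finite verification on the R-matrix. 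Fourth, I would exploit the checkerboard structure of the alternating diagram to show that at every state $s$ the total sign of $w(s)$ matches $(-1)^{\deg_{t_0}w(s)+\deg_{t_1}w(s)}$ up to a global factor that depends only on $D$ through its writhe and Seifert-circle count.

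\noindent\textbf{Main obstacle.} The crux is the last sign-matching step. For the Alexander polynomial the analogous R-matrix of $U_q\gl(1|1)$ is $4\times 4$, and the alternating structure converts into a simple parity statement via the Kauffman state-sum. In the $\LG$ case the R-matrix is a $16\times 16$ supermatrix whose nonzero entries carry signs coming both from the braiding and from the supergrading of $V$, and it is not a priori clear that these signs combine with the bidegree parity in a way compatible with the alternating pattern. What seems required is a bivariate analogue of Kauffman's clock theorem whose state graph tracks both $t_0$- and $t_1$-degrees modulo $2$ simultaneously; establishing the existence of such a structure is the main hurdle.

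\noindent\textbf{Fallback.} If a direct state-sum proof proves intractable, an alternative is to exploit the two specializations in \eqref{LGspec} simultaneously. Each, combined with the Crowell--Murasugi theorem for $\Alex_L$, yields a linear constraint on the signed sums $\sum_{i-j=k}(\pm1)^{j} a_{ij}$; coupling these with an inductive skein argument on the cubic skein relation satisfied by $\LG$ might reduce the claim to a finite base case. Alternatively, one could try to establish a Lorentzian-type positivity certificate for $\LG(K;-t_0,-t_1)$ itself, in the spirit of the other conjectures of this paper, which would directly imply Conjecture~\ref{conj.0}.
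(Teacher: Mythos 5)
What you have written is a research plan, not a proof, and it is worth being clear that the paper you are trying to match does not prove Conjecture~\ref{conj.0} either: this statement is an \emph{open conjecture} due to Ishii, verified computationally for prime knots up to $10$ crossings by Ishii himself and, via the equivalence with the Garoufalidis--Kashaev $V_1$-polynomial, for all prime knots up to $16$ crossings. The paper presents only this evidence, plus verifications on two infinite families ($(2,n)$-torus links and twist knots) obtained from explicit closed formulas, and the present authors in fact go on to propose strengthenings (Conjectures~\ref{conj.1}--\ref{conj.3}). So there is no ``paper's own proof'' to compare against.

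Your state-sum strategy (steps 1--3) is a sensible way to attack the problem, but step 4 is where you yourself correctly flag the gap, and the gap is real: unlike the $U_q\gl(1|1)$ Alexander $R$-matrix, the $16\times 16$ $\LG$ $R$-matrix has entries whose signs do not obviously correlate with bidegree parity in a way forced by the checkerboard structure, and no bivariate analogue of the clock theorem is in the literature. Nothing in your outline closes this. Your fallback is also problematic: the paper explicitly notes (citing earlier work of Kohli) that neither specialization in Equation~\eqref{LGspec} allows the known Crowell--Murasugi alternating property of $\Delta_L$ to be transported to the Links--Gould polynomial, so the ``linear constraints from specialization plus skein induction'' route is already known to be insufficient. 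The Lorentzian certificate idea in your fallback is also undercut by Subsection~\ref{subs.noLG}, where the authors show that $\LG$ fails to be denormalized Lorentzian for the trefoil under several natural changes of variables. In short: your proposal is an honest sketch that correctly identifies the hard step, but it neither proves the conjecture nor matches the paper, because the paper does not claim a proof either.
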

Ishii verified Conjecture~\ref{conj.0} for all prime knots with up to $10$ crossings. Using the fact that $\LG$ is equivalent by a change of variables to the Garoufalidis--Kashaev $V_1$-polynomial of links \cite{GK:multi,GHKST,GHKKST}, the computations of Garoufalidis--Li for $V_1$ show that the Ishii conjecture is true for all prime knots with up to $16$ crossings, see \cite{GL:patterns}. \\

In this work we conjecture that once again the Links-Gould invariant $\LG$ and the Alexander polynomial follow similar patterns. First, Conjecture~\ref{conj.0} can be made more precise, echoing the behavior of the Alexander polynomial.

\begin{definition}
    A two-indexed sequence $(a_{ij})_{i,j \in \mathbb{Z}}$ is said to have \emph{no interior zeros} if 
    $$
    \text{Conv}(\text{Supp}(a_{ij})) \cap \mathbb{Z}^2 = \text{Supp}(a_{ij}) \,,
    $$
    where $
    \text{Supp}(a_{ij}) = \{ (i,j) \in \mathbb{Z}^2 \, : \, a_{ij} \neq 0   \} 
    $ and $\text{Conv}(X)$ is the convex hull of a subset $X$ of $\mathbb{R}^2$.
\end{definition}

\begin{conjecture}[strong version of the Ishii conjecture]\label{conj.1}
    The LG polynomial $\LG(L ; t_0, t_1) = \sum_{i,j} a_{ij} t_0^i t_1^j$ of an \emph{alternating link} $L$ with $\mu$ components is ``alternating'' with 
    $$a_{ij} = (-1)^{\mu + i + j + 1} \, |a_{ij}|  $$
    for all $i,j$. Moreover, $(a_{ij})$ has no interior zeros.
\end{conjecture}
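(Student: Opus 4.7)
The plan is to adapt the state-sum arguments that Kauffman, Crowell, and Murasugi used for the Alexander polynomial to the Links--Gould setting. Starting from a reduced alternating diagram $D$ of $L$, the Reshetikhin--Turaev construction applied to the four-dimensional typical representation of $\Uq\gl(2|1)$ realizes $\LG(L;t_0,t_1)$ as a sum over labelings of the edges of $D$ by basis vectors, each state $s$ contributing a monomial $c_s\, t_0^{i(s)} t_1^{j(s)}$ whose weight is determined locally at each crossing by the entries of the $R$-matrix. The first step is to fix a gauge of the $R$-matrix in which the sign of every nonzero local weight depends only on the parities of the incident labels together with the sign of the crossing. This is where the bookkeeping is heavier than for $\Alex_L$, since the state space at each strand is four-dimensional rather than two-dimensional, but the explicit structure of the $\Uq\gl(2|1)$ $R$-matrix is compatible with such a normalization.

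Second, using the fact that in an alternating diagram each strand passes alternately over and under at consecutive crossings, one should show by a checkerboard-coloring argument that the total sign of every surviving state factors as $(-1)^{\mu + i(s) + j(s) + 1}$. This is the direct analog of the Crowell--Murasugi identity for $\Alex_L$ and should reduce to a parity computation local to each region of the diagram complement. In particular, all states of a fixed bidegree $(i,j)$ would then contribute with the same sign, so no cancellations occur in the sum defining $a_{ij}$, yielding $a_{ij} = (-1)^{\mu+i+j+1}\,|a_{ij}|$ as asserted in Conjecture~\ref{conj.1}.

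For the absence of interior zeros I would pass to the Newton polygon $N = \text{Conv}(\text{Supp}(\LG(L)))$. Combined with the sign rule from the previous step, $a_{ij}\neq 0$ becomes equivalent to the existence of at least one state $s$ of $D$ with bidegree $(i(s),j(s)) = (i,j)$. The specializations \eqref{LGspec} constrain the diagonal projections of $\text{Supp}(\LG(L))$ to match the supports of $\Alex_L(t)^2$ and $\Alex_L(t^2)$, where no-internal-zeros is classical, and this already pins down the extremal edges of $N$. What remains is the genuinely two-dimensional claim that for every lattice point $(i,j) \in N \cap \BZ^2$ there exists a $\Uq\gl(2|1)$-state of $D$ with that bidegree. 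The main obstacle is precisely this last step: even the one-variable Fox conjecture remains open in full generality, and in two variables the convex hull of the support is genuinely planar rather than an interval, so one must rule out gaps in many more directions than for $\Alex_L$. I expect first progress to come for families of alternating links admitting a Murasugi-sum or adequate-tangle decomposition, via induction on the crossing number and by leveraging recent progress on the Fox conjecture such as \cite{AC, Hafner, azarpendar24, kalman25}.
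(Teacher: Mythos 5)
The statement you are addressing is Conjecture~\ref{conj.1}, which the paper explicitly presents \emph{as a conjecture}: there is no proof of it in the paper. What the paper does provide is (i) computational verification for all prime alternating knots with up to $12$ crossings (plus a selection with $13$--$16$ crossings and small braid index) via de Wit's $\LG$-explorer, and (ii) explicit closed-form proofs for two infinite families, the $(2,n)$-torus links and the twist knots. Those family proofs do not proceed by a diagrammatic state-sum at all: they use algebraic skein/trace identities (Lemmas~\ref{valuetorus} and~\ref{valuewist}) to produce manifestly alternating formulas whose coefficient ``mountains'' are read off directly. So your proposal is not comparable to a proof in the paper, because none exists; and as a standalone argument it does not establish either clause of Conjecture~\ref{conj.1}, as you yourself concede in the final sentences.

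Beyond that global point, two steps in your outline are weaker than they may appear. First, the gauge/sign-rule step is substantially harder here than for $\Alex_L$: the $\Uq\gl(2|1)$ $R$-matrix displayed in Section~\ref{sec:DefLG} has entries lying in the quadratic extension $\BQ(t_0,t_1)[Y]$ with $Y=\sqrt{(t_0-1)(1-t_1)}$ and with half-integer powers $t_0^{1/2},t_1^{1/2}$; that $\LG(L)$ lies in $\BZ[t_0^{\pm1},t_1^{\pm1}]$ at all is already a nontrivial theorem (Ishii, and GHKKST), obtained only \emph{after} cancellations kill those radicals, so a state-sum in which every surviving state carries the sign $(-1)^{\mu+i+j+1}$ is not a routine normalization but a genuinely new structural claim. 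Second, the specializations~\eqref{LGspec} only constrain two one-dimensional diagonal projections of $\text{Supp}(\LG(L))$; the paper itself warns that these specializations do not let one transport alternating-link structure between $\Alex_L$ and $\LG$, and in particular they say almost nothing about lattice points in the interior of the Newton polygon, which is exactly where the no-interior-zeros clause lives. If you want a proof-producing version of your program, the right starting point is to redo, via your state-sum, what the paper actually proves: the torus-link and twist-knot families, where you can check your conjectured sign rule and support claim against Figures~\ref{fig:FIG8}--\ref{fig:FIG11}.
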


\begin{remark}
    Conjecture~\ref{conj.1} is \emph{not} true for other two-variable generalizations of the Alexander polynomial, i.e. for the two variable polynomial invariant $\Delta_{\slthree}$ derived from representations of $U_q\mathfrak{sl}_3$ at a fourth root of unity, studied by one of the authors \cite[Figure 10]{Harper2020}.
\end{remark}

We further propose that $\LG$ should satisfy bidimensional properties of log-concavity and unimodality, thereby generalizing, at least in spirit, the Fox and Stoimenow conjectures.

\begin{definition}
A sequence $(a_{i})_{i \in \mathbb{Z}}$ is \emph{log-concave} if, for every \( i \in \mathbb{Z} \), we have: $a_{i-1} \, a_{i+1} \leq a_i^2$.
\end{definition}

\begin{remark}
    Log-concavity, as well as its generalizations we will define hereafter, is invariant under multiplication by a monomial.
\end{remark}

\begin{definition}
   We will refer to a bidimensional sequence $(a_{ij})_{i,j \in \mathbb{Z}}$ as \emph{2d log-concave} if for every $i,j,k,l \in \mathbb{Z}$ :
   $$a_{i+k,j+l} \, a_{i-k,j-l} \leq a_{ij}^2\,.$$
\end{definition}

We understand 2d log-concavity as being indicative of the sequence being log-concave ``in all possible directions''. In particular if a sequence is 2d log-concave, its restriction to any row, column or diagonal in $\mathbb{Z}^2$ is a log-concave singly-indexed sequence.

\begin{conjecture}[2d log-concavity]\label{conj.2}
For any alternating link $L$, the absolute values of the coefficients of $\LG(L ; t_0, t_1)$ form a 2d log-concave sequence (with no interior zeros).
\end{conjecture}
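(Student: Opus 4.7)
Our plan is to approach Conjecture~\ref{conj.2} by establishing that, once normalized as in Conjecture~\ref{conj.1} so that all coefficients become nonnegative, the Links--Gould polynomial of an alternating link is a \emph{Lorentzian polynomial} in the sense of Br\"and\'en--Huh. Lorentzian polynomials have M-convex supports and satisfy log-concavity inequalities along every line of their support; in two variables this is exactly the ``log-concave in every direction'' condition together with the absence of interior zeros. This structural angle would unify the two assertions of the conjecture and mesh with the more conceptual Lorentzian picture foreshadowed in the abstract for the Alexander polynomial itself.

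The first step is to fix a reduced alternating diagram $D$ of $L$ and to unpack the Reshetikhin--Turaev state sum for $\LG$ coming from the relevant $\Uq\gl(2|1)$-representation, preferably in a Kauffman-state--like or Burau-type form, for instance via the reformulations of L\'opez-Neumann--van der Veen or Garoufalidis--Kashaev. The aim is to arrange the state sum so that on an alternating diagram each state contributes to exactly one monomial $t_0^i t_1^j$, with sign dictated by the parity $\mu+i+j$; this would simultaneously reprove Conjecture~\ref{conj.1} and reduce the problem to a positivity/structural statement about the auxiliary polynomial $P_L(t_0,t_1) = \sum_{i,j}|a_{ij}|\,t_0^i t_1^j$. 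Both one-variable specializations in \eqref{LGspec} would then have to yield polynomials enjoying the Stoimenow log-concavity of $\Alex_L$, providing a useful sanity check at each stage.

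The next step is to reveal a combinatorial object underlying $P_L$. The most natural candidate is a (poly)matroid built from the Tait checkerboard graph of $D$, whose basis generating function recovers $P_L$ after a monomial rescaling; one could then invoke the closure of Lorentzian polynomials under products, polarization, monomial substitutions, and directional differentiation to propagate the property along skein-type decompositions or tangle-replacement moves. A reasonable inductive base would consist of two-bridge links and alternating Montesinos links, where $\LG$ admits explicit formulas, and one would climb to all reduced alternating diagrams by induction on the number of twist regions, each extension amounting to a Lorentzian-preserving operation on $P_L$.

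The principal obstacle is that already the univariate specializations of Conjecture~\ref{conj.2} imply the Fox--Stoimenow conjecture on $\Alex_L$, which remains open in general. A complete proof would therefore resolve Fox--Stoimenow as a byproduct, so a realistic intermediate target is to establish the Lorentzian property for families in which Fox--Stoimenow is already known, and to show its preservation under the tangle operations that generate them. The subtlest point, and in our view the main technical bottleneck, is whether the local $\Uq\gl(2|1)$ $R$-matrix at an alternating crossing admits a \emph{manifestly Lorentzian} form on the relevant weight spaces: if it does, Lorentzian-preserving contractions would give the global statement almost for free; if it does not, a clever resummation of states will be needed, and designing this resummation is where the main effort is likely to concentrate.
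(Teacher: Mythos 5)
The statement you are addressing is a \emph{conjecture}, and the paper does not prove it; it only supplies evidence. Specifically, the paper (i) runs a computational check on the $\LG$-explorer database covering all alternating prime knots through 12 crossings plus a selection through 16, (ii) gives an explicit closed form for $\LG(T(2,n))$ and verifies 2d log-concavity directly from that formula (the absolute coefficients take only values in $\{0,1,2\}$, so the check reduces to a short finite case analysis), and (iii) gives an explicit closed form for $\LG(K_n)$ on twist knots and verifies 2d log-concavity by induction on the number of half twists. Your proposal is a speculative research program rather than a proof, and to your credit you say so; but two of its load-bearing claims are directly contradicted by the paper.

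First, the Lorentzian angle you propose as the unifying mechanism is shown to fail in Subsection~\ref{subs.noLG}: the paper exhibits several natural choices of variables for $\LG(\mathsf{3_1})$ (including $(t_0,t_1)$, $(t_0,t_1^{-1})$, the Garoufalidis--Kashaev $V_1$ coordinates, and two Conway-type desymmetrizations) and in each case computes a Hessian with two positive eigenvalues, so the normalized homogenization is \emph{not} Lorentzian. The authors conjecture that some yet-unknown change of variables may fix this, but your plan of reading off Lorentzianity from a state sum with a ``manifestly Lorentzian'' local $R$-matrix would already break at the trefoil for every variable choice they tested. Any Lorentzian route has to first identify the right coordinates, which the paper presents as an open problem, not a starting point.

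Second, you assert that the univariate specializations of Conjecture~\ref{conj.2} would imply Fox--Stoimenow for $\Alex_L$. The paper explicitly states the opposite: neither specialization in Equation~\eqref{LGspec} allows the Alexander-side properties to be deduced from the $\LG$-side properties, for the same reason this was already observed for the Ishii conjecture. Concretely, the specializations $t_1=\pm t_0^{-1}$ sum $\LG$-coefficients along the lines $i-j=\text{const}$, but the Ishii sign pattern $a_{ij}=(-1)^{\mu+i+j+1}|a_{ij}|$ alternates along those very lines, so the resulting coefficients of $\Delta_L(t_0^2)$ and $\Delta_L(t_0)^2$ are alternating sums of the $|a_{ij}|$, and 2d log-concavity of $|a_{ij}|$ gives no control over them. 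Your ``sanity check'' at each stage would therefore not be one, and the claimed obstacle (that Conjecture~\ref{conj.2} subsumes Fox--Stoimenow) is not established. What the paper does argue is the converse-flavoured heuristic: \emph{if} Fox--Stoimenow holds, then $\homog(\Delta_L(-t)^2)$ is denormalized Lorentzian by Propositions~\ref{conj.4} and~\ref{lorentzproduct}, and this is offered only as motivation for guessing that $\LG$ might have analogous structure, not as a logical implication in either direction.
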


\begin{definition}
    We say that a two-indexed sequence $(a_{ij})_{i,j \in \mathbb{Z}}$ is \emph{unimodal} if, for any $K \geq 0$, the two-indexed sequence $(a_{ij} \, \ind_{|a_{ij}| \geq K})_{i,j \in \mathbb{Z}}$ has no interior zeros.
\end{definition}

\begin{conjecture}[unimodality]\label{conj.3}
For any alternating link $L$, the 2d sequence of absolute values of the coefficients of $\LG(L ; t_0, t_1)$ is unimodal. 
\end{conjecture}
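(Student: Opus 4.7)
\emph{Proof strategy.} My plan is to deduce Conjecture~\ref{conj.3} from Conjectures~\ref{conj.1} and~\ref{conj.2}. Fix $K \geq 0$ and let $S_K := \{(i,j) : |a_{ij}| \geq K\}$; proving unimodality amounts to showing $\text{Conv}(S_K) \cap \mathbb{Z}^2 = S_K$. For any $(i,j) \in \text{Conv}(S_K) \cap \mathbb{Z}^2$, the no-interior-zeros clause of Conjecture~\ref{conj.1} immediately ensures $a_{ij} \neq 0$. The key move is then a reduction to the one-dimensional case: restricting the sequence of absolute values to lattice points along any primitive rational line $\{(i + kp, j + kq) : k \in \mathbb{Z}\}$ through $(i,j)$ produces a one-dimensional sequence that inherits log-concavity from Conjecture~\ref{conj.2} (specialized to $(k,l) = (p,q)$) and the absence of internal zeros from Conjecture~\ref{conj.1} (restricted to the line), and is hence classically unimodal. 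Whenever two lattice points of $S_K$ can be arranged to bracket $(i,j)$ along some such line, $|a_{ij}| \geq K$ follows immediately from the 1d unimodality of the restricted sequence.

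The technically delicate point is guaranteeing the existence of such a bracketing direction for every lattice point of $\text{Conv}(S_K)$. For points in the interior of $\text{Conv}(S_K)$ this is straightforward by a pigeonhole argument on the finite set $S_K$, but boundary points require more care, most likely via an inductive argument on the nested super-level sets. A cleaner and more powerful alternative would be to establish the stronger structural property that a suitable normalization and homogenization of the Links-Gould polynomial of an alternating link is a \emph{Lorentzian polynomial} in the sense of Br\"and\'en-Huh, the very phenomenon proposed elsewhere in the paper for the Alexander polynomial via the specializations~\eqref{LGspec}. A Lorentzian polynomial has M-convex support, and its coefficients form an M-concave function on $\mathbb{Z}^2$; this automatically entails the absence of interior zeros, the full 2d log-concavity of Conjecture~\ref{conj.2}, and convex-saturation of every super-level set, which is precisely Conjecture~\ref{conj.3}. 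Proving Lorentzianness would therefore resolve all three conjectures in a single stroke.

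The principal obstacle in either approach is establishing any of these structural statements in the first place, since the Reshetikhin-Turaev definition of $\LG$ offers no direct combinatorial handle on its coefficients. By contrast, the Crowell-Murasugi arguments for the Alexander polynomial exploit Seifert matrices, Kauffman state sums, and skein manipulations, tools which have no obvious analogue in the non-semisimple categorical setting underlying $\LG$. The most promising current avenues appear to be via the Burau-type matrix formulation of L\'opez-Neumann-van der Veen or the cluster-theoretic expression of the equivalent $V_1$ invariant due to Garoufalidis-Kashaev: producing an alternating-diagram state sum within one of these frameworks whose partition function is manifestly Lorentzian, or at least manifestly 2d log-concave with no interior zeros, would be the decisive technical step.
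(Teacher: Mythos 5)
The statement you are asked to prove is labeled a \emph{conjecture}, and the paper does not prove it: Section~\ref{sec:Evidence} offers only computational verification (all alternating prime knots up to $12$ crossings plus a selection up to $16$) and direct proofs for two infinite families, namely $(2,n)$-torus links and twist knots, using explicit closed-form expressions for $\LG$ on those families (Lemmas~\ref{valuetorus} and~\ref{valuewist}). Your submission is a strategy discussion rather than a proof, and it honestly flags its own unresolved steps (``the technically delicate point,'' ``require more care,'' ``the decisive technical step''), so there is nothing that could be checked line-by-line against an argument that, in any case, does not exist in the paper. A few concrete issues beyond the acknowledged incompleteness: first, deducing Conjecture~\ref{conj.3} from Conjectures~\ref{conj.1} and~\ref{conj.2} would yield at best a \emph{conditional} statement, since those two are themselves open; even a fully rigorous bracketing argument would not constitute a proof of the conjecture. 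Second, the ``pigeonhole on $S_K$'' claim for interior lattice points of $\text{Conv}(S_K)$ is not actually straightforward: for $S_K = \{(0,0),(5,0),(0,5)\}$ the interior lattice point $(1,1)$ lies on no lattice segment with both endpoints in $S_K$, so you would already need an iterative enlargement of $S_K$ (first filling in boundary edges via the one-dimensional trapezoidal property of Corollary~\ref{cor.logconcave}, then passing inward) before any bracketing direction exists. That inductive step is exactly the part you wave at.

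Finally, your ``cleaner and more powerful alternative'' of establishing a Lorentzian structure runs directly into Subsection~\ref{subs.noLG} of the paper: it is shown there, by explicit Hessian computations, that $\normal\circ\homog(\LG(\mathsf{3_1}))$ is \emph{not} Lorentzian for the natural variables $(t_0,t_1)$, nor for $(t_0,t_1^{-1})$, nor for the Garoufalidis--Kashaev variables $(p,q)$, nor for two further Conway-type substitutions. The paper speculates that some as-yet-unknown change of variables might make $\LG$ denormalized Lorentzian, but the ``single stroke'' route you propose is demonstrably blocked along every obvious path. You would be better served framing your remarks as a discussion of why the conjecture is plausible and what obstacles stand in the way, rather than as a proof proposal, and in particular the correct response to this prompt is to supply what the paper actually gives: a verification for a computable family (the closed formulas for torus links or twist knots, followed by the casewise log-concavity/unimodality check), not a sketch of a general argument that no one yet knows how to complete.
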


We will give evidence in support of these assumptions. Specifically, we show that Conjectures~\ref{conj.1}, \ref{conj.2}, and \ref{conj.3} hold for all alternating prime knots with up to 12 crossings, as well as for several infinite families of alternating links. 

Note that despite the obvious similarities between Conjectures~\ref{conj.1}, \ref{conj.2}, and \ref{conj.3} and comparable statements for the Alexander polynomial, these conjectures are not straightforward generalizations of the well-known properties for $\Delta_L(t)$. None of the specializations from Equation~\eqref{LGspec} allow the properties of the Alexander polynomial on alternating links to be understood as consequences of the properties of the Links--Gould polynomial. This was pointed out in \cite{Kohli2} about the Ishii conjecture, and remains true here. This suggests that unlike the genus bounds and fiberedness criteria, these properties for alternating links are more likely the trace of a deep structural behavior common to both link invariants.

Let us point out that in our journey to understand whether the $\LG$ polynomial is more closely related to the Jones polynomial or to the Alexander polynomial, these new hypothetical properties of the Links-Gould polynomial reinforce its proximity with $\Delta_L(t)$. Indeed, the Jones polynomial of an alternating link is alternating \cite{Thi}, however, it is not always unimodal. Consider, for example,
$$
V_{8_{10}}(t) = -t^{-6} +2 t^{-5} -4 t^{-4} + 5 t^{-3} -4 t^{-2} +5 t^{-1} -3 + 2t -t^2.
$$

\subsection{A strengthening of the Fox-Stoimenow conjecture for the Alexander polynomial}

In recent work, Hafner, Mészáros, and Vidinas proved the Fox--Stoimenow conjecture for special alternating links \cite{Hafner} by showing that a well chosen normalization and homogenization of the Alexander polynomial is Lorentzian, a property introduced and studied by Br{\"a}nd{\'e}n and Huh \cite{branden2020lorentzian}. Observe that being Lorentzian is a very strong assumption and, as we will see, it is not satisfied by the $\LG$ invariant for any obvious choice of variables, not even for all \emph{special} alternating links. However, in the case of $\Delta_L(t)$, we show that for \emph{any} alternating link, the Stoimenow extension of the Fox conjecture is equivalent to the {divided power normalization of the} Alexander polynomial being Lorentzian. We refer to the Definitions \ref{defn:Lorentzian} and \ref{defn:Normalization} for precise definitions of the (divided power) normalization and Lorentzian property.

\begin{proposition}[Lorentzian property for Alexander] \label{conj.4}
The homogenized Alexander polynomial of an alternating link $\mathrm{Homog}(\Delta_L(-t))$ is denormalized Lorentzian if and only if the absolute values of the coefficients of the Alexander polynomial of the link form a log-concave sequence (with no internal zeros).
\end{proposition}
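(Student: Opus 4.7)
The proposition reduces, once unwrapped, to the classical characterization of denormalized Lorentzian polynomials in two variables. I would proceed in two short steps.

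\textbf{Step 1 (sign alternation).} By the theorems of Crowell \cite{Cro} and Murasugi \cite{Mur}, for an alternating link $L$ the coefficients $(a_k)$ of $\Delta_L(t) = \sum_k a_k t^k$ alternate in sign and have no internal zeros. Using the $\pm t^n$ indeterminacy of $\Delta_L$, we may assume $a_k = (-1)^k |a_k|$, so that $\Delta_L(-t) = \sum_k |a_k|\, t^k$ has nonnegative coefficients. It therefore suffices to prove that the bivariate homogeneous polynomial
$$
P(t,s) \;:=\; \mathrm{Homog}(\Delta_L(-t))(t,s) \;=\; \sum_{k=m}^{M} |a_k|\, t^k\, s^{M-k},
$$
of degree $d := M - m$, is denormalized Lorentzian if and only if the sequence $(|a_k|)_{k=m}^{M}$ is log-concave with no internal zeros.

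\textbf{Step 2 (Lorentzian binary forms).} One then invokes the classical characterization (see Br\"and\'en--Huh \cite{branden2020lorentzian}): a bivariate homogeneous polynomial $Q(t,s) = \sum_k b_k\, t^k s^{d-k}$ with nonnegative coefficients is Lorentzian if and only if its support is an interval in $\BZ$ and the renormalized sequence $\bigl(b_k/\binom{d}{k}\bigr)_k$ is log-concave (equivalently, $Q(t,1)$ is real-rooted, by Newton's inequalities). Applying this to the divided power normalization
$$
N(P)(t,s) \;=\; \sum_k \frac{|a_k|}{k!\,(d-k)!}\, t^k s^{d-k},
$$
one computes $\frac{|a_k|/(k!(d-k)!)}{\binom{d}{k}} = |a_k|/d!$. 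Hence $N(P)$ is Lorentzian if and only if $(|a_k|)$ is log-concave on its (interval) support, which is exactly the statement that $P$ is denormalized Lorentzian.

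\textbf{Main obstacle.} The proof is essentially a direct matching of definitions and carries no serious obstacle. The one point deserving attention is verifying that the paper's divided power normalization is the standard one dividing the coefficient of $t^k s^{d-k}$ by exactly $k!(d-k)!$; this guarantees the $d!$ factor above cancels cleanly and yields plain log-concavity of $(|a_k|)$ rather than an ultra-log-concave condition. The absence-of-internal-zeros part transfers transparently because the supports of $P$ and $N(P)$ coincide.
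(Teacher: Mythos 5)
Your proof is essentially correct and follows the same two-step structure the paper uses: (i) invoke Crowell--Murasugi to get sign alternation and absence of internal zeros, so that $\Delta_L(-t)$ has nonnegative coefficients, and (ii) reduce the denormalized-Lorentzian condition for a bivariate homogeneous polynomial to a log-concavity condition on the coefficients. Where you differ slightly is in step (ii): the paper states this reduction as a separate result (Proposition~\ref{algebraic lemma}) and \emph{proves} it by a direct Hessian computation, concluding that the divided-power-normalized Hessian $\begin{psmallmatrix} c_a & c_{a+1} \\ c_{a+1} & c_{a+2} \end{psmallmatrix}$ has at most one positive eigenvalue iff $c_a c_{a+2}\le c_{a+1}^2$; you instead \emph{invoke} the bivariate classification (Lorentzian iff ultra-log-concave with interval support) and convert between the binomial normalization $b_k/\binom{d}{k}$ and the paper's divided-power normalization $c_k/(k!(d-k)!)$, getting the same log-concavity condition on $(|a_k|)$ after the $d!$ cancels. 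That conversion is carried out correctly, and for the purposes of Proposition~\ref{conj.4} it is an acceptable shortcut since Proposition~\ref{algebraic lemma} is exactly the bivariate characterization in the paper's own normalization.

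One genuine error, though it is contained in a parenthetical aside and does not propagate into the argument: the claim that $Q(t,1)$ being real-rooted is ``equivalent'' to ultra-log-concavity ``by Newton's inequalities'' is false. Newton's inequalities give only the implication from real-rootedness to ultra-log-concavity; the converse fails already in degree $4$ (e.g.\ $b_k = (1-\epsilon,\,4,\,6,\,4,\,1)$ is ultra-log-concave, hence the corresponding bivariate form is Lorentzian, but $(1-\epsilon)+4t+6t^2+4t^3+t^4=(1+t)^4-\epsilon$ is not real-rooted). So Lorentzian in two variables is strictly weaker than real-rooted, and you should delete that parenthetical.

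Finally, a small point of completeness: the paper's proof of Proposition~\ref{conj.4} actually re-derives the Crowell--Murasugi sign-and-support statement by recalling Crowell's rooted-spanning-tree expansion of $\Delta_L$ and using it to exhibit $\Delta_L(t)$ (up to a unit) as $\sum_{k=0}^{v-f+1} (\#\{T:\mathrm{char}(T)=k\})\,(-t)^k$ with every count positive. Your version cites the same facts rather than re-deriving them, which is fine, but be aware the paper makes this step self-contained.
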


That the Alexander polynomial is denormalized Lorenztian follows from the structure of $\Delta_L(t)$ for $L$ an alternating link and from Proposition \ref{conj.4}, which is an application of the following elementary algebraic result.

\begin{proposition}\label{algebraic lemma}
For $P(t)$ a \emph{one-variable} polynomial with nonnegative real coefficients, its sequence of coefficients is log-concave if and only if $Q(t,z) := \mathrm{Homog}(P)$ is denormalized Lorentzian.
\end{proposition}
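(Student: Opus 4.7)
The plan is to verify the claim directly from the definition of denormalized Lorentzian polynomial: with only two variables, the combinatorial content collapses, and the divided power normalization is engineered precisely so that every factorial/binomial factor cancels, leaving behind the bare log-concavity inequality.

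First I will reduce the two pieces of the Lorentzian definition. Recall that a degree-$n$ homogeneous polynomial $f$ in two variables is Lorentzian if it has nonnegative coefficients, $M$-convex support, and (for $n \geq 2$) every iterated partial derivative $\partial_t^{k}\partial_z^{n-k-2} f$ with $0 \leq k \leq n-2$ is a quadratic form whose Hessian has at most one positive eigenvalue. The polynomial $Q$ is denormalized Lorentzian iff its normalization $N(Q) = \sum_i \tfrac{a_i}{i!\,(n-i)!}\, t^i z^{n-i}$ is Lorentzian. Two reductions make things transparent: (i) the $M$-convexity of the support of $N(Q)$, a subset of the antidiagonal $\{(i,n-i) : 0 \le i \le n\}$, is literally the ``no internal zeros'' condition on $(a_i)$, which is automatic from log-concavity of a nonnegative sequence; and (ii) for a quadratic $At^2 + Btz + Cz^2$ with $A,B,C \geq 0$, the Hessian $\bigl(\begin{smallmatrix} 2A & B \\ B & 2C \end{smallmatrix}\bigr)$ has automatically nonnegative trace, so ``at most one positive eigenvalue'' is equivalent to the single determinantal inequality $4AC \leq B^2$.

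Next I will perform the key computation. A direct differentiation shows, for each $k \in \{0, \dots, n-2\}$, that
$$
\partial_t^{k}\partial_z^{n-k-2} N(Q) \;=\; \sum_{i=k}^{k+2} \frac{a_i}{(i-k)!\,(k-i+2)!}\, t^{i-k} z^{k-i+2} \;=\; \tfrac{a_k}{2}\, z^2 \;+\; a_{k+1}\, tz \;+\; \tfrac{a_{k+2}}{2}\, t^2,
$$
since all $i!$ and $(n-i)!$ factors coming from the differentiation cancel against the normalization. The Hessian-eigenvalue condition $4A_k C_k - B_k^2 \leq 0$ then reads exactly
$$
a_k\, a_{k+2} \;-\; a_{k+1}^2 \;\leq\; 0,
$$
which is the log-concavity of $(a_i)$ at index $k+1$.

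Finally, letting $k$ range over $\{0,1,\ldots,n-2\}$ yields every log-concavity inequality for $(a_i)$, and combined with the support reduction this gives the equivalence in both directions. The degenerate cases $n \leq 1$ are immediate, since every nonnegative sequence of length at most two is log-concave and every nonnegative-coefficient homogeneous polynomial of degree at most one is Lorentzian. The only ``hard'' point is the routine bookkeeping of factorials in the derivative calculation — but this is exactly what the divided power normalization is designed to absorb, so no genuine obstacle arises.
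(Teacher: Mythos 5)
Your computation --- normalizing the homogenization, differentiating down to a quadratic form, and reducing the Lorentzian eigenvalue condition to $a_k a_{k+2} \leq a_{k+1}^2$ for $0 \leq k \leq n-2$ --- is exactly the paper's proof, and that part is correct. The problem is reduction (i), where you assert that the no-internal-zeros condition ``is automatic from log-concavity of a nonnegative sequence.'' This is false. The sequence $(1,0,0,1)$ is log-concave in the paper's sense (each instance of $a_{i-1}a_{i+1} \leq a_i^2$ reads $0\leq 0$ or $0\leq 1$), yet its support has a gap. Correspondingly, for $P(t) = 1 + t^3$ the normalization $\normal(\mathrm{Homog}(P)) = \tfrac{1}{6}t^3 + \tfrac{1}{6}z^3$ has support $\{(3,0),(0,3)\}$, which is not $\mathrm{M}$-convex: the exchange axiom applied to $\alpha=(3,0)$, $\beta=(0,3)$ with $i=1$ forces $j=2$ and would require $(2,1)$ to lie in the support, which it does not. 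So all the degree-two partials can have the right signature and the polynomial still fails to be Lorentzian, meaning the ``only if'' direction of the proposition as you have argued it does not hold without an extra hypothesis. Log-concavity only forbids an isolated internal zero (a single zero flanked by nonzeros), not a block of consecutive zeros.

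To be fair, the paper's own proof never discusses $\mathrm{M}$-convexity at all, so it shares this imprecision; the difference is that it silently omits the support check whereas you explicitly assert a false implication. The clean fix is the one already present in Proposition~\ref{conj.4}: add the hypothesis that the coefficient sequence has no internal zeros. With that hypothesis, $\mathrm{M}$-convexity of a subset of the antidiagonal $\{(i,n-i)\}$ is indeed equivalent to the support being an integer interval, reduction (i) becomes correct, and the rest of your argument matches the paper's exactly. For the intended application to Alexander polynomials of alternating links, Crowell's theorem supplies the no-internal-zeros condition, so nothing downstream is affected.
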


Thus, following Stoimenow \cite{Sto} the Alexander polynomial satisfies the Lorentzian property for all alternating knots with up to 16 crossings, and this is conjecturally true for all alternating links. \\

We conclude this introduction with a question.

\begin{question}
    Is the homogenized Links--Gould polynomial of alternating links denormalized Lorentzian after a proper change of variables?
\end{question}


\subsection{Organization of the paper} Section~\ref{sec:DefLG} contains the definition and elementary properties of the Links--Gould link polynomial. Some basic properties of log-concave sequences are presented in Section~\ref{sec:logconcave}, as well as the definition and characteristics of Lorentzian polynomials. In particular, log-concavity is related to unimodality and being a trapezoidal sequence. These properties will be useful to simplify the calculations carried out later in the paper. Propositions~\ref{conj.4} and \ref{algebraic lemma} are proved in Section~\ref{sec:Proof} and the implications for the Fox--Stoimenow conjecture are discussed. Section~\ref{sec:Evidence} presents experimental and computational evidence in support of Conjectures~\ref{conj.1}, \ref{conj.2} and \ref{conj.3}. Indeed, using a \texttt{Python} program \cite{data} that analyzes the data from the $\LG$-explorer \cite{LGexplorer}, we confirm that for all alternating knots available in the $\LG$-explorer database, these conjectures are true. This includes all alternating knots with up to 12 crossings, and a selection of 13 to 16 crossing alternating knots with small braid index. We also show that the Links--Gould polynomial of the trefoil is not denormalized Lorenztian for various choices of variables.  Moreover, we prove that all three conjectures are true for two infinite families of links: \texorpdfstring{$(2,n)$}{(2,n)}-torus links and twist knots. Their Links--Gould invariants are presented in the Figures of Section \ref{sec.values}.

\subsection*{Acknowledgements}
The authors express their gratitude to David de Wit for his work implementing the $\LG$-explorer, and to Jon Links for making it accessible on his webpage. MH was partially supported through the NSF-RTG grant
\#DMS-2135960. BMK was partially supported through the BJNSF grant IS24066. GT was supported by the BJNSF grant IS23005.

\section{The Links-Gould polynomial: definition and properties}\label{sec:DefLG}

In this section we closely follow the ideas exposed in \cite{GHKST, GHKKST}. For a more representation theoretic approach to the Links--Gould invariant see \cite{DWKL,GHKW}. 

\subsection{The Reshetikhin--Turaev functor}
\label{sub:RT}

We review briefly the well-known Reshetikhin--Turaev functor
~\cite{RT:ribbon,Tu:book} from tangles to tensor products of endomorphisms of a vector
space and its dual. 
  
A tangle diagram can be decomposed into a finite number of elementary
oriented tangle diagrams shown in Figure~\ref{fig:tangles1}. Moreover, up to
  isotopy, we may assume that for any horizontal line drawn on the diagram there is
  at most one critical point or a single crossing.

  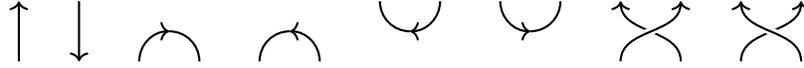
\begin{figure}[htpb!]
\begin{tikzpicture}[xscale=0.4, yscale =0.4]
\draw[->] (0,0) to (0,2);
\draw[->] (2,2) to (2,0);
\draw[->] (4,0) to [out=90, in=180] (5,1);
\draw[] (5,1) to [out=0, in=90] (6,0);
\draw[->] (10,0) to [out=90, in=0] (9,1);
\draw[] (9,1) to [out=180, in=90] (8,0);
\draw[->] (14,2) to [out=-90, in=0] (13,1);
\draw[] (13,1) to [out=180, in=-90] (12,2);
\draw[->] (16,2) to [out=-90, in=180] (17,1);
\draw[] (17,1) to [out=0, in=-90] (18,2);
\draw[->] (22,0) to [out=90, in=-90] (20,2);
\draw[line width=3pt, white] (20,0) to [out=90, in=-90] (22,2);
\draw[->] (20,0) to [out=90, in=-90] (22,2);
\draw[->] (24,0) to [out=90, in=-90] (26,2);
\draw[line width=3pt, white] (26,0) to [out=90, in=-90] (24,2);
\draw[->] (26,0) to [out=90, in=-90] (24,2);
\end{tikzpicture}
\caption{Elementary oriented tangles.}
\label{fig:tangles1}
\end{figure}

Set $V$ a vector space over a field $\BK$ of characteristic zero and consider
$V^\ast = \End(V,\BK)$ its dual. Each elementary oriented tangle is associated
to a linear map under the Reshetikhin-Turaev functor
$\RT$, as shown in Figure~\ref{fig:FIG3}. By composing these elementary maps, one obtains a linear map $\RT_\tangle$
for any oriented tangle diagram $\tangle$.
For example, if $\tangle$ is an oriented $(n,m)$-tangle diagram, then
$
\RT_{\tangle} \in \End(V^{\epsilon_1} \otimes V^{\epsilon_2} \otimes \ldots
\otimes V^{\epsilon_n}, V^{\delta_1} \otimes V^{\delta_2} \otimes \ldots
\otimes V^{\delta_m}) \,,
$
where $\epsilon_i$, $\delta_j$ are empty or $\ast$, depending on the
orientations of the boundary $\partial T$ of $T$.

\begin{figure}[htpb!]
\tikzset{every path/.style={thin}
}
\begin{tikzpicture}[xscale=.4, yscale=.4] 
\draw[thick,->] (0,0) to (0,2);
\node at (1,0) {$V$};
\node at (1,2) {$V$};
\draw[->] (1,.5) to (1,1.5);
\node[right] at (1,1) {$\mathrm{id}_V$};
\end{tikzpicture}   
\qquad
\begin{tikzpicture}[xscale=.4, yscale=.4]
\draw[thick,->] (7,0) to [out=90, in=-90] (5,2);
\draw[line width=3pt, white] (5,0) to [out=90, in=-90] (7,2);
\draw[thick,->] (5,0) to [out=90, in=-90] (7,2);
\node[below] at (9.5,.5) {$\phantom{W}V\otimes W\phantom{V}$};
\node[above] at (9.5,1.5) {$\phantom{V}W\otimes V\phantom{W}$};
\draw[->] (9.5,.5) to (9.5,1.5);
\node[right] at (9.5,1) {$R_{VW}$};
\end{tikzpicture}   
\qquad
\begin{tikzpicture}[xscale=.4, yscale=.4]
\draw[thick,->] (4,0) to [out=90, in=180] (5,1.5);
\draw[thick,] (5,1.5) to [out=0, in=90] (6,0);
\node[below] at (8,.5) {$\phantom{^*}V\otimes V^*$};
\node[above] at (8,1.5) {$\mathbb{K}$};
\draw[->] (8,.5) to (8,1.5);
\node[right] at (8,1) {$\rev_V$};
\end{tikzpicture}
\qquad 
\begin{tikzpicture}[xscale=.4, yscale=.4]
\draw[thick,->] (10,0) to [out=90, in=0] (9,1.5);
\draw[thick,] (9,1.5) to [out=180, in=90] (8,0);
\node[below] at (12.5,.5) {$V^*\otimes V\phantom{^*}$};
\node[above] at (12.5,1.5) {$\mathbb{K}$};
\draw[->] (12.5,.5) to (12.5,1.5);
\node[right] at (12.5,1) {$\ev_V$};
\end{tikzpicture}
\\[2em]
\begin{tikzpicture}[xscale=.4, yscale=.4]
\draw[thick,->] (0,2) to (0,0);
\node at (1.25,0) {$\phantom{^*}V^*$};
\node at (1.25,2) {$\phantom{^*}V^*$};
\draw[->] (1.25,.5) to (1.25,1.5);
\node[right] at (1.25,1) {$\mathrm{id}_{V^*}$};
\end{tikzpicture}   
\qquad
\begin{tikzpicture}[xscale=.4, yscale=.4]
\draw[thick,->] (5,0) to [out=90, in=-90] (7,2);
\draw[line width=3pt, white] (7,0) to [out=90, in=-90] (5,2);
\draw[thick,->] (7,0) to [out=90, in=-90] (5,2);
\node[below] at (9.5,.5) {$\phantom{V}W\otimes V\phantom{W}$};
\node[above] at (9.5,1.5) {$\phantom{W}V\otimes W\phantom{V}$};
\draw[->] (9.5,.5) to (9.5,1.5);
\node[right] at (9.5,1) {$R_{VW}^{-1}$};
\end{tikzpicture}   
\qquad
\begin{tikzpicture}[xscale=.4, yscale=.4]
\draw[thick,->] (6,2) to [out=-90, in=0] (5,.5);
\draw[thick,] (5,.5) to [out=180, in=-90] (4,2);
\node[above] at (8,1.5) {$\phantom{^*}V\otimes V^*$};
\node[below] at (8,.5) {$\mathbb{K}$};
\draw[->] (8,.5) to (8,1.5);
\node[right] at (8,1) {$\coev_V$};
\end{tikzpicture}
\qquad 
\begin{tikzpicture}[xscale=.4, yscale=.4]
\draw[thick,->] (4,2) to [out=-90, in=180] (5,.5);
\draw[thick,] (5,.5) to [out=0, in=-90] (6,2);
\node[above] at (8,1.5) {$V^*\otimes V\phantom{^*}$};
\node[below] at (8,.5) {$\mathbb{K}$};
\draw[->] (8,.5) to (8,1.5);
\node[right] at (8,1) {$\rcoev_V$};
\end{tikzpicture}
\caption{
  A graphical definition of the Reshetikhin-Turaev functor on oriented
  elementary tangle diagrams.}
\label{fig:FIG3}
\end{figure}
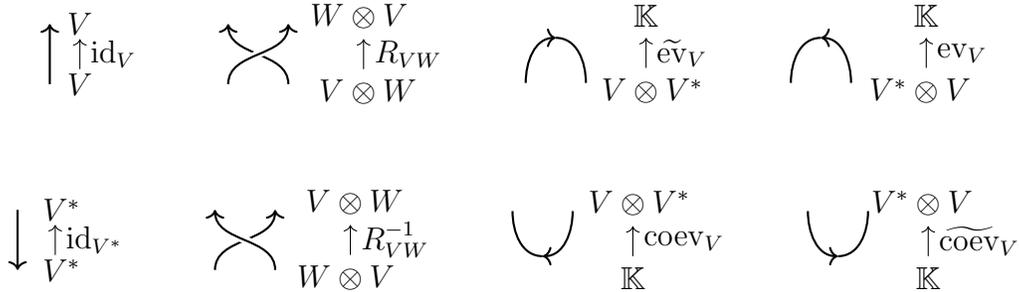

\subsection{Invariants of tangles}
\label{sub:tangles}

To state the identities of an enhanced $R$-matrix we need to fix some notation.
We do so following Ohtsuki~\cite{Ohtsuki}.
Fix a finite dimensional vector space $V$ with basis $(e_1, \ldots, e_n)$.
For $A \in \End(V \otimes V)$, we write $A = (A_{(i,j)}^{(k,l)})$ as a matrix in
basis $(e_i \otimes e_j)$. Therefore
$A_{(i,j)}^{(k,l)} = (e_k \otimes e_l)^{\ast}(A(e_i \otimes e_j))$. Then matrices $A^{\circlearrowleft}$ and $A^{\circlearrowright}$ are defined by setting: 
\[(A^\circlearrowleft)_{(k,i)}^{(l,j)} =A_{(i,j)}^{(k,l)}, \qquad
(A^\circlearrowright)_{(j,l)}^{(i,k)}=A_{(i,j)}^{(k,l)} \,.\]

\begin{definition}
\label{def.Rh}
An \emph{enhanced $R$-matrix} on a finite dimensional vector space $V$
is a pair of invertible endomorphisms $R \in \End(V\otimes V)$,
the \emph{$R$-matrix}, and $h \in \End(V)$, the \emph{enhancement}, satisfying
\begin{subequations}
\begin{align}
\label{eq1a}
R \circ (h \otimes h) &= (h \otimes h) \circ R \\
\label{eq1b}
\tr_{2} ((\mathrm{id}_V \otimes h) \circ R^{\pm 1}) &= \mathrm{id}_V \\
\label{eq1c}
(R^{-1})^{\circlearrowleft} \circ ((\mathrm{id}_V \otimes h)
\circ R \circ (h^{-1} \otimes \mathrm{id}_V))^{\circlearrowright} &=
\mathrm{id}_V \otimes \mathrm{id}_{V^{\ast}} \\
\label{eq1d}
(R \otimes \mathrm{id}_V) \circ (\mathrm{id}_V \otimes R)
\circ (R \otimes \mathrm{id}_V) &= (\mathrm{id}_V \otimes R)
\circ (R \otimes \mathrm{id}_V) \circ (\mathrm{id}_V \otimes R)
\end{align}
\end{subequations}
\end{definition}

The partial trace $\tr_2$ of $f \otimes g \in \End(V \otimes V)$
is defined by
\[
\tr_{2}(f \otimes g) = \tr(g) f \in \End(V)
\]
with the natural identification of $\End(V \otimes V)$ and
$\End(V) \otimes \End(V)$. {This extends to partial trace operations
$\tr_i:\End(V^{\otimes n})\to \End(V^{\otimes n-1})$ where we trace in the $i$-th
component. Let $\tr_{i_1i_2\cdots i_j}$ denote the composition of partial traces
$\tr_{i_1}\circ\tr_{i_2}\circ\cdots\circ\tr_{i_j}$.}

For $1\leq i <j \leq n$ and $f\in\End(V\otimes V)$ define
$(f)_{ij}\in\End(V^{\otimes n})$ which acts by $f$ in the $i$-th and $j$-th
tensor factors and is the identity otherwise.

Given an $R$-matrix, Equation \eqref{eq1d} defines a representation $\rho_R$ of
the braid group $B_n$ by mapping the elementary braid
generator in position $(k, k+1)$ to $(R)_{k,k+1}\in \End(V^{\otimes n})$.

\begin{remark}
\label{rem.cups}
An enhanced $R$-matrix determines an operator valued invariant of isotopy classes
of tangles under the Reshetikhin-Turaev functor if one considers the following definitions of
cup and cap maps:
\begin{subequations}
\begin{align*}
\rev_V(x \otimes f) & := f(h(x)) \\
\ev_V(f \otimes x) & := f(x) \\
\coev_V(1) & := \displaystyle\sum_{i} e_i^\ast \otimes e_i \\
\rcoev_V(1) & := \displaystyle\sum_{i} h^{-1}(e_i) \otimes e_i^\ast \,.
\end{align*}
\end{subequations}
\end{remark}

\subsection{Invariants of links}

Assuming this, an enhanced $R$-matrix on $V$ gives rise to an invariant $\RT$ of
closed links. However, for some enhanced $R$-matrices (e.g. for the $\LG$ invariant) $\RT$ is the zero invariant.
To obtain a nontrivial invariant, we consider tangles,
where an enhanced $R$-matrix gives an $\End(V)$-valued invariant $\RT_\tangle$ of a
$(1,1)$-tangle $\tangle$ and an $\End(V\otimes V)$-valued invariant $\RT_\tangle$
of a  $(2,2)$-tangle with upward oriented boundary, by which we mean a tangle
with two upward incoming and outgoing arcs and perhaps additional closed components.
To get to a scalar-valued invariant of links, we need to ensure: 

\begin{enumerate}
\item[(\namedlabel{item:P1}{$P_1$})]
  For every $(1,1)$-tangle $\tangle$, $\RT_\tangle$ is a scalar multiple of
  $\mathrm{id}_V$. 
\item[(\namedlabel{item:P2}{$P_2$})]
  For every $(2,2)$-tangle $\tangle$ defining a map $F_T\in\End(V^{\otimes 2})$
  with left and right closures $\tangle_1$ and $\tangle_2$, we have
  $\RT_{\tangle_1}=\RT_{\tangle_2} \in \End(V)$.
\end{enumerate}

\begin{figure}[htpb!]
\begin{tikzpicture}[baseline=10, xscale=1, yscale=1]
\node[rectangle,inner sep=3pt,draw] (c) at (1/2,1/2) {$\tangle$};
\draw[thick,->] ($(c.north east)+(-.1,0)$) to [out=90, in= 180]
($(c.north east)+(.4,1/2)$) to [out=0, in= 90] ($(c.east)+(.9,0)$);
\draw[thick] ($(c.east)+(.9,0)$) to [out=-90, in= 0] ($(c.south east)+(.4,-1/2)$)
to [out=180, in= -90] ($(c.south east)+(-.1,0)$);
\draw[thick] ($(c.south west)+(.1,-1/2)$) to ($(c.south west)+(.1,0)$);
\draw[thick,->] ($(c.north west)+(.1,0)$) to ($(c.north west)+(.1,1/2)$);
\node[] at ($(c)+(.25,-1.5)$) {$\tangle_1$};
\end{tikzpicture}
\qquad
\begin{tikzpicture}[baseline=10, xscale=1, yscale=1]
\node[rectangle,inner sep=3pt,draw] (c) at (1/2,1/2) {$\tangle$};
\draw[thick,->] ($(c.north west)+(.1,0)$) to [out=90, in= 0]
($(c.north west)+(-.4,1/2)$) to [out=180, in= 90] ($(c.west)+(-.9,0)$);
\draw[thick] ($(c.west)+(-.9,0)$) to [out=-90, in= 180] ($(-.4,-1/2)+(c.south west)$)
to [out=0, in= -90] ($(c.south west)+(.1,0)$);
\draw[thick] ($(c.south east)+(-.1,-1/2)$) to ($(c.south east)+(-.1,0)$);
\draw[thick,->] ($(c.north east)+(-.1,0)$) to ($(c.north east)+(-.1,1/2)$);
\node[] at ($(c)+(-.25,-1.5)$) {$\tangle_2$};
\end{tikzpicture}   
\caption{The left and right closures $\tangle_1$ and $\tangle_2$ of a $(2,2)$-tangle
  $\tangle$.}
\label{fig:T12}
\end{figure}
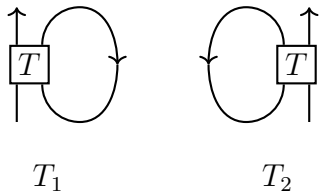

When \eqref{item:P1} is satisfied and $T$ is a $(1,1)$-tangle, we write
$F_T=\la F_T\ra \mathrm{id}_V$. Properties \eqref{item:P1} and \eqref{item:P2}
imply the existence of a scalar invariant of links $L$ obtained by cutting a link
along any component to obtain a $(1,1)$-tangle $L^\mathrm{cut}$. 
We denote this modified Reshetikhin-Turaev link invariant 
$\mRT_L=\la F_{L^\mathrm{cut}}\ra$.

\begin{remark}
When $\RT$ is determined by a ribbon category, Property \eqref{item:P1} is the
assumption that the coloring object $V$ is simple. Property \eqref{item:P2} is the
assumption that $V$ is an ambidextrous object in the sense of \cite{GPT}.
\end{remark} 

\begin{remark}
\label{rem.Lcut}
If $\beta\in B_n$ is a braid with closure $L$, then 
\begin{equation*}
    \begin{aligned}
\RT_{L^{\mathrm{cut}}}&=
\tr_{2,\dots, n}\left((\mathrm{id}_V\otimes
h^{\otimes (n-1)})\circ \rho_R(\b) \right)\in \End(V)
\\
\la\RT_{L^{\mathrm{cut}}}\ra
&=
\frac{1}{\dim(V)}
\tr\left((\mathrm{id}_V\otimes
h^{\otimes (n-1)})\circ \rho_R(\b)\right)\,.
\end{aligned}
\end{equation*}
\end{remark}

\subsection{The \texorpdfstring{$\LG$}{LG} invariant of links}
Note that in dealing with the Links--Gould invariant of links, we stick to
conventions used by Ishii for example in \cite{Ishii}. In doing so,
$\LG(L;p^{-2}, p^{2} q^{2})$ with $p = q^{\alpha}$ coincides with the Links-Gould
invariant from \cite{DWKL}.

Suppose $V$ is a vector space over a field $\BK$ with an ordered basis
$(v_1,\dots,v_n)$ and an $R$ matrix $R \in \End(V \otimes V)$.
Abbreviating $v_{ij}=v_i \otimes v_j$ for $i,j=1,\dots n$, the $n^2 \times n^2$
matrix $R$ can be presented
as an $n \times n$ matrix $\mathsf{R}:=(R(x_{ij}))_{1 \leq i,j \leq n}$ whose
entries are $\BK$-linear combinations of the $v_{ij}$.

The Links--Gould $R$-matrix is defined as follows.
Consider a 4-dimensional $\BQ(t_0,t_1)$-vector space $W$ with an ordered basis
$(w_1,w_2,w_3,w_4)$. Abbreviating $w_{ij}=w_i \otimes w_j$ for $i,j=1,\dots 4$, the $R$-matrix
$\mathsf{R}_{\LG}:=((R_{\LG})(w_{ij}))_{1 \leq i,j \leq 4}$ is given by
\begin{center}
\small
\resizebox{\textwidth}{!}{
$
  \mathsf{R}_{\LG}=\left(\begin{array}{@{}cccc}
  t_0 w_{11}
  &
  t_0^{1/2} w_{21}
  &
  t_0^{1/2} w_{31}
  &
   w_{41}
  \\ 
  t_0^{1/2} w_{12} + (t_0-1)w_{21}
  &
  - w_{22}
  &
  (t_0t_1-1) w_{23}
  -t_0^{1/2}t_1^{1/2} w_{32}
  -t_0^{1/2}t_1^{1/2}Y w_{41}
  &
  t_1^{1/2} w_{42}
  \\ 
  t_0^{1/2} w_{13} + (t_0-1)w_{31}
  &
  -t_0^{1/2}t_1^{1/2} w_{23}+Y w_{41}
  &
  - w_{33}
  &
  t_1^{1/2} w_{43}
  \\ 
  w_{14}-t_0^{1/2}t_1^{1/2}Y w_{23}+Y w_{32}+Y^2 w_{41}
  &
  t_1^{1/2} w_{24}+(t_1-1) w_{42}
  &
  t_1^{1/2} w_{34}+(t_1-1)w_{43}
  &
  t_1 w_{44}
  \end{array}\right)
$}
\end{center} 
with $Y = \sqrt{(t_0-1)(1-t_1)}$. 

\begin{remark}
    The entries in the above matrix are
in the quadratic extension $\BQ(t_0,t_1)[Y]$ of the field $\BQ(t_0,t_1)$. 
\end{remark}

Define also $h_{\LG}  =\diag(t_0^{-1}, -t_1, -t_0^{-1}, t_1) \in \End(W)$.

\begin{lemma}
$(R_{\LG},h_{\LG})$ is an enhanced
$R$-matrix and satisfies properties \eqref{item:P1} and \eqref{item:P2}.
\end{lemma}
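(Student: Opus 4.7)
The plan is to verify the four identities of Definition~\ref{def.Rh} in turn, and then establish the two link-theoretic properties \eqref{item:P1} and \eqref{item:P2}, relying in part on the representation-theoretic origin of $(R_{\LG},h_{\LG})$ from $\UqG$ as worked out in the references cited above.

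First I would dispose of the purely algebraic identities \eqref{eq1a}--\eqref{eq1d}. Equation \eqref{eq1a} is a weight argument: since $h_{\LG}$ is diagonal, one simply checks on the explicit matrix $\mathsf{R}_{\LG}$ that every nonzero entry $R_{(i,j)}^{(k,l)}$ satisfies $(h_{\LG})_{ii}(h_{\LG})_{jj}=(h_{\LG})_{kk}(h_{\LG})_{ll}$; this is visible row by row. Equation \eqref{eq1b}, the partial-trace (``quantum dimension'') condition, reduces after reading off the diagonal of $\mathsf{R}_{\LG}$ to the identities $t_0\cdot t_0^{-1}-1\cdot t_1+(-1)\cdot(-t_0^{-1})+1\cdot t_1 = 1$ on the $+$ side, and the analogous one for $R_{\LG}^{-1}$, which I would obtain by first inverting $\mathsf{R}_{\LG}$ block-wise using its $\mathbbm Z/2$-graded structure. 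Equation \eqref{eq1c} is the compatibility of $R_{\LG}$ with the cup and cap maps of Remark~\ref{rem.cups}; I would verify it column by column, using that $h_{\LG}^{-1}=\mathrm{diag}(t_0,-t_1^{-1},-t_0,t_1^{-1})$. The genuine obstacle is the Yang--Baxter relation \eqref{eq1d}, which is a $64\times 64$ identity: the honest route is a direct (computer-assisted) verification on basis vectors, but I would prefer to invoke the fact that $\mathsf{R}_{\LG}$ is the matrix of the universal $R$-matrix of $\UqG$ in the four-dimensional typical representation (see \cite{DWKL,GHKW}), for which the braid relation is automatic.

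Next I would turn to \eqref{item:P1}. The quickest argument is representation-theoretic: the coloring $\UqG$-module $W$ is simple (it is a typical four-dimensional irreducible), and the Reshetikhin--Turaev functor $\RT$ factors through the ribbon category of $\UqG$-modules; hence for any $(1,1)$-tangle $\tangle$ the endomorphism $\RT_\tangle\in\End_{\UqG}(W)$ is a scalar multiple of $\id_W$ by Schur's lemma. If one wishes to avoid invoking the full representation-theoretic machinery, the same conclusion follows once one knows that the braided algebra generated by $R_{\LG}$ together with the cups/caps acts on $W$ via scalars, which can be checked on one explicit nontrivial $(1,1)$-tangle whose invariant is a nonzero scalar (e.g.\ the closure of a single twist), combined with \eqref{eq1b}.

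Finally, property \eqref{item:P2} is the more delicate ``ambidexterity'' of $W$ in the sense of \cite{GPT}. Here I would quote the general principle that every simple projective object of a ribbon category is ambidextrous, applied to the typical module $W$ over $\UqG$. Concretely, one shows that the scalar $\langle\RT_{\tangle_i}\rangle$ is independent of whether the $i=1$ or $i=2$ closure is used, by verifying the identity $\tr_2((h\otimes\id)\circ f)=\tr_2((\id\otimes h)\circ f)$ for all $\UqG$-linear $f\in\End(W\otimes W)$, which in turn follows from the existence of a modified trace on the ideal of projectives of $\UqG$-mod. This is established in \cite{GHKST,GHKKST} in the same setting; the main obstacle if one wants a self-contained proof is to write down this modified trace explicitly, but given the references the verification is reduced to quoting known results.
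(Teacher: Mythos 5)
The paper itself gives no proof of this lemma: it is stated as a fact and the reader is implicitly referred to the representation-theoretic literature (\cite{DWKL,GHKW,GHKST,GHKKST}). Your outline is therefore not competing with a proof in the paper but with the cited references, and in that sense it is broadly on the right track: checking \eqref{eq1a}--\eqref{eq1c} directly on the explicit matrix, deferring \eqref{eq1d} to the universal $R$-matrix of $\UqG$, obtaining \eqref{item:P1} from simplicity of $W$ via Schur's lemma, and obtaining \eqref{item:P2} from ambidexterity/modified traces on the ideal of projectives is exactly the mechanism those references use.

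Two concrete issues. First, your sketch of \eqref{eq1b} is garbled: the claimed identity $t_0\cdot t_0^{-1}-1\cdot t_1+(-1)\cdot(-t_0^{-1})+1\cdot t_1 = 1$ evaluates to $1+t_0^{-1}\neq 1$, and more importantly the condition $\tr_2((\id_V\otimes h)\circ R^{\pm1})=\id_V$ is \emph{not} read off the diagonal of $\mathsf{R}_{\LG}$ alone: one must compute $\sum_j (h_{\LG})_{jj}\,R_{(i,j)}^{(k,j)}$ for all $(i,k)$, which involves the $w_{ij}$-coefficients of $R(w_{ij})$ for every $j$ (for example $R_{(2,1)}^{(2,1)}=t_0-1$, $R_{(2,3)}^{(2,3)}=t_0t_1-1$, $R_{(4,1)}^{(4,1)}=(t_0-1)(1-t_1)$ all enter), plus a separate check that the off-diagonal entries of the partial trace vanish. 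When done correctly the sums do collapse to $1$, but the claim that only the four entries $t_0,-1,-1,t_1$ are needed is false. Second, your proposed ``elementary alternative'' for \eqref{item:P1} — deducing the scalar property from checking a single $(1,1)$-tangle plus \eqref{eq1b} — does not work as stated: scalarity is a statement about the commutant of the tangle-operator algebra in $\End(W)$ and cannot be bootstrapped from one example; the Schur's lemma argument (or an explicit computation that $\End_{\UqG}(W)=\BK\,\id_W$) is genuinely needed. Neither of these affects the soundness of the overall strategy, which matches what the cited sources establish.
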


\begin{definition}
    The Links--Gould polynomial invariant of a link $L$ is defined from the previous enhanced $R$-matrix by setting 
    $
    \LG(L;t_0,t_1) = \la\RT_{L^{\mathrm{cut}}}\ra.
    $
\end{definition}

\begin{remark}
    Though it is not obvious at first glance, one can show that $\LG(L;t_0,t_1) \in \mathbb{Z}[t_0^{\pm 1} , t_1^{\pm 1}]$, see \cite[Theorem 1]{Ishii} and \cite[Theorem 1]{GHKKST}.
\end{remark}

The elementary properties of $\LG$ mirror features of the Alexander-Conway polynomial, some of which are listed below.

\begin{proposition}
The Links-Gould polynomial satisfies the following properties.
\begin{enumerate}
    \item $\LG(\bigcirc) = 1$\,.
    \item Denoting $L^{*}$ the reflection of $L$, we have
    \[
        \LG(L^{*}; t_{0}, t_{1}) = \LG(L; t_{0}^{-1}, t_{1}^{-1})\,.
    \]
    \item The symmetry
    \[
        \LG(L; t_{0}, t_{1}) = \LG(L; t_{1}, t_{0})
    \]
    holds. When taken together with the fact that link inversion switches the variables $t_0$ and $t_1$, it implies that $\LG$ does not detect inversion.
    \item For two links $L$ and $L'$, denoting their connected sum by $L \# L'$, we have
    \[
        \LG(L \# L') = \LG(L)\, \LG(L')\,.
    \]
    \item If $L = L' \sqcup L''$ is the split union of links $L'$ and $L''$, then
    \[
        \LG(L) = 0\,.
    \]
\end{enumerate}
\end{proposition}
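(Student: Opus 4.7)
The plan is to dispatch each of the five properties separately, using in each case either a general Reshetikhin--Turaev argument or a verification on the explicit matrices $\mathsf{R}_{\LG}$ and $h_{\LG}$.

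Properties (1), (4), and (5) are essentially formal. The once-cut unknot is the identity $(1,1)$-tangle, so $F_{\bigcirc^{\mathrm{cut}}}=\mathrm{id}_W$ and $\LG(\bigcirc)=1$. For the connected sum $L\#L'$, I would cut along the band realizing the connected sum; the resulting $(1,1)$-tangle is the vertical composition of $L^{\mathrm{cut}}$ and $(L')^{\mathrm{cut}}$, and by \eqref{item:P1} each acts as a scalar multiple of $\mathrm{id}_W$, so their composition yields $\LG(L)\LG(L')\,\mathrm{id}_W$. For (5), the key observation is the vanishing of the quantum dimension $\tr(h_{\LG})=t_0^{-1}-t_1-t_0^{-1}+t_1=0$: given $L=L'\sqcup L''$, cutting along a component of $L'$ leaves $L''$ as an unlinked closed component, so $F_{L^{\mathrm{cut}}}=\RT_{L''}\cdot F_{(L')^{\mathrm{cut}}}$ with $\RT_{L''}=\mRT_{L''}\cdot\tr(h_{\LG})=0$, forcing $\LG(L)=0$.

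Properties (2) and (3) reduce to symmetries of the pair $(\mathsf{R}_{\LG},h_{\LG})$. For the mirror identity (2), reversing every crossing replaces $R_{\LG}$ by $R_{\LG}^{-1}$ in the braid representation while leaving $h_{\LG}$ unchanged; an entry-by-entry check then gives $\mathsf{R}_{\LG}^{-1}(t_0,t_1)=\mathsf{R}_{\LG}(t_0^{-1},t_1^{-1})$, and $h_{\LG}^{-1}(t_0,t_1)=h_{\LG}(t_0^{-1},t_1^{-1})$ is transparent on the diagonal, so the trace formula of Remark~\ref{rem.Lcut} yields the identity. For (3), the strategy is to exhibit an intertwiner implementing the symmetry $t_0\leftrightarrow t_1$ on the enhanced $R$-matrix data: since $h_{\LG}(t_0,t_1)$ and $h_{\LG}(t_1,t_0)$ have disjoint spectra in $\BQ(t_0,t_1)$, no simple conjugation on $W$ suffices, so I would combine a basis permutation (essentially $w_1\leftrightarrow w_4$, $w_2\leftrightarrow w_3$) with the inversion $R_{\LG}\mapsto R_{\LG}^{-1}$, reflecting the Dynkin diagram automorphism of $U_q\gl(2|1)$ together with a self-duality of the fundamental representation; cyclicity of the trace then yields $\LG(L;t_0,t_1)=\LG(L;t_1,t_0)$.

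The main obstacle is property (3). Unlike (2), which is an eigenvalue-level identity $R_{\LG}^{-1}=R_{\LG}|_{t_i\mapsto t_i^{-1}}$, the symmetry $t_0\leftrightarrow t_1$ is not visible from a simple monomial substitution, and requires either pinpointing the correct combined intertwiner and checking its compatibility with every off-diagonal entry of $\mathsf{R}_{\LG}$---especially those involving $Y=\sqrt{(t_0-1)(1-t_1)}$---or deriving the symmetry indirectly through a $t_0\leftrightarrow t_1$-symmetric skein relation for $\LG$.
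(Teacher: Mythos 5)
The paper states this proposition without proof; these are standard facts about $\LG$ drawn from the literature (Ishii, De~Wit et al.), so there is no internal argument to compare against. Your dispatches of (1), (4), and (5) are correct: in particular the vanishing quantum dimension $\tr(h_{\LG}) = t_0^{-1} - t_1 - t_0^{-1} + t_1 = 0$ is exactly what kills split links.

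There is, however, a concrete gap in your argument for (2). The asserted entry-by-entry identity $\mathsf{R}_{\LG}^{-1}(t_0,t_1) = \mathsf{R}_{\LG}(t_0^{-1},t_1^{-1})$ is false. Restricting $R_{\LG}$ to the invariant plane spanned by $\{w_{12},w_{21}\}$ gives the block $\left(\begin{smallmatrix} 0 & t_0^{1/2} \\ t_0^{1/2} & t_0-1 \end{smallmatrix}\right)$, whose inverse is $\left(\begin{smallmatrix} t_0^{-1}-1 & t_0^{-1/2} \\ t_0^{-1/2} & 0 \end{smallmatrix}\right)$, whereas the substitution $t_0\mapsto t_0^{-1}$ produces $\left(\begin{smallmatrix} 0 & t_0^{-1/2} \\ t_0^{-1/2} & t_0^{-1}-1 \end{smallmatrix}\right)$; these differ. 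The correct relation is a unitarity identity $R_{\LG}^{-1}(t_0,t_1) = \tau\, R_{\LG}(t_0^{-1},t_1^{-1})\,\tau$, where $\tau(w_{ij})=w_{ji}$ is the tensor flip. Since $\tau$ does not factor as $\phi\otimes\phi$ for a single map $\phi$ on $W$, this conjugation does not commute naively with the partial traces and cup/cap maps of Remark~\ref{rem.Lcut}, and that bookkeeping is exactly what your sketch omits. Relatedly, mirroring a diagram leaves $h_{\LG}$ untouched while $t_i\mapsto t_i^{-1}$ changes it (indeed $h_{\LG}(t_0,t_1)\neq h_{\LG}(t_0^{-1},t_1^{-1})$), so the trace formulas do not line up by bare substitution; your observation $h_{\LG}^{-1}(t_0,t_1)=h_{\LG}(t_0^{-1},t_1^{-1})$ is correct but it is not explained how the inverse enters the computation. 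For (3) you acknowledge the argument is incomplete; the proposed intertwiner (the permutation $w_1\leftrightarrow w_4$, $w_2\leftrightarrow w_3$ combined with inversion, reflecting the diagram automorphism of $\gl(2|1)$) is the right idea, but none of the compatibility checks against $\mathsf{R}_{\LG}$ and the $Y$-terms are carried out.
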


A (rather complicated) complete set of skein relations for $\LG$ was recently presented in \cite{GHKKST}.

\section{Log-concavity, unimodality, Lorentzian polynomials: properties, examples, counter-examples}\label{sec:logconcave}

\subsection{Log-concavity, unimodality}

First we give some elementary properties of log-concave sequences that will be helpful in the next section. The following definitions, as mentioned in the introduction, are recalled below. \\

We say that a sequence $(a_{i})_{i \in \mathbb{Z}}$ is \emph{log-concave} if, for every \( i \in \mathbb{Z} \), we have: $$a_{i-1} \, a_{i+1} \leq a_i^2\,.$$
Likewise, we refer to a bidimensional sequence $(a_{ij})_{i,j \in \mathbb{Z}}$ as \emph{2d log-concave} if for every $i,j,k,l \in \mathbb{Z}$ :
   $$a_{i+k,j+l} \, a_{i-k,j-l} \leq a_{ij}^2\,.$$

\begin{proposition}
    For $(a_{i})_{i \in \mathbb{Z}}$ a log-concave sequence of non-negative real numbers with no internal zeros, the sequence $(a_i/a_{i-1})_{i \in \mathbb{Z}}$ is non-increasing.
\end{proposition}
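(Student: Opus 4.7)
The plan is to reduce the claim to a direct rearrangement of the log-concavity inequality, after unpacking the structure imposed by the no-internal-zeros hypothesis. First I would observe that, since $(a_i)$ is non-negative with no internal zeros, its support $S := \{i \in \mathbb{Z} : a_i > 0\}$ is either empty, all of $\mathbb{Z}$, a half-line, or a finite interval $\{m, m+1, \ldots, M\}$ of consecutive integers. The strategy is then to verify that $(a_i/a_{i-1})$ is non-increasing separately on the interior of $S$ and at its boundary.

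For three consecutive indices $i-1, i, i+1$ lying in $S$, the three terms $a_{i-1}, a_i, a_{i+1}$ are strictly positive, so dividing the log-concavity inequality $a_{i-1} a_{i+1} \leq a_i^2$ by $a_{i-1} a_i > 0$ yields
\[
\frac{a_{i+1}}{a_i} \;\leq\; \frac{a_i}{a_{i-1}},
\]
which is exactly the required inequality on the interior of $S$.

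It remains to handle the boundary of the support, for which I would adopt the natural conventions $c/0 = +\infty$ for $c > 0$ and read $0/0$ as part of a vanishing tail. If $m$ is a finite left endpoint of $S$, then $a_m/a_{m-1} = +\infty$ dominates every subsequent finite ratio; if $M$ is a finite right endpoint, then $a_{M+1}/a_M = 0$ is dominated by every preceding non-negative ratio, and every further ratio is $0/0$. The only real difficulty is this bookkeeping at the endpoints; crucially, the no-internal-zeros hypothesis is precisely what rules out the otherwise pathological configuration $a_{i-1}, a_{i+1} > 0$ with $a_i = 0$, in which the ratios would fail to be comparable. Putting the interior inequality together with these boundary observations gives the non-increasing property on all of $\mathbb{Z}$.
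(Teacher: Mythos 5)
Your core argument is exactly the intended one: for three consecutive indices in the support, all three terms are strictly positive, and dividing the log-concavity inequality $a_{i-1}a_{i+1}\le a_i^2$ by $a_{i-1}a_i>0$ gives $a_{i+1}/a_i\le a_i/a_{i-1}$. The paper in fact does not spell out a proof of this proposition at all; it is invoked in the proof of the subsequent corollary with the caveat ``on the interval where it is well-defined,'' which is effectively the same restriction to the support that you are working with.

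Where your write-up is slightly shakier is in trying to extend the ratio sequence to all of $\mathbb{Z}$ via conventions. The convention $c/0=+\infty$ for $c>0$ is fine, and $0/c=0$ for $c>0$ is just the true value, but at $0/0$ you cannot pick a single value: if the support is a finite interval $\{m,\dots,M\}$, the ratios for $i\le m-1$ and for $i\ge M+2$ are all $0/0$, and monotonicity would force $0/0\ge+\infty$ on the left tail and $0/0\le 0$ on the right tail, which no single convention satisfies. Your phrase ``read $0/0$ as part of a vanishing tail'' gestures at assigning it contextually, and that can be made to work, but it is cleaner (and is what the paper does) to just restrict the claim to the indices where the quotient is genuinely defined, i.e.\ $a_{i-1}>0$. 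This is a presentational point; the mathematical content of your proof is correct and matches the paper's.
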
   

\begin{corollary}\label{cor.logconcave}
    If $(a_{i})_{i \in \mathbb{Z}}$ is a log-concave sequence of non-negative real numbers with no internal zeros, then the sequence is trapezoidal and, in particular, it is unimodal.
\end{corollary}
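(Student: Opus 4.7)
The plan is to derive the trapezoidal shape directly from the preceding proposition, which asserts that the sequence of ratios $r_i := a_i / a_{i-1}$ is non-increasing on the support of $(a_i)$.

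First I would restrict attention to the support of $(a_i)$, which by the ``no internal zeros'' hypothesis is an interval $\{N, N+1, \ldots, M\} \subseteq \mathbb{Z}$ (allowing the boundary values $N = -\infty$ or $M = +\infty$ if the sequence is not compactly supported). On this interval, for each $N < i \leq M$ the ratio $r_i = a_i/a_{i-1}$ is a well-defined positive real number, and by the preceding proposition the finite (or one-sided) sequence $(r_i)$ is non-increasing.

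Then, since $(r_i)$ is a non-increasing sequence of positive reals, there exist extended indices $k \leq m$ in $\{N-1, N, \ldots, M\}$ such that $r_i > 1$ for $N < i \leq k$, $r_i = 1$ for $k < i \leq m$, and $r_i < 1$ for $m < i \leq M$, where any of the three regions is allowed to be empty. Translating via $a_i = r_i \, a_{i-1}$, this yields
\[
a_N < a_{N+1} < \cdots < a_k = a_{k+1} = \cdots = a_m > a_{m+1} > \cdots > a_M,
\]
which is exactly the trapezoidal form. Unimodality is then immediate since any trapezoidal sequence is unimodal by definition.

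I do not anticipate a real obstacle here: the nontrivial content was already packed into the preceding proposition, and what remains is a direct case analysis on the position of the ratios $r_i$ relative to $1$. The only minor points to keep in mind are the bookkeeping at the boundary of the support (so that we do not accidentally divide by zero) and the convention allowing $k$ or $m$ to be $N-1$ or $M$ when the ascending, constant, or descending regions are empty.
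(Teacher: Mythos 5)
Your proof is correct and follows essentially the same route as the paper: invoke the preceding proposition to get that the ratio sequence $r_i = a_i/a_{i-1}$ is non-increasing on the support, split the index set by where $r_i$ lies relative to $1$, and read off the trapezoidal shape. You are somewhat more explicit than the paper about restricting to the support interval and handling the boundary conventions, which is a harmless refinement.
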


\begin{proof}
   The sequence $(r_i) = (a_i/a_{i-1})$ is non-increasing on the interval where it is well-defined. So for some $-\infty \leq p \leq q \leq + \infty$ one can write: 
   $$
      r_i > 1  \iff i \leq p\,, \qquad \text{ and } \qquad r_i < 1  \iff i \geq q\,.
  $$ 
This precisely means that $(a_{i})$ is trapezoidal and therefore unimodal.
\end{proof}

\begin{remark}
    Corollary~\ref{cor.logconcave} is not true if one does not assume that the sequence does not have internal zeros. Indeed, a sequence can have multiple local maxima and still be log-concave, as long as the ``bumps'' are far apart enough. Consider for example:
    $$
    (a_{i}) = ( \ldots , 0, 1 , 0 , 0 , 1 , 0 , 0 , \ldots )\,.
    $$
\end{remark}

\begin{corollary}\label{generalizedlogconcavity}[Generalized log-concavity inequality]
    Suppose $(a_{i})_{i \in \mathbb{Z}}$ is a log-concave sequence of non-negative real numbers with no internal zeros. For all $i,k \in \mathbb{Z}$, we have:
    $$
    a_{i-k} \, a_{i+k} \leq a_i^2\,.
    $$
\end{corollary}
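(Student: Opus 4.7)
The plan is to reduce the inequality $a_{i-k} a_{i+k} \leq a_i^2$ to an iterated application of the fact, stated in the preceding proposition, that the ratio sequence $r_j := a_j/a_{j-1}$ is non-increasing on the support of $(a_i)$. By the symmetry $k \mapsto -k$ of the claim I may assume $k \geq 0$; the case $k = 0$ is trivial, so take $k \geq 1$.

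First I dispose of zeros. If either $a_{i-k}$ or $a_{i+k}$ vanishes then the left-hand side is $0$ and the inequality is immediate. Otherwise both are strictly positive, and the no-internal-zeros hypothesis forces the support of $(a_j)$ to be a discrete interval, so every $a_j$ with $i-k \leq j \leq i+k$ is strictly positive. In particular $a_i > 0$, and the ratios $r_{i-k+1}, \dots, r_{i+k}$ are all well-defined and finite.

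Next I exploit the telescoping identities
$$\frac{a_{i+k}}{a_i} = \prod_{j=1}^{k} r_{i+j}, \qquad \frac{a_i}{a_{i-k}} = \prod_{j=1}^{k} r_{i-k+j}.$$
Termwise, $i+j > i-k+j$ (since $k \geq 1$), so the monotonicity of $(r_j)$ yields $r_{i+j} \leq r_{i-k+j}$ for each $j = 1, \dots, k$. Multiplying these $k$ inequalities gives $a_{i+k}/a_i \leq a_i/a_{i-k}$, which rearranges to $a_{i-k} a_{i+k} \leq a_i^2$.

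There is essentially no obstacle: the argument is a two-line telescoping. The only subtlety worth flagging is that the no-internal-zeros hypothesis is strictly needed in order for all the ratios in the telescoping product to be well-defined; without it, one can build counterexamples of the same flavor as the one exhibited in the remark just before Corollary~\ref{generalizedlogconcavity}.
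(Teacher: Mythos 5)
Your proof is correct and takes essentially the same route as the paper: the paper packages the telescoping product as an auxiliary sequence $b_i = a_i/a_{i-k}$ (a product of $k$ non-increasing ratio sequences, hence itself non-increasing) and reads off $b_{i+k} \leq b_i$, whereas you compare the two products termwise directly. The two presentations are calculationally equivalent, and your handling of zeros and the reduction to $k \geq 1$ matches the paper's treatment.
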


\begin{proof}
   If any of the three terms is zero, then the inequality obviously holds, because the sequence has no internal zeros. So we can suppose that all three terms are nonzero. In that case, the inequality we wish to prove can be written as follows:
   $$
    \frac{ a_{i+k}}{a_i} \, \leq \frac{a_i}{a_{i-k}}\,.
    $$
But we know that the sequence $(a_i/a_{i-1})$ is non-increasing and thus:
$$
 b_i = \frac{ a_{i}}{a_{i-k}} = \frac{ a_{i}}{a_{i-1}} \times \frac{ a_{i-1}}{a_{i-2}} \times \ldots \times \frac{ a_{i-k+1}}{a_{i-k}}
$$
is the product of $k$ positive non-increasing sequences. So $(b_i)$ is a non-increasing sequence and we have:
\[b_{i+k} \leq b_i \,, \quad \text{ i.e. } \quad \frac{ a_{i+k}}{a_i} \, \leq \frac{a_i}{a_{i-k}}\,.\qedhere\]
\end{proof}

\subsection{Lorentzian polynomials}
We define the \emph{d-th discrete simplex} $\Delta_{m}^{d} \subseteq \mathbb{N}^{m}$ by
$$
\Delta_{m}^{d} = \lbrace{ \alpha \in \mathbb{N}^{m}~\vert~\sum_{i} \alpha_{i} = d \, \rbrace}\,.
$$
For any $1 \leq i \leq m$, we denote by $\epsilon_{i}$ the unit vector in the i-th coordinate.
\par
The notation $w^{\alpha}$ stands for $\prod\limits_{i=1}^{m} w_{i}^{\alpha_{i}}$ and $\partial^{\alpha}=\partial_{1}^{\alpha_{1}}\cdots \partial_{m}^{\alpha_{m}}$. We also introduce $\alpha!=\prod\limits_{i=1}^{m}\alpha_{i}!$.

\begin{definition}\label{defn:Lorentzian}
An $m$-variable homogeneous polynomial $P$ of degree $d$ with nonnegative real coefficients is \emph{strictly Lorentzian} if for any $\alpha \in \Delta_{m}^{d-2}$, $\partial^{\alpha} P$ is a nondegenerate quadratic form of signature $(1,m-1)$.
\par
\end{definition}

\begin{definition}
    A subset $\Sigma$ of $\mathbb{N}^{m}$ is \emph{$\mathrm{M}$-convex} if for any index $i$ and any $\alpha, \beta \in \Sigma$ whose $i$-th coordinates satisfy $\alpha_i > \beta_i$, there exists an index $j$ such that
    $$
    \alpha_j < \beta_j \, , \quad \alpha - \epsilon_i + \epsilon_j \in \Sigma \, , \quad \beta - \epsilon_j + \epsilon_i \in \Sigma \, .
    $$
\end{definition}

The set of degree $d$ homogeneous polynomials in $m$ variables with nonnegative coefficients and an $\mathrm{M}$-convex support is denoted by $\mathrm{M}_m^d$.

\begin{definition}
 A homogeneous real polynomial is \emph{Lorentzian} if it is the limit of strictly Lorentzian polynomials of the same degree and in the same number of variables.   
\end{definition}

Br{\"a}nd{\'e}n and Huh \cite{branden2020lorentzian} prove that in the space of degree $d$ homogeneous polynomials in $m$ variables with real coefficients, the set of Lorentzian polynomials $\mathrm{L}_m^d$ can be described completely: $\mathrm{L}_m^2$ is the set of degree $2$ homogeneous polynomials with nonnegative coefficients and at most one positive eigenvalue. For $d > 2$, we have
$$
\mathrm{L}_m^d = \{ P \in \mathrm{M}_m^d : \partial^{\alpha} P \in \mathrm{L}_m^2 \text{ for any $\alpha \in \Delta_{m}^{d-2}$}  \} \,.
$$

\begin{definition}\label{defn:Normalization}
Given a homogeneous polynomial of degree $d$ in $m$ variables $P=\sum\limits_{\alpha \in \Delta_{m}^{d}} c_{\alpha}w^{\alpha}$, we define its \emph{normalization} $\normal(P)$ by setting $$\normal(P) = \sum\limits_{\alpha \in \Delta_{m}^{d}} \frac{c_{\alpha}w^{\alpha}}{\alpha!}\,.$$
\end{definition}

As shown in Proposition 4.4 of \cite{branden2020lorentzian}, we have the following.

\begin{proposition}\label{prop:LorentzianImpliesUltraLogConcavity}
Given a homogeneous polynomial $P=\sum\limits_{\alpha \in \Delta_{m}^{d}} c_{\alpha}w^{\alpha}$, if $\normal(P)$ is Lorentzian, then the coefficients of $P$ satisfy
$c_{\alpha+\epsilon_{i}-\epsilon_{j}}c_{\alpha-\epsilon_{i}+\epsilon_{j}}\leq c_{\alpha}^{2} $.
\end{proposition}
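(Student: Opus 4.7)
The plan is to reduce the claim to the Br\"and\'en-Huh characterization of $\mathrm{L}_m^2$ applied to a single well-chosen partial derivative of $\normal(P)$.

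First I would dispose of the trivial cases: the inequality is vacuous if $i=j$ (both sides coincide), if $\alpha_i=0$ or $\alpha_j=0$ (one of the shifted multi-indices leaves $\mathbb{N}^m$, so the left-hand side vanishes by convention), or if $d \leq 1$. In the remaining case $d \geq 2$, $i \neq j$, $\alpha_i, \alpha_j \geq 1$, I would set $\gamma := \alpha - \epsilon_i - \epsilon_j \in \Delta_m^{d-2}$.

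The key step is to consider $Q := \partial^\gamma \normal(P)$, a degree-two homogeneous polynomial. By the Br\"and\'en-Huh description of $\mathrm{L}_m^d$ recalled just above the statement, $Q$ lies in $\mathrm{L}_m^2$: it has nonnegative coefficients and $\hessian(Q)$ has at most one positive eigenvalue. Using the identity $\partial^\gamma(w^\mu/\mu!) = w^{\mu-\gamma}/(\mu-\gamma)!$ (when $\mu \geq \gamma$ componentwise), a direct reading gives the coefficients of $w_i^2$, $w_j^2$, and $w_iw_j$ in $Q$ as $\tfrac12 c_{\alpha+\epsilon_i-\epsilon_j}$, $\tfrac12 c_{\alpha-\epsilon_i+\epsilon_j}$, and $c_\alpha$ respectively. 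Consequently the $(i,j)$-principal submatrix of $\hessian(Q)$ is
$$
M = \begin{pmatrix} c_{\alpha+\epsilon_i-\epsilon_j} & c_\alpha \\ c_\alpha & c_{\alpha-\epsilon_i+\epsilon_j} \end{pmatrix}.
$$

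To conclude, I would invoke Cauchy's interlacing theorem: $M$ inherits from $\hessian(Q)$ the bound of at most one positive eigenvalue. Since its diagonal entries are nonnegative, $\mathrm{tr}(M) \geq 0$, so $M$ cannot have two negative eigenvalues either, and the only remaining possibility is $\det M \leq 0$. This is exactly the claimed inequality. The only substantive computation is the coefficient bookkeeping for $\partial^\gamma \normal(P)$, which the divided-power normalization makes transparent by turning the normalizing factorials into unit weights at the critical monomials; no genuine obstacle is expected.
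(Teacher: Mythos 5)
The paper does not prove this statement; it is imported verbatim as Proposition~4.4 of Br\"and\'en--Huh, with no proof supplied. So the relevant question is simply whether your argument stands on its own, and it does. The reduction to $\gamma=\alpha-\epsilon_i-\epsilon_j\in\Delta_m^{d-2}$ is legitimate once the degenerate cases ($i=j$, $\alpha_i=0$ or $\alpha_j=0$, $d\leq 1$) are cleared, which you do; the coefficient bookkeeping $\partial^\gamma\normal(P)=\sum_{\beta\in\Delta_m^2}\frac{c_{\gamma+\beta}w^\beta}{\beta!}$ is exactly what the divided-power normalization is designed to make trivial, and it correctly produces Hessian entries $H_{ii}=c_{\alpha+\epsilon_i-\epsilon_j}$, $H_{jj}=c_{\alpha-\epsilon_i+\epsilon_j}$, $H_{ij}=c_\alpha$ at the relevant positions; Cauchy interlacing gives at most one positive eigenvalue for the $2\times 2$ principal submatrix $M$; and combined with $\tr(M)\geq 0$ (nonnegativity of coefficients being part of membership in $\mathrm{L}_m^2$) this forces $\det M\leq 0$, i.e.\ the desired inequality. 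This is the standard derivation of the discrete log-concavity consequence from the Lorentzian property, and to my knowledge it is essentially the argument in the cited source, so there is nothing substantive to contrast.
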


Let us state a few stability properties satisfied by Lorentzian polynomials.

\begin{proposition}[{\cite[Corollary~3.8]{branden2020lorentzian}}]\label{lorentzproduct}
Given two homogeneous polynomials $P$ and $Q$, if $\normal(P)$ and $\normal(Q)$ are Lorentzian, then $\normal(PQ)$ is also Lorentzian.
\end{proposition}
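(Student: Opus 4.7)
The plan is to reduce to the strictly Lorentzian case by continuity, then proceed by induction on the total degree $d := \deg(PQ)$. Since Lorentzian polynomials are defined as limits of strictly Lorentzian ones and polynomial multiplication is continuous, it suffices to establish the result when $\normal(P)$ and $\normal(Q)$ are strictly Lorentzian, and then pass to the limit. The proposition then reduces to checking two conditions for $\normal(PQ)$: first, that its support is $\mathrm{M}$-convex; second, that every partial derivative $\partial^\gamma \normal(PQ)$ with $|\gamma| = d - 2$ is a quadratic form with at most one positive eigenvalue.

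The $\mathrm{M}$-convexity of the support is the easier half. Since $\mathrm{Supp}(\normal(PQ))$ is the Minkowski sum $\mathrm{Supp}(\normal(P)) + \mathrm{Supp}(\normal(Q))$ and the Minkowski sum of $\mathrm{M}$-convex sets is $\mathrm{M}$-convex by a classical theorem of discrete convex analysis (see Br{\"a}nd{\'e}n--Huh or Murota), this part follows immediately once the reduction is in place.

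The signature condition is the core of the argument. A direct computation yields
\[
\partial^{\gamma} \normal(PQ) \;=\; \sum_{|\epsilon| = 2} \frac{(PQ)_{\gamma + \epsilon}}{\epsilon!}\, w^{\epsilon}
\qquad \text{for } |\gamma| = d-2,
\]
so the Hessian of $\partial^\gamma \normal(PQ)$ has entries determined by the coefficients $(PQ)_{\gamma + \epsilon}$. Expanding these via the Leibniz rule as convolutions $(PQ)_{\delta} = \sum_{\alpha + \beta = \delta} P_\alpha Q_\beta$, one combines the ultra-log-concavity inequalities (Proposition~\ref{prop:LorentzianImpliesUltraLogConcavity}) furnished by the Lorentzian hypothesis on $\normal(P)$ and $\normal(Q)$ with a Cauchy--Schwarz-type bilinear inequality to show that the number of positive eigenvalues is at most one. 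The base case $d = 2$ is a direct computation for a product of two (possibly constant) linear forms with nonnegative coefficients.

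The principal obstacle is that normalization does not commute with polynomial multiplication: in general $\normal(PQ) \ne \normal(P)\,\normal(Q)$, with the discrepancy captured by multinomial coefficients $\binom{\gamma}{\alpha}$. Passing to the divided-power basis $\{w^{(\alpha)} := w^\alpha/\alpha!\}$, where $\normal$ becomes simply a change of basis, makes the bookkeeping more symmetric, but the weighted Cauchy--Schwarz argument in the inductive step still requires careful tracking of these combinatorial factors. Handling this carefully is exactly what is done in the proof of Br{\"a}nd{\'e}n--Huh's Corollary~3.8.
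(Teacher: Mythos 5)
The paper does not prove this proposition: it is stated as a citation to Br\"and\'en--Huh \cite[Corollary~3.8]{branden2020lorentzian}, so there is no ``paper's own proof'' for you to match. Your proposal is a sketch of how one might try to reprove it from scratch, which is a reasonable exercise, but the sketch has a genuine gap at its center.

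The gap is the signature step. You assert that expanding $(PQ)_{\gamma+\epsilon}$ by Leibniz and ``combining the ultra-log-concavity inequalities \ldots{} with a Cauchy--Schwarz-type bilinear inequality'' forces the Hessian of $\partial^\gamma \normal(PQ)$ to have at most one positive eigenvalue. This is not carried out, and it is not clear it can be. First, ultra-log-concavity of coefficients (Proposition~\ref{prop:LorentzianImpliesUltraLogConcavity}) is a strictly weaker consequence of the Lorentzian hypothesis, not an equivalent reformulation; feeding only the ULC inequalities into the convolution does not recover the full Hodge--Riemann-type information you need. Second, pointwise inequalities of the type $H_{ii}H_{jj}\le H_{ij}^2$ among the Hessian entries do not bound the number of positive eigenvalues: a $4\times 4$ matrix with two decoupled off-diagonal $\begin{psmallmatrix}0&1\\1&0\end{psmallmatrix}$ blocks satisfies all such inequalities and has signature $(2,2)$. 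What rules this out in the Lorentzian setting is precisely the $\mathrm{M}$-convexity of the support interacting with the quadratic forms, and your sketch does not explain how that interaction enters. The Minkowski-sum observation for supports is fine (and does use that $P$ and $Q$ have nonnegative coefficients, so there is no cancellation), but it handles only the easy half.

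For the record, the actual Br\"and\'en--Huh argument does not run along your lines. They do not attack the Hessians of $\normal(PQ)$ directly; instead they develop the theory of linear operators preserving Lorentzian polynomials via the ``symbol'' of an operator, and Corollary~3.8 falls out of that machinery applied to the multiplication operator in the divided-power basis. If you want a self-contained proof rather than a citation, you would need either to reproduce that operator-theoretic argument or to supply the missing quadratic-form estimate explicitly; the present sketch defers exactly that to the reference, which is what the paper itself does more directly by simply citing it.
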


Note that the converse does not hold. For example, let $f = 6x^3 + 2xy^2+6ay^3$ with $\normal(f)=x^3+xy^2+ay^3$. Then $\normal(f^2)$ is Lorentzian, but for $a\in (-\frac{\sqrt{6}}{18}, 0)$, $\normal(f)$ is not Lorentzian.

\begin{proposition}\label{prop.LaurentShift}
    If $P(w_1,\dots, w_m)$ is a homogeneous polynomial and $w_k^{-1}P(w_1,\dots, w_m)$ is also a polynomial, then $\normal(P)$ is Lorentzian if and only if $\normal(w_k^{-1}P)$ is Lorentzian.
\end{proposition}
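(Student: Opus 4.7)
The plan is to reduce the equivalence to two standard closure properties of Lorentzian polynomials --- closure under partial derivatives and closure under products (Proposition~\ref{lorentzproduct}) --- via a direct algebraic identity relating the two normalizations in question. Specifically, I would first establish
$$
\normal(w_k^{-1}P) \;=\; \partial_k\,\normal(P)
$$
by writing $P = \sum_{\alpha} c_{\alpha}\, w^{\alpha}$, where by hypothesis $c_{\alpha} = 0$ whenever $\alpha_k = 0$, and comparing coefficients termwise using $\alpha_k/\alpha! = 1/(\alpha - \epsilon_k)!$.

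For the forward implication, I would invoke closure of Lorentzian polynomials under partial derivatives. By the characterization of $\mathrm{L}_m^d$ recalled just after Definition~\ref{defn:Normalization}, if $\normal(P) \in \mathrm{L}_m^d$ then $\partial_k\normal(P) \in \mathrm{L}_m^{d-1}$: any $(d-3)$-th order partial derivative of $\partial_k\normal(P)$ is a $(d-2)$-th order derivative of $\normal(P)$, which lies in $\mathrm{L}_m^2$ by assumption, and M-convexity of the support is preserved under the translation $\alpha \mapsto \alpha - \epsilon_k$ (well defined here since $P$ is divisible by $w_k$). Low-degree edge cases ($d \leq 2$) are handled directly from the definitions. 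Hence $\normal(w_k^{-1}P) = \partial_k \normal(P)$ is Lorentzian.

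For the backward implication, I would apply Proposition~\ref{lorentzproduct} to the factorization $P = w_k \cdot (w_k^{-1}P)$. Since $\normal(w_k) = w_k$ is Lorentzian (its support $\{\epsilon_k\}$ is trivially M-convex and the Lorentzian quadratic-form condition is vacuous in degree~$1$), and since $\normal(w_k^{-1}P)$ is Lorentzian by hypothesis, Proposition~\ref{lorentzproduct} immediately gives that $\normal(P) = \normal\!\big(w_k \cdot (w_k^{-1}P)\big)$ is Lorentzian.

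The only somewhat delicate point is the preservation of M-convexity of the support under the translation used in the forward direction; this is elementary (the exchange axiom only involves differences and is translation invariant on $\mathbb{N}^m$), but it is the one step that does not follow directly from the material recalled in the excerpt and would need a brief justification or reference to \cite{branden2020lorentzian}.
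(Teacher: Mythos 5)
Your proof is correct and takes essentially the same route as the paper: the reverse implication via Proposition~\ref{lorentzproduct} applied to the factorization $P = w_k\cdot(w_k^{-1}P)$, and the forward implication by observing that derivatives of $\normal(w_k^{-1}P)$ down to degree two match derivatives of $\normal(P)$ and hence inherit the signature condition. Your formulation is a bit cleaner in that you extract the identity $\normal(w_k^{-1}P) = \partial_k\normal(P)$ explicitly and then cite closure of Lorentzian polynomials under $\partial_k$, whereas the paper carries out the equivalent Hessian comparison directly; you are also more careful than the paper in pointing out that M-convexity of the support must be checked, which is indeed preserved here because divisibility by $w_k$ makes $\mathrm{Supp}(w_k^{-1}P)$ an honest translate of $\mathrm{Supp}(P)$ by $-\epsilon_k$.
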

\begin{proof}
    The reverse implication follows from Proposition \ref{lorentzproduct}. To prove the forward implication, assume $\normal(P)$ is Lorentzian and write $P=\sum\limits_{\alpha \in \Delta_{m}^{d}} c_{\alpha}w^{\alpha}$. Then 
    \[
    \normal(w_k^{-1}P)=
    \sum\limits_{\alpha \in \Delta_{m}^{d}} \frac{c_{\alpha}w^{\alpha-\e_k}}{(\a-\e_k) !}\,.
    \]
    By assumption $c_\a=0$ whenever $\a_k=0$, thus the above sum is well-defined. Equivalently, we assume that $\a\geq e_k$. Note that $\normal(w_k^{-1}P)$ is of degree $d-1$. Fix $\g\in \Delta_m^{d-3}$. Then 
    \[
    {\partial^\g}
    [\normal(w_k^{-1}P)]
    =
    \sum\limits_{\substack{\alpha \in \Delta_{m}^{d}\\\a\geq\g+\e_k}} \frac{c_{\alpha}w^{\alpha-\e_k-\g}}{(\a-\e_k-\g) !}
    =
    \sum\limits_{\substack{\b \in \Delta_{m}^{2}}} \frac{c_{\g+\e_k+\b}w^{\b}}{\b!}
    \]
    is a degree two polynomial. The matrix entries of the Hessian $\tilde H^\g$ of ${\partial^\g}
    [\normal(P)]$ are $(\tilde H^\g)_{ij}=c_{\g+\e_k+\e_i+\e_j}$.

    A similar computation shows that the Hessian $H^{\g+\e_k}$ of $\partial^{\g+\e_k}[\normal(P)]$ satisfies $H^{\g+\e_k}=\tilde H^\g$. Since $H^{\g+\e_k}$ has at most one positive eigenvalue, so does $\tilde H^\g$ and $\normal(w_k^{-1}P)$ is Lorenztian.
\end{proof}

In addition, the next result has been proved as Lemma~4.8 in \cite{branden2023lower}.

\begin{proposition}
    Set $P(w_1, \ldots,  w_m)$ a homogeneous polynomial and $Q(w_1, \ldots, w_{m-1}) := P(w_1, \ldots, w_{m-1}, w_{m-1})$. If $\normal(P)$ is Lorentzian, then so is $\normal(Q)$.
\end{proposition}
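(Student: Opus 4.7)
My plan is to proceed by induction on the degree $d$ of $P$. The inductive step will rely on Brändén--Huh's recursive characterization
\[
\mathrm{L}_{m-1}^d \;=\; \{\, f \in \mathrm{M}_{m-1}^d : \partial_i f \in \mathrm{L}_{m-1}^{d-1} \text{ for every } i\,\},
\]
which reduces the task to verifying $\mathrm{M}$-convexity of the support of $\normal(Q)$ and Lorentzianness of each first partial $\partial_i \normal(Q)$. The $\mathrm{M}$-convexity is immediate: $\text{supp}(Q)$ is the image of $\text{supp}(P)$ under the aggregation map $\pi:(a_1,\ldots,a_m)\mapsto (a_1,\ldots,a_{m-2},a_{m-1}+a_m)$, and such projections preserve $\mathrm{M}$-convexity (a standard fact in discrete convex analysis).

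For the base case $d=2$, I will compute the Hessians directly. A short bookkeeping yields
\[
H^Q \;=\; T^\top H^P\, T \;-\; N,
\]
where $T\in\mathbb{R}^{m\times (m-1)}_{\geq 0}$ is the matrix of the substitution $w_m\leftarrow w_{m-1}$, $H^P$ is the Hessian of $\normal(P)$, and $N = c_{e_{m-1}+e_m}\, E_{m-1,m-1}\succeq 0$ is a rank-one positive semidefinite correction reflecting the fact that the ``mixed'' coefficient of $w_{m-1}w_m$ in $P$ becomes part of a ``squared'' coefficient after identification. Since $H^P$ has at most one positive eigenvalue by hypothesis, Cauchy interlacing gives the same for the restricted form $T^\top H^P T$; subtracting a PSD matrix preserves this property, because $v^\top M v\leq 0$ on a hyperplane implies $v^\top(M-N)v\leq v^\top M v\leq 0$ on that same hyperplane.

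For the inductive step $(d>2)$, one first observes that $\partial_i\normal(Q)=\normal(Q^{(i)})$ for an explicit polynomial $Q^{(i)}$ of degree $d-1$ in $m-1$ variables. When $i\in\{1,\ldots,m-2\}$, a direct index computation identifies $Q^{(i)}=P^{[i]}|_{w_m=w_{m-1}}$, where $P^{[i]}$ is the polynomial of degree $d-1$ characterized by $\normal(P^{[i]})=\partial_i\normal(P)$. Since Brändén--Huh's characterization gives $\partial_i\normal(P)\in\mathrm{L}_m^{d-1}$, the inductive hypothesis applied to $P^{[i]}$ yields Lorentzianness of $\normal(Q^{(i)})$, and therefore of $\partial_i\normal(Q)$.

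The main obstacle I anticipate is the remaining case $i=m-1$: the natural chain rule $\partial_{m-1}Q=(\partial_{m-1}P+\partial_m P)|_{w_m=w_{m-1}}$ does not translate cleanly under $\normal$, because normalization does not commute with substitution. To handle this, I would introduce the perturbation $P_\epsilon(w):=P(w_1,\ldots,w_{m-1},w_{m-1}+\epsilon w_m)$ and establish that $\normal(P_\epsilon)$ remains Lorentzian for every $\epsilon>0$, via a Hessian analysis analogous to the base case (equivalently, by showing that denormalized Lorentzianness is preserved under the nonnegative linear substitution encoded by $B_\epsilon$). Letting $\epsilon\to 0$, the coefficients of $\normal(P_\epsilon)$ converge term-by-term to those of $\normal(Q)$, regarded as a polynomial in $m$ variables independent of $w_m$; the closedness of $\mathrm{L}_m^d$ then gives the required Lorentzianness, which descends from $m$ to $m-1$ variables since a polynomial independent of $w_m$ is Lorentzian in the ambient space if and only if it is Lorentzian after forgetting that variable.
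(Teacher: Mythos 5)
The paper does not prove this proposition itself --- it is invoked as Lemma~4.8 of the Br{\"a}nd{\'e}n reference cited immediately before the statement --- so your argument must stand on its own, and it has a genuine gap.

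Your base case $d=2$ is correct: the relation $H^Q = T^\top H^P\, T - N$ with $N = c_{e_{m-1}+e_m}E_{m-1,m-1}\succeq 0$, together with the observation that compressing a quadratic form and then subtracting a positive semidefinite term both preserve the ``at most one positive eigenvalue'' property, is a clean argument. The inductive step for $i\in\{1,\dots,m-2\}$, where you identify $\partial_i\normal(Q)$ with $\normal\bigl(P^{[i]}|_{w_m=w_{m-1}}\bigr)$ and apply the inductive hypothesis to $P^{[i]}$, also checks out. The gap is precisely where you flagged it, the derivative $\partial_{m-1}$, and the perturbation you propose does not close it. To conclude that $\normal(P_\epsilon)$ is Lorentzian for $\epsilon>0$ you would need, in your own words, ``that denormalized Lorentzianness is preserved under the nonnegative linear substitution encoded by $B_\epsilon$''; but that assertion is a strict generalization of the proposition being proved (the proposition is its $\epsilon=0$ specialization, and itself a special case of an arbitrary nonnegative substitution), so invoking it is circular. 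Nor does ``a Hessian analysis analogous to the base case'' supply the missing argument: for $d>2$ the Lorentzian property of $\normal(P_\epsilon)$ is not a single Hessian condition but is verified through the Br{\"a}nd{\'e}n--Huh recursion on the first derivatives $\partial_i\normal(P_\epsilon)$, and those derivatives hit the same $\partial_{m-1}$-type obstruction one degree lower, since $\normal$ fails to commute with the substitution $A_\epsilon$ just as it fails to commute with $A_0$. You have correctly located the hard step, but what you offer for it is a restatement of the difficulty rather than a resolution. A minor secondary point: the assertion that the aggregation map $\pi$ preserves M-convexity of the support is true, but it is itself a nontrivial fact (e.g.\ via the identification of M-convex sets with lattice points of generalized permutohedra) and should be cited or proved rather than dismissed as standard.
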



\section{Proof of Propositions~\ref{conj.4} and \ref{algebraic lemma}}\label{sec:Proof}

We recall some ideas developed by Crowell \cite{Cro}. Fix an alternating link $L$ with alternating diagram $D$. Because $D$ is alternating, coloring $S^2$ following the rule shown in Figure~\ref{fig:FIG4} produces a coherent checkerboard coloring of the plane. 

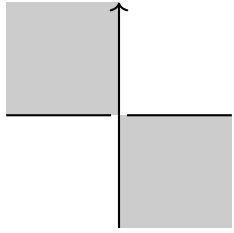
\begin{figure}[h!]
\begin{tikzpicture}[scale=1.5]
    \draw[] (-1,0) to (1,0);
    \draw[->, over] (0,-1) to (0,1);
    \path[fill=black,opacity=0.2] (-1,0) --  (0,0) -- (0,1) -- (-1,1) -- cycle;
    \path[fill=black,opacity=0.2] (1,0) --  (0,0) -- (0,-1) -- (1,-1) -- cycle;
\end{tikzpicture}
\caption{When you travel along the orientation of $D$ and go over a crossing, the north west and south east regions are colored in black.}\label{fig:FIG4}
\end{figure} 
Moreover, if one applies the transformation shown in Figure~\ref{fig:FIG5}, one defines an orientation $o$ on the four-valent graph underlying $D$ which differs from the orientation inherited from $L$. Denote this oriented graph by $P$ and let $w$ denote the map which assigns a weight to each edge of $P$, still following Figure~\ref{fig:FIG5}.

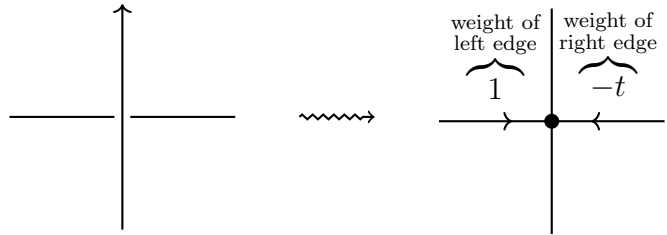
\begin{figure}[h!]
\[
\begin{tikzpicture}[scale=1.5]
    \draw[] (-1,0) to (1,0);
    \draw[->, over] (0,-1) to (0,1);
\end{tikzpicture}
\qquad\begin{tikzpicture}
    \draw[longleadsto,-to] (0,0) to (1,0);
\end{tikzpicture} \qquad
\begin{tikzpicture}[scale=1.5]
    \draw[point=.65] (-1,0) to (0,0);
    \draw[point=.65] (1,0) to (0,0);
    \draw[] (0,-1) to (0,1);
    \node[above] at (-.5,.1) {$\overbrace{1}^{\substack{\text{weight of} \\ \text{left edge}}}$};
    \node[above] at (.5,.1) {$\overbrace{-t}^{\substack{\text{weight of} \\ \text{right edge}}}$};
    \node[circle,fill,inner sep=2pt] at (0,0) {};
\end{tikzpicture}
\]
\caption{Definition of the orientation and edge weights of $P$.}\label{fig:FIG5}
\end{figure} 

For any vertex $i$ of $P$, we define $\mathrm{Tree}(i)$ to be the set of all rooted trees of $P$ with respect to the orientation $o$ and with origin $i$ containing all the vertices of $P$. Note that a \emph{rooted tree} is a subgraph that forms a tree and no two edges share the same terminal vertex.

\begin{theorem}[\cite{Cro}]
    For any $i$ vertex of $P$, the Alexander polynomial of an alternating link $L$ can be expressed as follows:
    $$
    \Delta_L(t) = \sum\limits_{T \in \mathrm{Tree}(i)} \left( \prod\limits_{e \, \mathrm{edge} \, \mathrm{of} \, T} w(e) \right).
    $$
\end{theorem}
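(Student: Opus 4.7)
The plan is to recognize the formula as an instance of the Matrix--Tree theorem (in its directed, weighted form due to Tutte) applied to the Fox--Jacobian of the Wirtinger presentation of $\pi_1(S^3 \setminus L)$. The idea is that, for an alternating diagram with the checkerboard coloring of Figure~\ref{fig:FIG4}, the Fox Jacobian can be massaged, after a change of basis, into precisely the reduced Laplacian of the weighted directed graph $P$; Tutte's theorem then converts the resulting determinant into the announced sum over rooted trees.

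First I would express $\Delta_L(t)$ as $\det M$, where $M$ is an $(n-1)\times(n-1)$ minor (obtained by deleting the row/column indexed by the chosen crossing $i$) of the Fox--Jacobian $(\partial r_c/\partial x_a)$ associated with the Wirtinger presentation: rows are indexed by crossings $c$ of $D$ and columns by arcs $a$ of $D$. At a crossing, the relation $x_k x_i x_k^{-1}x_j^{-1}=1$ yields a row supported in at most three columns, with entries in $\{\pm 1,\pm t\}$ in the specialization that sends all Wirtinger generators to the single Alexander variable. Next, I would use the alternating hypothesis and the checkerboard coloring to reinterpret this matrix in terms of the graph $P$: each arc of $D$ is naturally identified with an edge of $P$ (hence with an ordered pair of crossings under the orientation $o$ of Figure~\ref{fig:FIG5}), and a column-by-row reorganization converts $M$ into the reduced Laplacian of $P$,
\[
L_{jj}\;=\;\sum_{e\text{ out of }j}w(e),\qquad L_{jk}\;=\;-\!\!\sum_{e:\,j\to k}\!w(e),
\]
where the outgoing edges from any crossing contribute exactly one weight $1$ and one weight $-t$, matching the two nontrivial Fox derivatives at that crossing. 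The alternating condition is essential here: it is what guarantees that the checkerboard orientation rule of Figure~\ref{fig:FIG5} is globally coherent, so that the local ``$1$ vs.\ $-t$'' assignment at every crossing assembles into a genuine directed graph $P$ with a well-defined Laplacian.

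Finally, I would invoke the Matrix--Tree theorem for directed weighted graphs, which gives
\[
\det L^{(i)}\;=\;\sum_{T\in\mathrm{Tree}(i)}\;\prod_{e\in T} w(e),
\]
where $L^{(i)}$ denotes $L$ with row and column $i$ removed and the sum runs over spanning arborescences rooted at $i$ (i.e.\ spanning subgraphs in which each non-root vertex has in-degree one, which is exactly the rooted-tree condition stated in the excerpt). Combined with the identification $\Delta_L(t)=\det M=\det L^{(i)}$ from the previous step, this yields the theorem. The main obstacle is the middle step: matching the Fox Jacobian with a Laplacian requires a careful change of basis from arcs to crossings together with meticulous sign bookkeeping in the Fox derivatives, and one must verify that the alternating hypothesis produces at each vertex precisely the weight pattern $\{1,-t\}$ dictated by Figure~\ref{fig:FIG5}. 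For a non-alternating diagram, this coherent assignment breaks down and the Fox Jacobian is no longer a Laplacian, which explains why the theorem is specific to the alternating case.
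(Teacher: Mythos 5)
The paper does not give a proof of this statement; it cites it directly from Crowell \cite{Cro}, so there is no in-text argument for you to match. Your reconstruction does capture the correct high-level strategy — realize $\Delta_L(t)$ as the determinant of a minor of a presentation matrix, recognize that minor as a reduced weighted Laplacian of the oriented 4-valent graph $P$, and invoke the directed Matrix--Tree (Tutte) theorem to obtain the sum over rooted spanning trees. That is indeed the skeleton of the classical argument.

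There is, however, a concrete gap in your middle step. You index the columns of the Fox Jacobian by the Wirtinger generators, i.e. the maximal over-arcs of the diagram, and then claim that ``each arc of $D$ is naturally identified with an edge of $P$.'' But $P$ is defined in the paper as the 4-valent graph underlying the projection, so its edges are the segments between consecutive crossings; there are $2n$ of these, whereas the Wirtinger presentation has only $n$ over-arcs (one over-arc typically spans several consecutive segments). The two index sets are not in bijection — not even in cardinality — so the Wirtinger matrix cannot simply be ``reorganized column-by-column'' into the Laplacian of $P$. Relatedly, the Fox derivative of the overstrand generator is $1-t$, not an element of $\{\pm1,\pm t\}$ as you assert; this entry is exactly the would-be diagonal term of the Laplacian ($1 + (-t)$, the sum of the two outgoing weights in Figure~\ref{fig:FIG5}), which is further evidence that a genuine change of index set from arcs to crossings is required, and that change of index set is precisely where your identification breaks down. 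To repair the argument you would need a presentation of the Alexander module whose generators are in natural bijection with the edges of $P$ — for example a refinement of Wirtinger with a generator on every crossing-to-crossing segment, or Crowell's own chain-complex construction — and then verify that the alternating hypothesis together with the orientation rule of Figure~\ref{fig:FIG5} produces the stated in-/out-degree and weight pattern at each vertex. (One should also be careful about which of the two directed Laplacians, in-degree or out-degree, has cofactors enumerating the rooted trees as defined in the paper, where ``no two edges share a terminal vertex'' makes them arborescences oriented away from the root.) Once such a presentation is in place, the Matrix--Tree step is routine.
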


For $T$ a rooted tree, one can define $char(T)$ as the number of edges $e$ of $T$ that have weight $w(e) = -t$. In other words, 
$$
char(T) = \# (w^{-1}(-t) \cap T)\,.
$$

\begin{proposition}[\cite{Cro}] \label{maxdeg}
    For $L$ an alternating link, if $
    \Delta_L(t) = \sum\limits_{k = p}^{q} a_k \, t^k
    $ with $a_p , a_q \neq 0$, then $q-p = v-f+1$, where $v$ is the number of vertices and $f$ is the number of Seifert circles of $D$.
\end{proposition}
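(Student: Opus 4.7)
The plan is to extract the extremal exponents $q$ and $p$ directly from the state-sum formula stated just above, and then to identify the combinatorial quantity $q - p$ with the topological invariant $v - f + 1$.

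First, since every edge weight lies in $\{1,-t\}$ and every rooted spanning tree $T \in \mathrm{Tree}(i)$ has exactly $v-1$ edges, Crowell's formula may be rewritten as
\[
\Delta_L(t) \;=\; \sum_{T \in \mathrm{Tree}(i)} (-t)^{\mathrm{char}(T)} \;=\; \sum_{k \geq 0} (-1)^k c_k\, t^k,
\]
with $c_k := \#\{T \in \mathrm{Tree}(i) : \mathrm{char}(T) = k\}$. All trees contributing to the coefficient of $t^k$ carry the same sign $(-1)^k$, so no cancellation can occur and therefore
\[
q = \max_{T \in \mathrm{Tree}(i)} \mathrm{char}(T), \qquad p = \min_{T \in \mathrm{Tree}(i)} \mathrm{char}(T).
\]

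Next, I would evaluate these two extrema by a standard matroid-theoretic argument. Let $P_1$ and $P_{-t}$ denote the spanning subgraphs of $P$ obtained by keeping only the edges of weight $1$ and $-t$ respectively, and let $c_1, c_{-t}$ be their numbers of connected components. Any spanning tree of $P$ must contain at least $c_1 - 1$ weight-$(-t)$ edges in order to connect the components of $P_1$, and equality is realized by completing a spanning forest of $P_1$; the dual statement for $P_{-t}$ yields
\[
\min_{T}\mathrm{char}(T) = c_1 - 1, \qquad \max_{T}\mathrm{char}(T) = v - c_{-t},
\]
so that $q - p = v + 1 - (c_1 + c_{-t})$.

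Finally, I would establish the geometric identification $c_1 + c_{-t} = f$. Inspection of Figure~\ref{fig:FIG5} shows that at each vertex of $P$ the two weight-$1$ edges occupy one pair of opposite strands and the two weight-$(-t)$ edges occupy the other pair, so both $P_1$ and $P_{-t}$ are $2$-regular and their connected components are closed curves on $S^2$, each corresponding to a global resolution of every crossing of $D$. Using the alternating hypothesis together with the coherent checkerboard coloring of Figure~\ref{fig:FIG4}, a crossing-by-crossing analysis shows that the total number of curves produced by these two weight-preserving resolutions equals exactly the number $f$ of Seifert circles of $L$. Combining the three steps yields $q - p = v - f + 1$.

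The main obstacle is this last identification $c_1 + c_{-t} = f$: the first two steps are essentially formal, while matching the local weight data to the Seifert smoothing at every crossing is the step where the alternating assumption and the coherent checkerboard coloring are genuinely used. A further subtlety to address is that $\mathrm{Tree}(i)$ consists of arborescences rather than arbitrary spanning trees, but the min and max of $\mathrm{char}$ over the two sets coincide thanks to the specific shape of the orientation $o$ induced on $P$ by an alternating diagram.
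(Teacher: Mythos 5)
The paper itself offers no proof of this proposition --- it is cited to Crowell [\cite{Cro}] as a known result --- so there is nothing to compare your argument against; I will instead evaluate your outline on its own terms. Your Step 1 is fine: each arborescence in $\mathrm{Tree}(i)$ has $v-1$ edges weighted in $\{1,-t\}$, all contributions to $t^k$ carry the sign $(-1)^k$, so $q = \max\mathrm{char}$ and $p = \min\mathrm{char}$ with no possible cancellation. The real problems are in Steps 2 and 3, both of which you flag but neither of which you fill, and both of which are substantive rather than routine.

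In Step 2, the matroid exchange argument correctly yields $\min = c_1 - 1$ and $\max = v - c_{-t}$ over \emph{spanning trees} of $P$, but $\mathrm{Tree}(i)$ consists of arborescences with respect to $o$ rooted at a fixed vertex $i$, and these do not form the bases of a matroid. The issue is not cosmetic: in $P$ each vertex has in-degree exactly $2$ with one in-edge of each weight, so the unique subgraph of $P_1$ of size $v-1$ in which every non-root vertex has in-degree one is $P_1$ with the in-edge of $i$ removed, and this is an arborescence only if the cycle of the functional graph $P_1$ (assuming $c_1=1$) actually passes through $i$. Thus $\min_{T\in\mathrm{Tree}(i)}\mathrm{char}(T) = c_1 - 1$ is not automatic and genuinely needs the structure of $P$ coming from an alternating diagram; a one-sentence appeal to ``the specific shape of the orientation $o$'' does not establish it. In Step 3 the assertion that $P_1$ and $P_{-t}$ are $2$-regular does not follow from Figure~\ref{fig:FIG5}: that figure only forces the two \emph{in}-edges at each vertex to carry weights $1$ and $-t$, while the weights of the two \emph{out}-edges (the over-strand half-edges) are determined at their other endpoints, so in-degree $1$ in $P_1$ is immediate but out-degree $1$ is not. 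Without $2$-regularity the components of $P_1$ and $P_{-t}$ are not closed curves, and the proposed identification $c_1 + c_{-t} = f$ --- itself only sketched as ``a crossing-by-crossing analysis'' --- has no clear starting point. So the overall plan is reasonable in spirit (it is of the same flavor as Crowell's original argument via arborescences), but as written it rests on two unproven combinatorial claims, and the second of them begins from a premise ($2$-regularity) that needs its own nontrivial justification.
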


\begin{proposition}
    Up to an invertible element of $\mathbb{Z}[t^{\pm 1}]$, we have for any alternating link $L$:
    $$
    \Delta_L(t) \, \dot{=} \sum\limits_{k = 0}^{v-f+1} \# \{ T \in \mathrm{Tree}(i) : char(T) = k\} \, (-t)^k .
    $$
\end{proposition}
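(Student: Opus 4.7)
The plan is to apply Crowell's theorem and then collect terms according to the exponent of $-t$. Since every edge of the graph $P$ carries a weight equal to either $1$ or $-t$ by the construction in Figure~\ref{fig:FIG5}, for any rooted tree $T \in \mathrm{Tree}(i)$ (which has exactly $v-1$ edges) the product over its edges simplifies to
\[
\prod_{e \in T} w(e) \;=\; 1^{(v-1)-char(T)} \cdot (-t)^{char(T)} \;=\; (-t)^{char(T)}.
\]
Inserting this identity into Crowell's formula and grouping rooted trees by their $char$-value immediately yields
\[
\Delta_L(t) \;=\; \sum_{T \in \mathrm{Tree}(i)} (-t)^{char(T)} \;=\; \sum_{k \geq 0} N_k \, (-t)^k, \qquad N_k := \#\{T \in \mathrm{Tree}(i) : char(T) = k\}.
\]
As every rooted tree has only $v-1$ edges, the right-hand side is an honest polynomial in $\mathbb{Z}[t]$ with support contained in $[0,\,v-1]$.

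Next I would pin down the summation window to $\{0, \dots, v-f+1\}$ using Proposition~\ref{maxdeg}. Let $m$ and $M$ denote the minimum and maximum values of $char(T)$ attained over $T \in \mathrm{Tree}(i)$; the Crowell polynomial is supported in $[m, M]$ and has width $M - m$. Proposition~\ref{maxdeg} identifies this width as $v - f + 1$. Since $\Delta_L(t)$ is defined only up to invertible elements of $\mathbb{Z}[t^{\pm 1}]$, I may multiply by $(-t)^{-m}$ without loss, shifting the support into the window $[0, v-f+1]$ and producing the claimed equivalence up to a unit.

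The one genuine subtlety is the implicit assertion that after this monomial shift the coefficient at degree $k$ is still $N_k$, i.e.\ that $m = 0$. I would close this gap by exhibiting a rooted spanning tree of $P$ at $i$ consisting entirely of weight-$1$ (left) edges; its existence is guaranteed by the combinatorics of the alternating checkerboard coloring that governs the orientation and weighting rules of Figure~\ref{fig:FIG5}. This final verification is the main (and mild) obstacle of the proof; once it is in hand, the proposition follows by direct substitution into Crowell's formula combined with Proposition~\ref{maxdeg}.
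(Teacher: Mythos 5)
Your proof follows essentially the same route as the paper: apply Crowell's spanning-tree formula, group trees by their $char$-value, and invoke Proposition~\ref{maxdeg} for the length of the exponent window. The one subtlety you flag---that the minimum attained value of $char(T)$ must equal $0$ for the re-indexed sum to have the stated coefficients---is indeed implicit in the paper's $\dot{=}$ step; the paper settles it not by constructing an all-weight-$1$ spanning tree directly (as you propose, somewhat hand-wavily), but by citing Crowell's stronger result, quoted just after the proof, that $\{T \in \mathrm{Tree}(i) : char(T) = k\}$ is nonempty for every $k$ in the range $0 \leq k \leq v-f+1$, which in particular gives $N_0 > 0$.
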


\begin{proof} The proposition is a consequence of the following computation:
\begin{align*}
  \Delta_L(t)  & = \sum\limits_{T \in \mathrm{Tree}(i)} \left( \prod\limits_{e \, \mathrm{edge} \, \mathrm{of} \, T} w(e) \right) \\
  & =  \sum\limits_{k = p}^{q} \# \{ T \in \mathrm{Tree}(i) : char(T) = k\} \, (-t)^k \\
  & \, \, \dot{=} \sum\limits_{k = 0}^{q-p} \# \{ T \in \mathrm{Tree}(i) : char(T) = k\} \, (-t)^k \\
  & \, \, {=} \sum\limits_{k = 0}^{v-f+1} \# \{ T \in \mathrm{Tree}(i) : char(T) = k\} \, (-t)^k \text{ (see Proposition~\ref{maxdeg})}\,. \qedhere
\end{align*}    
\end{proof}
In addition, Crowell proves in \cite{Cro} that for $0 \leq k \leq v-f+1$, the set $\{ T \in \mathrm{Tree}(i) : char(T) = k\}$ is non empty. Therefore, the latter proves Proposition~\ref{conj.4} modulo Proposition~\ref{algebraic lemma}, and we now wish to prove Proposition~\ref{algebraic lemma}. Before that we recall some definitions.

\begin{definition}\label{def.homog}
    Let $P$ be an $m$-variable polynomial of degree $d$ $$ P(w_1, \ldots, w_{m-1}, w_m) = \sum\limits_{i_1, i_2, \ldots, i_m} c_{i_1, i_2, \ldots, i_m} \prod\limits_{k=1}^{m} w_{i_k}^{\alpha_{i_k}}\,.$$
    The \emph{homogenization of $P$} is the homogeneous polynomial of degree $d$
    $$\mathrm{Homog}(P)(w_1, \ldots, w_{m-1}, w_m, z) = \sum\limits_{i_1, i_2, \ldots, i_m} c_{i_1, i_2, \ldots, i_m} \left( \prod\limits_{k=1}^{m} w_{i_k}^{\alpha_{i_k}} \right) z^{d-\alpha_{i_1} - \ldots - \alpha_{i_m}}.$$
\end{definition}

\begin{definition}
    A homogeneous polynomial $P$ is called \emph{denormalized Lorentzian} if $\normal(P)$ is Lorentzian.
\end{definition}

\begin{proof}[Proof of Proposition~\ref{algebraic lemma}]
We write 
$$
P(t) = \sum\limits_{k = 0}^{d} c_k\, t^k \,, \quad x_k \geq 0 \,, \quad x_d \neq 0 \,.
$$ 
To translate the Lorentzian condition in terms of coefficients of the polynomial, we can now compute the partial derivatives of $$Q(t,z) = \normal\circ\mathrm{Homog}(P) = \sum\limits_{k = 0}^{d} \frac{c_k\, t^k z^{d-k}}{k!(d-k)!}  \, .$$
Set $a,b \in \mathbb{Z}$ such that $0 \leq a \leq d-2$ and $a + b = d-2$. Then
\begin{align*}
 \frac{\partial^a Q}{\partial t^a} (t,z)   
  & =  \sum\limits_{k = a}^{d} \frac{c_k}{(k-a)!(d-k)!} \, t^{k-a} z^{d-k}  
\end{align*}
and
\begin{align*}
 \frac{\partial^{a+b} Q}{\partial z^b \partial t^a} (t,z)   
  & =  \sum\limits_{k = a}^{a+2} \frac{c_k}{(k-a)!(a+2-k)!} \, t^{k-a} z^{a+2-k}
  = \frac{c_a}{2} z^2 + c_{a+1} t z + \frac{c_{a+2}}{2} t^2 \,.
\end{align*}
The Hessian matrix for $\frac{\partial^{a+b} Q}{\partial z^b \partial t^a} (t,z)$ relative to the derivatives $\partial z, \partial t$ is:
\[
\begin{pmatrix}
    c_a                          & c_{a+1} \\
 c_{a+1}                      & c_{a+2}
\end{pmatrix}
\]
with characteristic polynomial and discriminant
\begin{align*}
 \chi_a(\lambda)   
   =  \lambda^2 - (c_a + c_{a+2}) \lambda + c_a c_{a+2} - c_{a+1}^2 
   \qquad 
   \mbox{and}
   \qquad 
   \Delta_a = (c_a - c_{a+2})^2 + 4 c_{a+1}^2 \geq 0 \,.
\end{align*}
The Lorentzian property for $Q(t,z)$ means that the smaller root of $\chi_a$ is nonpositive. In other words, $c_a + c_{a+2} - \sqrt{\Delta_a} \leq 0$ for any $0 \leq a \leq d-2$. Since all the $c_i$ are nonnegative, this last inequality is equivalent to $c_a c_{a+2} \leq c_{a+1}^2$, which exactly means that the sequence $(c_i)$ of coefficients of $P$ is log-concave. This proves Proposition~\ref{algebraic lemma}.
\end{proof}

Proposition~\ref{conj.4} shines a new light on the Fox-Stoimenow conjecture. Indeed, it suggests that a reasonable strategy to attempt to prove this conjecture in full generality is to further pursue the Hafner--M\'esz\'aros--Vidinas approach \cite{Hafner}. From Proposition \ref{prop.LaurentShift}, the Lorentzian property of the denormalized Alexander polynomial is independent of its presentation as an honest polynomial.

Moreover, assuming that the Fox--Stoimenow conjecture is true, Proposition~\ref{lorentzproduct} implies that the homogenization of the square of the Alexander polynomial $\mathrm{Homog}(\Delta_L(-t)^2)$ for an alternating link $L$ is also denormalized Lorentzian, and in turn its sequence of coefficients is log-concave, see Proposition~\ref{prop:LorentzianImpliesUltraLogConcavity}. Recall that the square of the Alexander polynomial is a specialization of the Links--Gould invariant of links, see Equation~\eqref{LGspec}. This observation suggests that some form of the Fox--Stoimenow conjecture lifts to $\LG$, which we formulate in Conjectures~\ref{conj.1}, \ref{conj.2}, and \ref{conj.3}. 

Regarding the Lorentzian property for $\LG$, note that it is sensitive to the choice of variables used to write the polynomial. Although the tests we carried out have failed for the obvious choices of variables, see Subsection \ref{subs.noLG}, it is our prediction that $\LG$ is Lorentzian for some choice of variables yet to be determined. Indeed, the 2d log-concavity property that was tested successfully shows that $\LG$ displays a very structured behavior on alternating links.

\section{Evidence supporting Conjectures~\ref{conj.1}, \ref{conj.2} and \ref{conj.3}}\label{sec:Evidence}

\subsection{The conjectures hold for all knots from the LG-explorer} We wrote a \texttt{Python} program that analyzes the data from De Wit's $\LG$-explorer \cite{LGexplorer} to show that conjectures~\ref{conj.1}, \ref{conj.2} and \ref{conj.3} are true for all prime knots with up to 12 crossings and a selection of prime knots with 13 to 16 crossings and a small braid index, for which the value of the Links-Gould polynomial is available in the $\LG$-explorer database. The results of the computations and verifications as well as the computer program are available online \cite{data}. See Figure~\ref{fig:FIG6} for an example of a graphical representation of the data.

\begin{figure}[hb!]
\includegraphics[scale=.3]{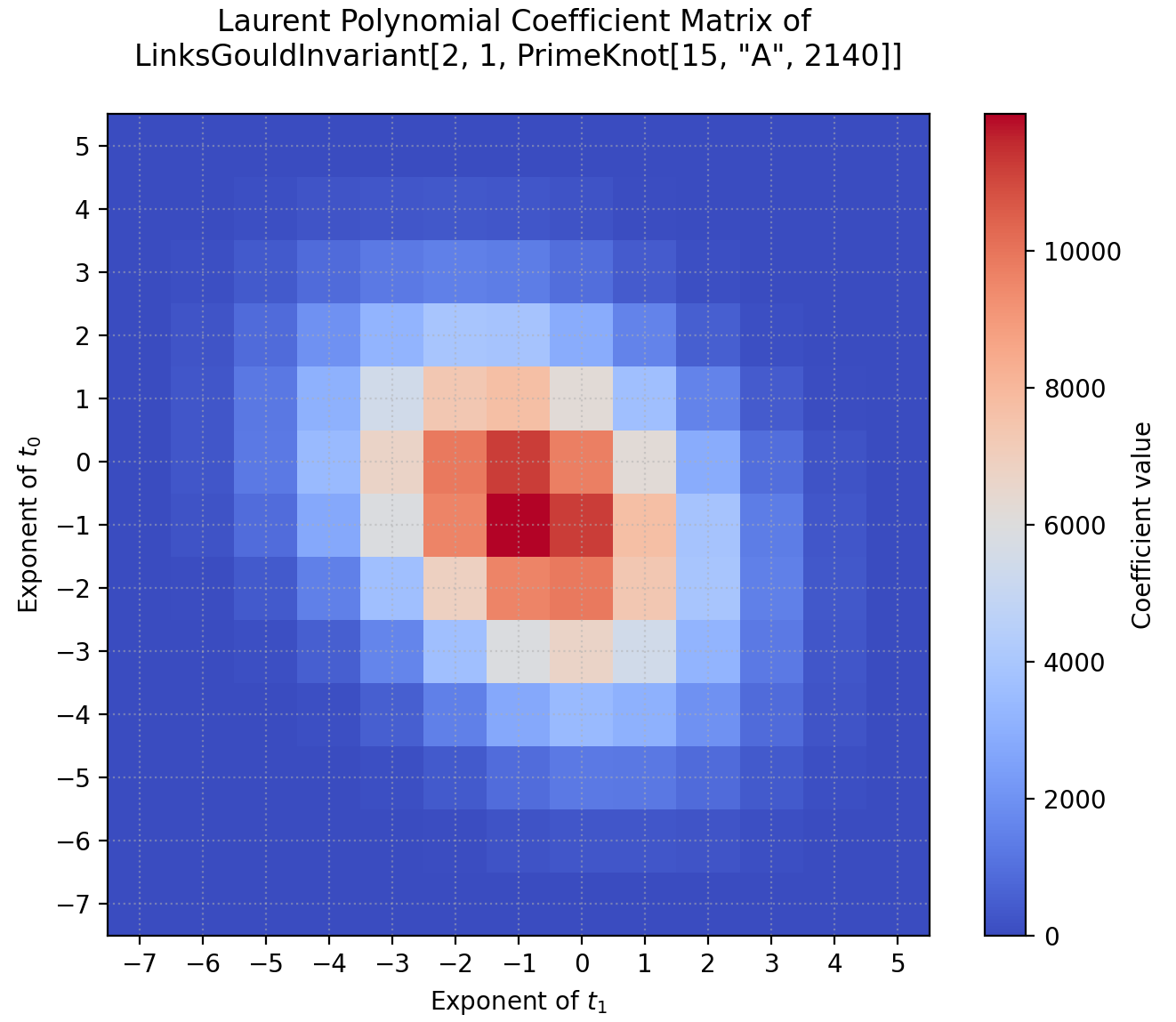}

\caption{A graphical representation of the absolute values of the coefficients of the $\LG$ polynomial of the alternating knot $15_{2140}^a$. The ``mountain'' formed by the coefficients appears clearly.}\label{fig:FIG6}
\end{figure} 

Note that there is an inconsistency in the $\LG$-explorer database between knots with at most 12 crossings and those with more than 12 crossings. Indeed, once you make the usual substitution 
$$
(t_0, t_1)= (q^{-2\a}, q^{2\a + 2})
$$
the polynomials from the dataset are $\LG(K; t_{0}, t_{1})$ when $K$ has at least 13 crossings, but for knots with 12 crossings or less the knot polynomials are $\LG(K; t_{0}^{-1}, t_{1}^{-1})$.

\subsection{The conjectures hold for all \texorpdfstring{$(2,n)$}{(2,n)}-torus links} Setting $n \in \mathbb{Z}_{n\geq 1}$, the \texorpdfstring{$(2,n)$}{(2,n)}-torus link $T(2,n)$ is the closure of the braid $b_n = \sigma_1^n \in B_2$. The \texorpdfstring{$(2,3)$}{(2,3)}-torus link $T(2,3)$ is represented in Figure~\ref{fig:FIG7}. When $n$ is odd, $T(2,n)$ is a knot; when $n$ is even, $T(2,n)$ is a link with two components. In both cases, $T(2,n)$ is alternating. 

\begin{figure}[h!]
\begin{tikzpicture}
    \foreach \x in {0,1,2}
    {
    \draw[->] (1,\x) to[out=90,in=-90] (0,\x+1);
    \draw[over,->] (0,\x) to[out=90,in=-90] (1,\x+1);
    }
    \draw (1,3) arc(180:0:.5);
    \draw (0,3) arc(0:180:.5);
    \draw (0,0) arc(0:-180:.5);
    \draw (1,0) arc(-180:0:.5);
    \draw (-1,0) to (-1,3);
    \draw (2,0) to (2,3);
\end{tikzpicture}
\caption{Torus knot $T(2,3)$ is alternating.}\label{fig:FIG7}
\end{figure}

\begin{lemma}\label{valuetorus}
    For $n \in \mathbb{Z}_{n\geq 1}$:
    $$
    \LG(T(2,n)) = \red{(-1)^{n-1}} + \sum\limits_{k = 2}^{n} \left( \green{t_1^{n-k} t_0 \sum\limits_{l = 0}^{k-2} t_0^l (-1)^{k-2-l}} - \blue{t_1^{n-k+1} \sum\limits_{m = 0}^{k-1} t_0^m (-1)^{k-1-m}} \right)
    $$
\end{lemma}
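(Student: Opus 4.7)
The plan is to compute $\LG(T(2,n))$ directly from Remark~\ref{rem.Lcut} applied to the braid $\sigma_1^n$:
$$\LG(T(2,n)) = \tfrac14\, \tr_{W \otimes W}\!\bigl((\id_W \otimes h_{\LG})\, R_{\LG}^n\bigr).$$
The key observation is that on $W \otimes W$ the $R$-matrix $R_{\LG}$ admits only three distinct eigenvalues, namely $t_0$, $t_1$, and $-1$. Granting this, the spectral decomposition of $R_{\LG}^n$ gives
$$\tr_{W \otimes W}\!\bigl((\id \otimes h_{\LG})\, R_{\LG}^n\bigr) = A\, t_0^n + B\, t_1^n + C\, (-1)^n$$
for scalars $A, B, C \in \mathbb{Q}(t_0, t_1)$ independent of $n$, and the proof reduces to pinning down these three constants and matching the resulting expression to the stated formula.

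The main technical step is the spectral analysis of $R_{\LG}$. Direct inspection of the matrix $\mathsf{R}_{\LG}$ shows that $W \otimes W$ decomposes into $R_{\LG}$-invariant subspaces: four $1$-dimensional eigenspaces $\mathrm{span}(w_{ii})$ with eigenvalues $t_0, -1, -1, t_1$; four $2$-dimensional subspaces $\mathrm{span}(w_{ij}, w_{ji})$ for $\{i,j\} \in \{\{1,2\},\{1,3\},\{2,4\},\{3,4\}\}$, each of which diagonalizes over $\{t_0,-1\}$ or $\{t_1,-1\}$; and a remaining $4$-dimensional subspace $U := \mathrm{span}(w_{14}, w_{23}, w_{32}, w_{41})$. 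For $R_{\LG}$ restricted to $U$ one computes (using $Y^2 = (t_0-1)(1-t_1)$) trace $t_0 + t_1 - 2$ and determinant $t_0 t_1$, and the system $R_{\LG} v = -v$ for $v \in U$ is easily seen to have a $2$-dimensional solution space. The two remaining eigenvalues on $U$ must then sum to $t_0 + t_1$ and multiply to $t_0 t_1$, so they equal $t_0$ and $t_1$.

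Once the spectrum is known, the constants $A, B, C$ are pinned down by three evaluations. At $n=0$, the trace equals $4\,\tr(h_{\LG}) = 0$ (a direct check using $h_{\LG} = \diag(t_0^{-1}, -t_1, -t_0^{-1}, t_1)$). At $n=1$, the invariant is that of the unknot, giving $4$. At $n=2$, the invariant is that of the Hopf link; a direct trace computation on the same block decomposition yields $-4(1-t_0)(1-t_1)$. Solving the resulting $3 \times 3$ linear system produces
$$A = \tfrac{4 t_0 (1 - t_1)}{(t_0 + 1)(t_0 - t_1)}, \quad B = \tfrac{4 t_1 (t_0 - 1)}{(t_1 + 1)(t_0 - t_1)}, \quad C = -\tfrac{4 (1 + t_0 t_1)}{(t_0 + 1)(t_1 + 1)},$$
consistent with the $t_0 \leftrightarrow t_1$ symmetry that swaps $A$ with $B$ and fixes $C$.

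The last step is purely algebraic. The inner geometric sums in the statement evaluate to
$$\sum_{l=0}^{k-2} t_0^l (-1)^{k-2-l} = \frac{t_0^{k-1} + (-1)^k}{1 + t_0}, \qquad \sum_{m=0}^{k-1} t_0^m (-1)^{k-1-m} = \frac{t_0^k + (-1)^{k-1}}{1 + t_0},$$
after which the outer sum over $k$ splits into two geometric series (in $t_0/t_1$ and in $-1/t_1$), and combining these with the leading $(-1)^{n-1}$ reproduces $\tfrac14(A t_0^n + B t_1^n + C (-1)^n)$ exactly. The main obstacle will be the spectral analysis of the $4$-dimensional block; the rest is routine linear algebra and manipulation of geometric series.
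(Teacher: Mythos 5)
Your proposal is correct, and it follows the same underlying strategy as the paper's proof, differing mainly in how the key structural fact is packaged. The paper invokes Ishii's skein relation from \cite[Cor.~3.2]{Ishii2}, which expresses $\sigma_1^n$ as a $\mathbb{Q}(t_0,t_1)$-combination of $\sigma_1^2$, $\sigma_1$, and $\mathrm{id}$ with coefficients already organized as combinations of $t_0^n$, $t_1^n$, $(-1)^n$; it then closes the tangles, using $\LG(\mathrm{unknot})=1$, $\LG(\mathrm{Hopf})=t_0+t_1-t_0t_1-1$, and $\LG(\mathrm{unlink})=0$. Your approach re-derives the underlying fact from scratch: you read off the block decomposition of $R_{\LG}$ on $W\otimes W$, verify the spectrum is $\{t_0,t_1,-1\}$ with $R_{\LG}$ diagonalizable, and then pin down the coefficients $A,B,C$ from the trace values at $n=0,1,2$ --- which are exactly the same three anchors (unlink, unknot, Hopf) used by the paper. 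The two proofs are thus dual presentations of the same Lagrange-interpolation argument: the paper's skein-relation coefficients are precisely your $A,B,C$ in disguise. What your version buys is self-containment (no appeal to \cite{Ishii2}) at the cost of the linear-algebraic work of identifying the $4\times 4$ block $U=\mathrm{span}(w_{14},w_{23},w_{32},w_{41})$ and verifying that the $-1$-eigenspace there is $2$-dimensional; I checked that $R|_U+I$ has rank $2$ (rows $3,4$ are scalar multiples/combinations of rows $1,2$), so this step is sound, and with trace $t_0+t_1-2$ and determinant $t_0t_1$ the remaining pair of eigenvalues is forced to be $\{t_0,t_1\}$. I also verified your solved values of $A,B,C$ satisfy the $3\times 3$ system, and that folding the geometric sums back against $(-1)^{n-1}$ reproduces $\tfrac14(At_0^n+Bt_1^n+C(-1)^n)$, so the final algebraic matching goes through as claimed. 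One small presentational point: it would be worth stating explicitly that diagonalizability is what guarantees the absence of $n\lambda^{n-1}$-type terms in $\tr((\mathrm{id}\otimes h)R^n)$, since that is exactly why the ansatz $At_0^n+Bt_1^n+C(-1)^n$ is justified.
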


\begin{proof}
    We use the first skein relation from \cite[Cor.3.2]{Ishii2} to simplify $\sigma_1^n$ for $n\geq 2$:
\begin{align*}
    &\LG\left(~
\begin{tikzpicture}[scale=.75]
    \foreach \x in {0,1.5}
    {
    \draw[->] (1,\x) to[out=90,in=-90] (0,\x+1);
    \draw[over,->] (0,\x) to[out=90,in=-90] (1,\x+1);
    }
    \node at (.5,1.5) {$\vdots$};
    \draw [very thick, decorate,decoration={brace,mirror,amplitude=10pt},xshift=1em] (1,0) -- (1,2.5) node [black,midway,xshift=4em] {$n$ half twists};
\end{tikzpicture}
\right)
\\&=
\left(
\frac{(-1)^n}{(t_0+1)(t_1+1)}
+
\frac{t_0^n}{(t_0+1)(t_0-t_1)}
+
\frac{t_1^n}{(t_1+1)(t_1-t_0)}
\right)
\LG\left(
\begin{tikzpicture}[scale=.75]
    \draw[->] (1,0) to[out=90,in=-90] (0,1);
    \draw[over,->] (0,0) to[out=90,in=-90] (1,1);
    \draw[->] (1,1) to[out=90,in=-90] (0,2);
    \draw[over,->] (0,1) to[out=90,in=-90] (1,2);
\end{tikzpicture}
\hspace{-1em}
\right)
\\
&-
\left(
\frac{(-1)^n(t_0+t_1)}{(t_0+1)(t_1+1)}
+
\frac{t_0^n(t_1-1)}{(t_0+1)(t_0-t_1)}
+
\frac{t_1^n(t_0-1)}{(t_1+1)(t_1-t_0)}
\right)
\LG\left(
\begin{tikzpicture}[scale=.75]
\begin{scope}
    \draw[->] (1,0) to[out=90,in=-90] (0,1);
    \draw[over,->] (0,0) to[out=90,in=-90] (1,1);
\end{scope}
\end{tikzpicture}
\hspace{-1em}
\right)
\\
&+
\left(
\frac{(-1)^nt_0t_1}{(t_0+1)(t_1+1)}
-
\frac{t_0^nt_1}{(t_0+1)(t_0-t_1)}
-
\frac{t_0t_1^n}{(t_1+1)(t_1-t_0)}
\right)
\LG\left(~
\begin{tikzpicture}[scale=.75]
    \draw[->] (0,0) to (0,1);
    \draw[->] (.75,0) to (.75,1);
\end{tikzpicture}
~
\right)\,.
\end{align*}
Applying the partial trace to these tangles to evaluate $\LG$ gives 
\begin{align*}
    LG(T(2,n))=&\left(
\frac{(-1)^n}{(t_0+1)(t_1+1)}
+
\frac{t_0^n}{(t_0+1)(t_0-t_1)}
+
\frac{t_1^n}{(t_1+1)(t_1-t_0)}
\right)\cdot (t_0+t_1 -t_0t_1-1)
\\&-
\left(
\frac{(-1)^n(t_0+t_1)}{(t_0+1)(t_1+1)}
+
\frac{t_0^n(t_1-1)}{(t_0+1)(t_0-t_1)}
+
\frac{t_1^n(t_0-1)}{(t_1+1)(t_1-t_0)}
\right)
\end{align*}
where $(t_0+t_1 -t_0t_1-1)$ is the value on the Hopf link and the trace of the identity resolution vanishes. Then we reduce the expression in order to clear the denominators. We rearrange the terms and with some patience we get the expected formula.
\end{proof}

\begin{proposition}
Conjectures~\ref{conj.1}, \ref{conj.2}, and \ref{conj.3} hold for all \texorpdfstring{$(2,n)$}{(2,n)}-torus links.
\end{proposition}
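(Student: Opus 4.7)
The plan is to reindex the two nested sums in Lemma~\ref{valuetorus} so as to read off the explicit coefficient function $(i,j) \mapsto a_{ij}$ of $\LG(T(2,n))$, and then verify the three conjectures by direct inspection. Substituting $i = \ell + 1,\ j = n - k$ in the first sum converts it to $\sum (-1)^{n-i-j-1}\, t_0^i t_1^j$ over the range $\{i \geq 1,\ j \geq 0,\ i+j \leq n-1\}$, while substituting $i = m,\ j = n - k + 1$ in the second sum converts it to the same expression over the range $\{i \geq 0,\ j \geq 1,\ i+j \leq n\}$. Crucially, the two sums contribute \emph{the same} sign $(-1)^{n-i-j-1}$ on their overlap. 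Adding these to the constant term $(-1)^{n-1}$ yields
\[
|a_{ij}| = \begin{cases} 2 & \text{if } i, j \geq 1 \text{ and } i + j \leq n-1, \\ 1 & \text{if } (i, j) \in S \text{ and } (i,j) \text{ is not as above}, \\ 0 & \text{otherwise,} \end{cases}
\]
where $S = \{(i, j) \in \mathbb{N}^2 : i + j \leq n\} \setminus \{(0, n), (n, 0)\}$, and every nonzero coefficient carries the sign $(-1)^{n-i-j-1}$.

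Conjecture~\ref{conj.1} follows at once. The torus link $T(2, n)$ has $\mu = 1$ component when $n$ is odd and $\mu = 2$ when $n$ is even, so $\mu \equiv n \pmod{2}$ and $(-1)^{n-i-j-1} = (-1)^{\mu+i+j+1}$. Moreover, $S$ coincides with the lattice points of the pentagon with vertices $(0,0), (n-1, 0), (n-1, 1), (1, n-1), (0, n-1)$, so $(a_{ij})$ has no interior zeros.

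For Conjecture~\ref{conj.2}, I would split on the value of $|a_{P_m}|$, where $P_m = \tfrac{1}{2}(P_1 + P_2) \in \mathbb{Z}^2$ is the midpoint of $P_1, P_2 \in \mathbb{Z}^2$. If $|a_{P_m}| = 2$, the inequality $|a_{P_1}||a_{P_2}| \leq 4 = |a_{P_m}|^2$ is automatic since $\max |a_{ij}| = 2$. If $|a_{P_m}| = 0$, the no interior zeros property forces at least one of $P_1, P_2$ outside $S$, giving a zero product. The only subtle case is $|a_{P_m}| = 1$: then $P_m$ lies on one of the three line boundaries $j = 0$, $i = 0$, or $i + j = n$, and if $P_1$ is in the strict interior (value $2$), a quick check of coordinates shows $P_2$ strictly violates the same line on the opposite side and therefore lies outside $S$; otherwise both $|a_{P_1}|, |a_{P_2}| \leq 1$ and the product is at most $1$.

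Finally, Conjecture~\ref{conj.3} falls out for free: the only two nonempty level sets $\{|a_{ij}| \geq 1\} = S$ and $\{|a_{ij}| \geq 2\} = \{i, j \geq 1,\ i+j \leq n-1\}$ are both lattice points of convex polygons, so neither has interior zeros. The main labour in this programme is the initial coefficient extraction, which demands careful bookkeeping of the overlap between the two reindexed sums; once the formula is in hand, everything else reduces to the short case analysis described above.
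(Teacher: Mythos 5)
Your proof is correct and follows essentially the same route as the paper: both extract the coefficient pattern of $\LG(T(2,n))$ from Lemma~\ref{valuetorus}, observe that all nonzero coefficients have absolute value $1$ or $2$ with the support and the interior ``value-$2$'' region both equal to lattice points of convex polygons, and conclude by a short case analysis on the small values. Your version is slightly more self-contained than the paper's --- you give a closed formula for $|a_{ij}|$ rather than reading the pattern off Figure~\ref{fig:FIG8}, and you justify the key midpoint step (where the paper simply asserts ``$c = 2$ because of Figure~\ref{fig:FIG8}'') via the affine linearity of the boundary constraints --- but the essential ideas are identical.
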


\begin{proof}
    From the expression for $\LG(T(2,n))$ given in Lemma~\ref{valuetorus}, it follows at once that Conjecture \ref{conj.1} is satisfied. Expanding on that, we draw the graph representing the absolute value of the coefficient in front of the monomial $t_0^i t_1^j$ as a function of $(i,j)$, see Figure~\ref{fig:FIG8}. The graph is obtained by adding the different ``layers'' of coefficients in the formula from Lemma~\ref{valuetorus}: the \green{green} layer, the \blue{blue} layer, and the isolated \red{red} coefficient. Figure~\ref{fig:FIG8} shows clearly that Conjecture \ref{conj.3} is true.

    To prove Conjecture \ref{conj.2}, we want to show that the mountain of absolute values $|a_{i,j}|$ of coefficients of $\LG(T(2,n))$ is 2d log-concave. The only possible values for $|a_{i,j}|$ are in the set $\{0,1,2\}$. If $a,b,c \in \{0,1,2\}$ with $c \neq 0$, which we can suppose since $(a_{i,j})$ has no interior zeros, are three coefficients of $\LG(T(2,n))$, let us see that $ab \leq c^2$. \\
$\bullet$ \ovalbox{$(a,b,c) = (0,0,0)$}\,:  $0 \times 0 \leq 0^2 = 0$;\\
$\bullet$ \ovalbox{$(a,b,c) = (0,0,1)$}\,:   $0 \times 0 \leq 1^2 = 1$;\\
$\bullet$ \ovalbox{$(a,b,c) = (0,0,2)$}\,:   $0 \times 0 \leq 2^2 = 4$;\\
In fact, if $a$ \textit{or} $b$ is zero, the computation is similar to one of the previous cases. So the remaining cases to study are when $a \neq 0$ \textit{and} $b \neq 0$. \\
$\bullet$ \ovalbox{$(a,b) = (1,1)$}\,:   Then $c = 1$ or $c = 2$ because of Figure~\ref{fig:FIG8}, and the inequality holds;\\
$\bullet$ \ovalbox{$(a,b) = (1,2)$ or $(2,1)$}\,: then $c = 2$ like in the previous case, and the inequality is \mbox{satisfied;}\\
$\bullet$ \ovalbox{$(a,b) = (2,2)$}\,:  once again $c = 2$ because of Figure~\ref{fig:FIG8}. So the inequality is also verified in this last case. \\

This concludes the proof of Conjecture \ref{conj.2} for \texorpdfstring{$(2,n)$}{(2,n)}-torus links.
\end{proof}

\subsection{The conjectures hold for all twist knots}

A twist knot is a Whitehead double of the unknot. We will denote by $K_n$ the twist knot shown in Figure~\ref{fig:FIG9} when $2n - 1$ is positive. If $2n - 1$ is negative, there are $1 - 2n$ crossings of the opposite sort.

\begin{figure}[h!]
\begin{tikzpicture}  
    \draw (0,0) to[out=225,in=-90] (-1,1);
    \draw[point=.5] (1,0) to[out=-45,in=-90] (2,1);
    \draw (-1,1) to[out=90, in=180] (.5,2.5);
    \draw[over,point=.5] (0,2.5) arc(180:0:.5);
    \draw (1,2.5) to[out=90, in=0] (.5,3);
    \draw[over] (1,0) to[out=135,in=-90] (0,1);
    \draw[over] (0,0) to[out=45,in=-90] (1,1);
    \draw[over] (1,1.5) to[out=90,in=-90] (0,2.5);
    \draw[over] (0,1.5) to[out=90,in=-90] (1,2.5);
    \node at (.5,1.25) {$\vdots$};
    \draw[over] (2,1) to[out=90, in=0] (.5,2.5);
    \draw [very thick, decorate,decoration={brace,mirror,amplitude=10pt},xshift=1em] (1.75,.25) -- (1.75,2.25) node [black,midway,xshift=4em] {\begin{tabular}{c} $2n-1$\\ half twists \end{tabular}};
\end{tikzpicture}
\caption{Twist knot $K_n$.}\label{fig:FIG9}
\end{figure}
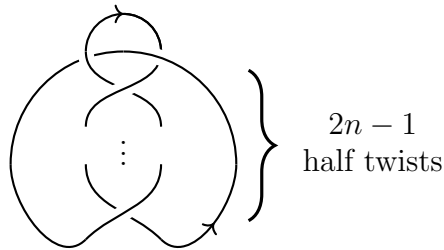

\begin{lemma}\label{valuewist}
    For $n \in \mathbb{Z}$, the value of the Links--Gould polynomial on the twist knot $K_n$ is as follows:\\
    $\bullet$ for $n = 0$:  $\LG(K_0) = \LG(\bigcirc) = 1 ;$\\
    $\bullet$ for $n \leq -1$, setting $l = -n$ :
    \begin{align*}
\LG(K_n)
& = \red{1} + \green{(2-t_0-t_1-t_0^{-1}-t_1^{-1} + t_0 t_1 + t_0^{-1} t_1^{-1}) \sum\limits_{k = 0}^{l-1} (t_0 t_1)^k} \\
& + \blue{(4 - 2 t_0 - 2 t_1 - 2 t_0^{-1} - 2 t_1^{-1} + t_0 t_1 + t_0^{-1} t_1^{-1} + t_0 t_1^{-1} + t_0^{-1} t_1) \sum\limits_{k = 0}^{l-1}(2 l - 2 k -1) (t_0 t_1)^k} \,;
\end{align*}
    $\bullet$ for $n \geq 1$, writing the formula in terms of the variables $(t_0^{-1}, t_1^{-1})$ for clarity:

\begin{samepage}    
    \begin{align*}
\LG(K_n)(t_0^{-1}, t_1^{-1})
& = \red{(t_0 t_1)^{n-1}(-t_0^2 t_1 - t_0 t_1^2 + t_0^2 + t_1^2 + 2 t_0 t_1 - t_0 - t_1 +1)} \\
& + \green{(t_0^2t_1^2 - 3 t_0^2 t_1 - 3 t_0 t_1^2 + 2 t_0^2 + 2 t_1^2 + 5 t_0 t_1 - 3 t_0 - 3 t_1 +2) \sum\limits_{k = 0}^{n-2} (t_0 t_1)^k} \\
& + \blue{(t_0^2 t_1^2 - 2 t_0^2 t_1 - 2 t_0 t_1^2 + t_0^2 + t_1^2 + 4 t_0 t_1 - 2 t_0 - 2 t_1 +1) \sum\limits_{k = 0}^{n-2}(2 n - 2 k -3) (t_0 t_1)^k}.
\end{align*}   
\end{samepage}
\end{lemma}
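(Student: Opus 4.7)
The plan is to mimic the strategy used for the $(2,n)$-torus link in Lemma~\ref{valuetorus}, namely to apply the skein relation from \cite[Cor.3.2]{Ishii2} to collapse the twist region of $K_n$ consisting of $2n-1$ half-twists. View $K_n$ as the plat closure of the $2$-strand tangle obtained by stacking the clasp (the two crossings forming the Whitehead-double structure) on top of a $\sigma_1^{2n-1}$ twist region. The case $n=0$ is clear since the clasp and a single half-twist cancel via a Reidemeister II move, yielding the unknot.

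For $n\ne 0$, the skein relation expresses the twist region as a linear combination of three elementary tangles: the identity, the single-crossing tangle, and a tangle that is a pair of opposite cup-cap arcs. Each of these, when reinserted below the clasp and then plat-closed, produces a very simple diagram whose $\LG$ value is already known: two of the three resulting diagrams reduce to the unknot (giving $\LG = 1$), and the remaining one reduces to the positive or negative Hopf link (whose $\LG$ equals $t_0+t_1-t_0t_1-1$, as used in the proof of Lemma~\ref{valuetorus}). Thus $\LG(K_n)$ is obtained as an explicit linear combination of the three rational coefficients
\[
\frac{(-1)^m}{(t_0+1)(t_1+1)},\qquad \frac{t_0^m}{(t_0+1)(t_0-t_1)},\qquad \frac{t_1^m}{(t_1+1)(t_1-t_0)},
\]
with $m=2n-1$, evaluated against the three fixed $\LG$-values described above.

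The remaining work is algebraic: clear the denominators by expanding $\frac{1}{(t_0-t_1)}$, etc., using that the full expression lies in $\mathbb Z[t_0^{\pm 1},t_1^{\pm 1}]$ by \cite[Theorem 1]{Ishii}, and regroup the monomials into the three coloured blocks displayed in the lemma. The geometric series $\sum_{k=0}^{l-1}(t_0t_1)^k$ and the arithmetic--geometric series $\sum_{k=0}^{l-1}(2l-2k-1)(t_0t_1)^k$ appear naturally when the rational fractions in $t_0^m$ and $t_1^m$ are expanded in the correct range of exponents; the latter arises from the simple pole at $t_0=t_1$, which forces a derivative-type term. The sign $(-1)^{2n-1}=-1$ in the first block is responsible for the overall sign change between $n\le -1$ and $n\ge 1$, and explains why the positive-$n$ formula is most naturally stated after the substitution $(t_0,t_1)\mapsto(t_0^{-1},t_1^{-1})$ (equivalently, using property (2) of the Links--Gould polynomial together with the fact that mirroring $K_n$ replaces $n$ by $1-n$).

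The main obstacle I expect is purely organisational: tracking the correct ranges of exponents and signs when one of the two ``pole'' contributions $t_0^m/(t_0-t_1)$ or $t_1^m/(t_1-t_0)$ is expanded, and then verifying that the remaining terms collect exactly into the advertised \green{green} and \blue{blue} blocks. A clean way to handle this uniformly for all $n\ne 0$ is to do the cases $n\le -1$ and $n\ge 1$ in parallel, exploit the mirror symmetry $\LG(K_n^\ast;t_0,t_1)=\LG(K_n;t_0^{-1},t_1^{-1})$ to reduce to one sign of $n$, and then induct on $|n|$ by subtracting the $(|n|-1)$-case to check that the increments match the closed-form expression. No new representation-theoretic input is needed beyond the skein calculus already used for Lemma~\ref{valuetorus}.
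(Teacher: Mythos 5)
Your proposal and the paper's proof take genuinely different routes. The paper's proof is a one-line citation: it invokes Proposition~2.18 of \cite{Kohli2}, which already gives an explicit (skein-derived) expression for the Links--Gould invariant of twist knots, and the lemma follows by algebraically simplifying that expression into the displayed three-block form. You instead propose to re-derive the formula from scratch by applying the Ishii skein relation (as in the proof of Lemma~\ref{valuetorus}) to the twist region of $K_n$. This is a reasonable alternative strategy, but as written it contains a concrete error.

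The gap is in the claim about what the three resolved diagrams are. The Ishii relation of \cite[Cor.\,3.2]{Ishii2} expresses $\sigma_1^m$ as a linear combination of $\sigma_1^2$, $\sigma_1$, and the identity tangle; there is no ``pair of opposite cup-cap arcs'' among the basis tangles. When you substitute these three tangles into the twist region of $K_n$ and close up with the clasp, you do not get two unknots and a Hopf link. You get the twist-knot diagram with $0$, $1$, or $2$ half-twists in the twist region (and the clasp unchanged). The $0$-half-twist case simplifies (giving the unlink or unknot depending on conventions), but the $1$- and $2$-half-twist cases are the trefoil and the figure-eight knot respectively (or the $(2,4)$-torus link, depending on strand connectivity and crossing signs) --- not the Hopf link. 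These are nontrivial diagrams whose $\LG$ values would have to be supplied as further input; the claim that they trivialize is what makes the proposed argument appear shorter than it can actually be. Once corrected, your approach would essentially reprove the formula of \cite[Prop.\,2.18]{Kohli2} rather than bypass it, so the paper's citation is the more economical route. The rest of your outline --- clearing denominators using $\LG\in\mathbb{Z}[t_0^{\pm1},t_1^{\pm1}]$, the geometric and arithmetic--geometric series coming from the simple poles, and the mirror symmetry linking $n\le -1$ and $n\ge 1$ --- is sound in spirit and matches the kind of manipulation the paper's cited proposition encapsulates.
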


\begin{proof}
    We apply \cite[Prop. 2.18]{Kohli2} to expand and simplify the given expressions.
\end{proof}

By adding the three layers of each formula, shown in different colors, we obtain the graphs in Figures~\ref{fig:FIG10} and \ref{fig:FIG11}.

\begin{proposition}
Conjectures~\ref{conj.1}, \ref{conj.2}, and \ref{conj.3} hold for all twist knots.
\end{proposition}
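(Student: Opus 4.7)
The plan is to mirror, step-for-step, the argument just carried out for $(2,n)$-torus links, using the closed-form expressions supplied by Lemma~\ref{valuewist} as the starting point. The case $n=0$ is trivial since $K_0=\bigcirc$ and $\LG(K_0)=1$. For $n\neq 0$, the formulas present $\LG(K_n)$ as a sum of three colored layers (red/green/blue): the red layer contributes a fixed ``core'' polynomial, the green layer spreads a constant coefficient pattern along the antidiagonal $\sum_k (t_0t_1)^k$, and the blue layer spreads a similar pattern with coefficients scaled by the arithmetic progression $(2l-2k-1)$ (resp.\ $(2n-2k-3)$). Expanding these sums and collecting terms yields an explicit piecewise expression for each coefficient $a_{ij}$ as a function of $i$, $j$, and $n$.

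Conjecture~\ref{conj.1} is then immediate by inspection. Since $K_n$ is a knot, $\mu=1$ and the required sign condition reads $a_{ij}=(-1)^{i+j}|a_{ij}|$. Every monomial appearing in the red, green and blue factors of Lemma~\ref{valuewist} has sign governed by the parity of its total $t_0$--$t_1$ degree, and the factor $(t_0t_1)^k$ preserves that parity; this can be verified monomial-by-monomial on the finitely many polynomial factors that multiply the sums. The no-interior-zeros condition is then verified by describing the support of $(a_{ij})$, which forms a hexagonal lattice region that one checks coincides with its convex hull intersected with $\BZ^2$.

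For Conjecture~\ref{conj.3}, the resulting $|a_{ij}|$ is a superposition of constants and a linear-in-position arithmetic progression on the central strip, producing precisely the pyramid shape depicted in Figures~\ref{fig:FIG10}--\ref{fig:FIG11}; unimodality amounts to convexity of each level set $\{|a_{ij}|\geq K\}$, which is visible directly from this pyramid structure. For Conjecture~\ref{conj.2}, unlike the torus link case the values are no longer bounded in $\{0,1,2\}$, so we cannot finish with a finite case check on triples. Instead, we partition the pairs $\bigl((i+k,j+l),(i-k,j-l)\bigr)$ according to which of the three layers contribute to each endpoint and to the midpoint $(i,j)$. In the bulk of the support only the blue layer is active and the log-concavity inequality $|a_{i+k,j+l}|\cdot|a_{i-k,j-l}|\leq|a_{ij}|^2$ reduces to the classical statement that a non-negative arithmetic progression is log-concave, namely $(2n-2k-2s-3)(2n-2k+2s-3)\leq(2n-2k-3)^2$.

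The main obstacle is the boundary analysis: near the edges and corners of the support, the red and green layers superpose asymmetrically onto the blue layer, so three coefficients in a log-concavity triple may draw from different combinations of layers. Handling these cases requires a finite but careful enumeration of the boundary configurations (essentially one per edge and one per corner of the support hexagon), in each of which the inequality reduces to an elementary numerical check between the constant coefficients coming from the red/green layers and a shifted value of the arithmetic progression from the blue layer. Once these configurations are disposed of, the remaining interior inequalities all reduce to log-concavity of arithmetic progressions, completing the proof.
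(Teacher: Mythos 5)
Your approach to Conjecture~\ref{conj.2} is genuinely different from the paper's. The paper proves Conjecture~\ref{conj.2} for $n\leq -1$ by induction on $l=-n$: the base case $l\leq 4$ is disposed of by the prior computer verification for prime knots with at most $12$ crossings (since $K_{-l}$ has $2l+3$ crossings), and the inductive step observes from Figure~\ref{fig.twistcont} that the coefficient table of $K_{-l-1}$ agrees, outside a fixed $3\times 3$ block near the constant term, with that of $K_{-l}$ shifted diagonally by $(1,1)$, so that Corollary~\ref{generalizedlogconcavity} reduces the verification to finitely many consecutive triples touching that block. You instead propose a direct analysis with no induction and no computer base case, which would be a genuine improvement if carried through.

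However, your treatment of the interior has a gap. The green layer is not confined to the boundary --- both green and blue contribute to every nonzero coefficient in the support, with green supplying a constant part and blue the part linear in $l$ --- but more importantly, the claim that interior log-concavity ``reduces to the classical statement that a non-negative arithmetic progression is log-concave'' is valid only in the $(1,1)$ (anti-diagonal) direction, where moving from $(i,j)$ to $(i+1,j+1)$ shifts the summation index $k$ in Lemma~\ref{valuewist} by one while leaving the offset $i-j$ fixed. Along any other direction --- a row, a column, or the $(1,-1)$ diagonal --- three consecutive interior coefficients straddle different offsets and hence have different leading coefficients in $l$ (for instance $2l-\cdot$, $8l-\cdot$, $12l-\cdot$ along a row, as visible in Figure~\ref{fig:FIG10}), so the required inequality is a genuine quadratic in $l$ that must be checked directly, not an instance of arithmetic-progression log-concavity. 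Since the strip of nonzero coefficients has bounded width there are only finitely many such configurations, so your plan is salvageable, but the interior case analysis is substantially larger than the boundary enumeration you describe; invoking Corollary~\ref{generalizedlogconcavity} to restrict attention to consecutive triples, as the paper does, would keep that enumeration under control.
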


\begin{proof}
    Conjectures \ref{conj.1} and \ref{conj.3} are straightforward consequences of the formulas from Lemma~\ref{valuewist} and carefully observing Figures~\ref{fig:FIG10} and \ref{fig:FIG11}. The proof of Conjecture \ref{conj.2} is far more tedious. Let us prove that $\LG(K_n)$ satisfies the conjecture for $n \leq -1$ by induction on $l = -n$. The other case works the same way. 

    The knot $K_n = K_{-l}$ has $2 + (1+2l) = 2l + 3$ crossings. So for $l \leq 4$, we know that \ref{conj.2} is verified for $K_{-l}$ because of prior verifications for knots with up to $12$ crossings. Suppose that for some $l \geq 1$, $K_{-l}$ satisfies Conjecture \ref{conj.2}. We show that $K_{-l-1}$ also satisfies \ref{conj.2}. Indeed, using Figure~\ref{fig:FIG10}, the lower-left corner of the table of coefficients for $\LG(K_{-l-1})$ is given in Figure \ref{fig.twistcont}.

\begin{figure}[h!]
    \begin{tikzpicture}[scale=0.8]

\fill[blue!20] (0,0) rectangle (3,3);

\draw[step=1cm,gray,thin] (0,0) grid (7,7);

\draw[->,thick] (0,0) -- (7.5,0) node[right] {$t_0$};

\draw[->,thick] (0,0) -- (0,7.5) node[above] {$t_1$};

\node[font=\tiny,left] at (0,0.5) {$-1$};
\node[font=\tiny,left] at (0,1.5) {$0$};
\node[font=\tiny,left] at (0,2.5) {$1$};
\node[font=\tiny,left] at (0,3.5) {$2$};
\node[font=\tiny,left] at (0,4.5) {$3$};
\node[font=\tiny,left] at (0,5.5) {$4$};

\node[font=\tiny,below] at (0.5,0) {$-1$};
\node[font=\tiny,below] at (1.5,0) {$0$};
\node[font=\tiny,below] at (2.5,0) {$1$};
\node[font=\tiny,below] at (3.5,0) {$2$};
\node[font=\tiny,below] at (4.5,0) {$3$};
\node[font=\tiny,below] at (5.5,0) {$4$};

\node[font=\tiny,align=center] at (0.5,0.5) {$2l$ \\ $+ 2$};
\node[font=\tiny, align=center] at (1.5,0.5) {$4l$ \\ $+ 3$};
\node[font=\tiny, align=center] at (0.5,1.5) {$4l$ \\ $+ 3$};
\node[font=\tiny, align=center] at (0.5,2.5) {$2l$ \\ $+ 1$};
\node[font=\tiny, align=center] at (2.5,0.5) {$2l$ \\ $+ 1$};
\node[font=\tiny, align=center] at (1.5,1.5) {$10l$ \\ $+ 7$};
\node[font=\tiny, align=center] at (2.5,1.5) {$8l$ \\ $+ 2$};
\node[font=\tiny, align=center] at (1.5,2.5) {$8l$ \\ $+ 2$};
\node[font=\tiny, align=center] at (3.5,1.5) {$2l$ \\ $- 1$};
\node[font=\tiny, align=center] at (1.5,3.5) {$2l$ \\ $- 1$};
\node[font=\tiny, align=center] at (2.5,2.5) {$12l$ \\ $- 3$};
\node[font=\tiny, align=center] at (3.5,2.5) {$8l$ \\ $- 6$};
\node[font=\tiny, align=center] at (2.5,3.5) {$8l$ \\ $- 6$};
\node[font=\tiny, align=center] at (3.5,3.5) {$12l$ \\ $- 15$};
\node[font=\tiny, align=center] at (4.5,2.5) {$2l$ \\ $- 3$};
\node[font=\tiny, align=center] at (2.5,4.5) {$2l$ \\ $- 3$};
\node[font=\tiny, align=center] at (3.5,4.5) {$8l$ \\ $- 14$};
\node[font=\tiny, align=center] at (4.5,3.5) {$8l$ \\ $- 14$};
\node[font=\tiny, align=center] at (4.5,4.5) {$12l$ \\ $- 27$};

\node at (3.5,5.6) {$\iddots$};
\node at (4.5,5.6) {$\iddots$};
\node at (5.5,5.6) {$\iddots$};
\node at (5.5,4.6) {$\iddots$};
\node at (5.5,3.6) {$\iddots$};

\end{tikzpicture}
\caption{Coefficients of $\LG(K_{-l-1})$ near the constant term.}
\label{fig.twistcont}
\end{figure}

Outside of the $3 \times 3$ blue square, the rest of the table reproduces part of the table of coefficients for $\LG(K_{-l})$. So any relation needed to satisfy Conjecture \ref{conj.2} that does not involve any of the nine coefficients in the blue square is true by the induction hypothesis. To conclude, we need to show that \emph{all} relations involving at least one of these nine coefficients are true. To do this the only way is to consider all possible cases. A useful tool here is Corollary~\ref{generalizedlogconcavity} that reduces the study of inequalities along any line to proving the inequalities involving three consecutive coefficients, most of which are already hold by induction. We omit the remaining details of verifying the inequalities for brevity. 
    
\end{proof}

\subsection{\texorpdfstring{$\LG$}{LG} is not (denormalized) Lorentzian on alternating links for any obvious choice of variables}
\label{subs.noLG}

Here we consider various choices of variables in the presentation for the Links--Gould polynomial of the trefoil. For each choice, we exhibit a choice of derivative and corresponding quadratic form whose Hessian has more than one positive eigenvalue.

\begin{align*}
    \LG(\mathsf{3_1})(t_0, t_1) &= -t_0^2t_1 + t_0^2 - t_0t_1^2 + 2t_0t_1 - t_0 + t_1^2 - t_1 + 1
\\
\normal\circ\homog(\LG(\mathsf{3_1})(-t_0, -t_1))
&=
\left\lbrace 
\begin{matrix}
    \dfrac{1}{2}t_0^2t_1 + \dfrac{1}{2}t_0^2z + \dfrac{1}{2}t_0t_1^2 + 2t_0t_1z \\[2ex]+ \dfrac{1}{2}t_0z^2 + \dfrac{1}{2}t_1^2z + \dfrac{1}{2}t_1z^2 + \dfrac{1}{3!}z^3
\end{matrix}
\right.
\\
\frac{\partial}{\partial z}[\normal\circ\homog(\LG(\mathsf{3_1})(-t_0, -t_1))]
&=
\frac{1}{2}t_0^2 + 2t_0t_1 + t_0z + \frac{1}{2}t_1^2 + t_1z + \frac{1}{2}z^2
\\
H &= \begin{pmatrix} 1&2&1\\2&1&1\\1&1&1
\end{pmatrix}
\end{align*}
\[\lambda_1= -1,\quad \lambda_2= 2-\sqrt{3},\quad \lambda_3= 2+\sqrt{3}\,.\]
Since this derivative has a Hessian with two positive eigenvalues, the Links--Gould polynomial is not denormalized Lorentzian.

We also consider replacing a variable with a power of itself together with a variable shift such that we obtain a polynomial. 

\begin{align*}
LG(\mathsf{3_1})(t_0,t_1^{-1})  \cdot  t_1^2 &= t_0^2 t_1^2 - t_0^2 t_1 - t_0 t_1^2 + 2 t_0 t_1 - t_0 + t_1^2 - t_1 + 1\\
\normal\circ\homog(\LG(\mathsf{3_1})(t_0,t_1^{-1})) &= \left\lbrace
\begin{matrix}
    \dfrac{1}{4} t_0^2 t_1^2 + \dfrac{1}{2} t_0^2 t_1 z + \dfrac{1}{2} t_0 t_1^2 z + t_0 t_1 z^2
    \\[2ex]
    + \dfrac{1}{6} t_0 z^3 + \dfrac{1}{4} t_1^2 z^2 + \dfrac{1}{6} t_1 z^3 + \dfrac{1}{24} z^4
\end{matrix}\right. \\
\frac{\partial^2}{\partial t_0 \partial z}[\normal\circ\homog(\LG(\mathsf{3_1})(t_0,t_1^{-1}))]&= t_0 t_1 + \frac{1}{2} t_1^2 + 2 t_1 z + \frac{1}{2} z^2
\\
H &= \begin{pmatrix} 0&1&0\\1&1&2\\0&2&1
\end{pmatrix}
\end{align*}
\[\lambda_1 = -1.39138238, \quad \lambda_2= 0.22713444,\quad  
\lambda_3 = 3.16424794\,.\]

Another choice of variables for the Links-Gould polynomial comes from \cite{GK:multi}. We write this polynomial $V_1(p,q)$. Then 
\begin{align*}
    V_{1,\mathsf{3_1}}(p,q)&=p^2q^2 - pq^2 - pq + 2q + 1 - qp^{-1} - p^{-1} + p^{-2}
    \\
    \normal\circ\homog(p^2\cdot V_{1,\mathsf{3_1}}(-p,q))&=\left\lbrace
    \begin{matrix}
        \dfrac{1}{2\cdot 4!}p^4q^2 + \dfrac{1}{2\cdot 3!}p^3q^2z + \dfrac{1}{2\cdot 3!}p^3qz^2+ \dfrac{1}{3!}p^2qz^3
        \\[2ex]
         + \dfrac{1}{2\cdot 4!}p^2z^4 + \dfrac{1}{4!}pqz^4 + \dfrac{1}{5!}pz^5 + \dfrac{1}{6!}z^6
    \end{matrix}  
    \right.
    \\
    \frac{\partial^4}{\partial p^3 \partial z}
    [\normal\circ\homog(p^2\cdot V_{1,\mathsf{3_1}}(-p,q))]
    &=
    pq + qz + \frac{1}{2}z^2
    \\
    H&=\begin{pmatrix} 0&1&0\\1&0&1\\0&1&1
\end{pmatrix}
\end{align*}
\[\lambda_1=-1.24698, 
    \quad 
    \lambda_2=0.445042,
    \quad
    \lambda_3=1.80194\,.\]

There are also Conway polynomial like changes of variables for $V_1$ which desymmetrize $V_1$. We denote the change of variables in $u = p + p^{-1}q^{-1}$ and $q$ by $\tilde V_1(u,q)$. We have
\begin{align*}
    \tilde V_{1,\mathsf{3_1}}(u,q) &= q^2u^2 + (2q^3-q^2+q)u + (q^4-q^3+q^2-q+1)
    \\
    \normal\circ\homog (\tilde V_{1,\mathsf{3_1}}(-u,-q)) &= 
    \left\lbrace
    \begin{matrix}
        \dfrac{1}{2\cdot 2} u^2q^2 + 
    \dfrac{2}{3!}uq^3+\dfrac{1}{2}uq^2z+\dfrac{1}{2}uqz^2 +\dfrac{1}{4!}q^4
    \\[2ex]
    +\dfrac{1}{3!}q^3z+\dfrac{1}{2\cdot 2}q^2z^2+\dfrac{1}{3!}qz^3+\dfrac{1}{4!}z^4
    \end{matrix}
    \right.
    \\
    \frac{\partial^2}{\partial u \partial q }[\normal\circ\homog (\tilde V_{1,\mathsf{3_1}}(-u,-q))]
    &=uq+q^2+qz+\frac{1}{2}z^2
    \\
    H&=\begin{pmatrix} 0&1&0\\1&2&1\\0&1&1
\end{pmatrix}
\end{align*}
\[\lambda_1=-0.532089,\quad
\lambda_2= 0.652704,\quad
\lambda_3= 2.87939\,.\]
We also set $\hat V_1(v,q)$ to be the polynomial in the variables $v = p + p^{-1}q^{-1} - q - q^{-1}$ and $q$. Then
\begin{align*}
    \hat V_{1,\mathsf{3_1}}(v,q) &= v^2q^2 - (q^2+q)v
    \\
    \normal\circ\homog (\hat V_{1,\mathsf{3_1}}(-v,q)) &= \frac{1}{2\cdot 2}v^2q^2  +\frac{1}{2}vq^2z+\frac{1}{2}vqz^2
    \\
    \frac{\partial^2}{\partial v \partial q}[\normal\circ\homog (\hat V_{1,\mathsf{3_1}}(-v,q))]&=vq + qz + \frac{1}{2}z^2
\end{align*}
\[H=\begin{pmatrix} 0&1&0\\1&0&1\\0&1&1
\end{pmatrix}\]
\[\lambda_1=-1.24698, 
    \quad 
    \lambda_2=0.445042,
    \quad
    \lambda_3=1.80194\,.\]
Which is the same Hessian as $\LG(\mathsf{3_1})(t_0,t_1)$.

\section{Values of Links--Gould on some classes of links}\label{sec.values}
We present the coefficients of the Links--Gould polynomial on $(2,n)$-torus links, and on twist knots in the variables $(t_0,t_1)$ and $(t_0^{-1},t_1^{-1})$.

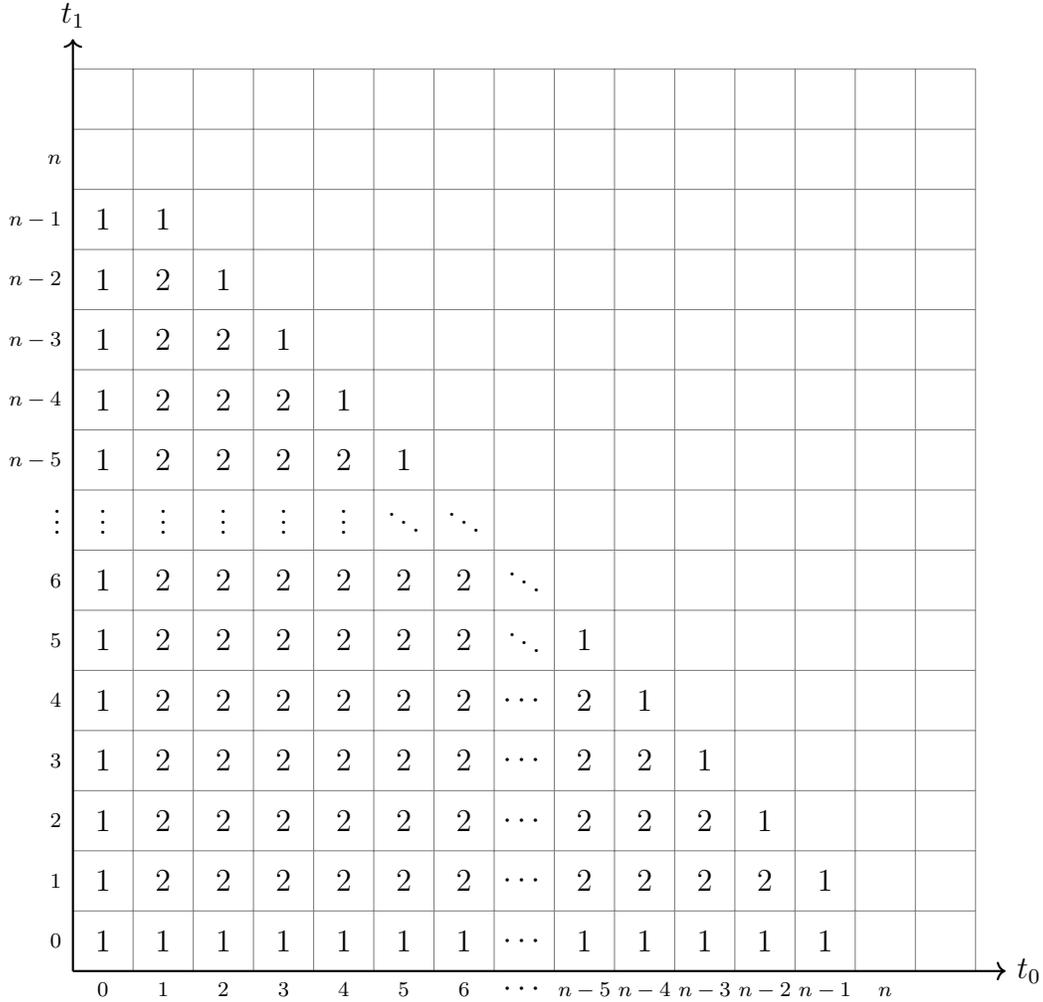
\begin{figure}[h!]
\begin{tikzpicture}[scale=0.8]

\draw[step=1cm,gray,thin] (0,0) grid (15,15);

\draw[->,thick] (0,0) -- (15.5,0) node[right] {$t_0$};

\draw[->,thick] (0,0) -- (0,15.5) node[above] {$t_1$};

\node[font=\tiny,left] at (0,0.5) {$0$};
\node[font=\tiny,left] at (0,1.5) {$1$};
\node[font=\tiny,left] at (0,2.5) {$2$};
\node[font=\tiny,left] at (0,3.5) {$3$};
\node[font=\tiny,left] at (0,4.5) {$4$};
\node[font=\tiny,left] at (0,5.5) {$5$};
\node[font=\tiny,left] at (0,6.5) {$6$};
\node[left] at (0,7.6) {$\vdots$};
\node[font=\tiny,left] at (0,8.5) {$n-5$};
\node[font=\tiny,left] at (0,9.5) {$n-4$};
\node[font=\tiny,left] at (0,10.5) {$n-3$};
\node[font=\tiny,left] at (0,11.5) {$n-2$};
\node[font=\tiny,left] at (0,12.5) {$n-1$};
\node[font=\tiny,left] at (0,13.5) {$n$};

\node[font=\tiny,below] at (0.5,0) {$0$};
\node[font=\tiny,below] at (1.5,0) {$1$};
\node[font=\tiny,below] at (2.5,0) {$2$};
\node[font=\tiny,below] at (3.5,0) {$3$};
\node[font=\tiny,below] at (4.5,0) {$4$};
\node[font=\tiny,below] at (5.5,0) {$5$};
\node[font=\tiny,below] at (6.5,0) {$6$};
\node[below] at (7.5,0) {$\cdots$};
\node[font=\tiny,below] at (8.5,0) {$n-5$};
\node[font=\tiny,below] at (9.5,0) {$n-4$};
\node[font=\tiny,below] at (10.5,0) {$n-3$};
\node[font=\tiny,below] at (11.5,0) {$n-2$};
\node[font=\tiny,below] at (12.5,0) {$n-1$};
\node[font=\tiny,below,align=center] at (13.5,0) {$n$\vphantom{1}};

\node at (0.5,0.5) {1};
\node at (0.5,1.5) {1};
\node at (0.5,2.5) {1};
\node at (0.5,3.5) {1};
\node at (0.5,4.5) {1};
\node at (0.5,5.5) {1};
\node at (0.5,6.5) {1};

\node at (0.5,8.5) {1};
\node at (0.5,9.5) {1};
\node at (0.5,10.5) {1};
\node at (0.5,11.5) {1};
\node at (0.5,12.5) {1};
\node at (1.5,12.5) {1};
\node at (2.5,11.5) {1};
\node at (3.5,10.5) {1};
\node at (4.5,9.5) {1};
\node at (5.5,8.5) {1};

\node at (8.5,5.5) {1};
\node at (9.5,4.5) {1};
\node at (10.5,3.5) {1};
\node at (11.5,2.5) {1};
\node at (12.5,1.5) {1};
\node at (12.5,0.5) {1};
\node at (11.5,0.5) {1};
\node at (10.5,0.5) {1};
\node at (9.5,0.5) {1};
\node at (8.5,0.5) {1};

\node at (6.5,0.5) {1};
\node at (5.5,0.5) {1};
\node at (4.5,0.5) {1};
\node at (3.5,0.5) {1};
\node at (2.5,0.5) {1};
\node at (1.5,0.5) {1};

\node at (1.5,1.5) {2};
\node at (2.5,1.5) {2};
\node at (3.5,1.5) {2};
\node at (4.5,1.5) {2};
\node at (5.5,1.5) {2};
\node at (6.5,1.5) {2};

\node at (8.5,1.5) {2};
\node at (9.5,1.5) {2};
\node at (10.5,1.5) {2};
\node at (11.5,1.5) {2};

\node at (1.5,2.5) {2};
\node at (2.5,2.5) {2};
\node at (3.5,2.5) {2};
\node at (4.5,2.5) {2};
\node at (5.5,2.5) {2};
\node at (6.5,2.5) {2};

\node at (8.5,2.5) {2};
\node at (9.5,2.5) {2};
\node at (10.5,2.5) {2};

\node at (1.5,3.5) {2};
\node at (2.5,3.5) {2};
\node at (3.5,3.5) {2};
\node at (4.5,3.5) {2};
\node at (5.5,3.5) {2};
\node at (6.5,3.5) {2};

\node at (8.5,3.5) {2};
\node at (9.5,3.5) {2};

\node at (1.5,4.5) {2};
\node at (2.5,4.5) {2};
\node at (3.5,4.5) {2};
\node at (4.5,4.5) {2};
\node at (5.5,4.5) {2};
\node at (6.5,4.5) {2};

\node at (8.5,4.5) {2};

\node at (1.5,5.5) {2};
\node at (2.5,5.5) {2};
\node at (3.5,5.5) {2};
\node at (4.5,5.5) {2};
\node at (5.5,5.5) {2};
\node at (6.5,5.5) {2};

\node at (1.5,6.5) {2};
\node at (2.5,6.5) {2};
\node at (3.5,6.5) {2};
\node at (4.5,6.5) {2};
\node at (5.5,6.5) {2};
\node at (6.5,6.5) {2};

\node at (1.5,8.5) {2};
\node at (2.5,8.5) {2};
\node at (3.5,8.5) {2};
\node at (4.5,8.5) {2};

\node at (1.5,9.5) {2};
\node at (2.5,9.5) {2};
\node at (3.5,9.5) {2};

\node at (1.5,10.5) {2};
\node at (2.5,10.5) {2};

\node at (1.5,11.5) {2};

\foreach \x in {0.5,1.5,...,4.5}
\node at (\x,7.6) {$\vdots$};
\foreach \y in {0.5,1.5,...,4.5}
\node at (7.5,\y) {$\cdots$};
\node at (5.5,7.6) {$\ddots$};
\node at (6.5,7.6) {$\ddots$};
\node at (7.5,6.6) {$\ddots$};
\node at (7.5,5.6) {$\ddots$};
\end{tikzpicture}

\caption{The absolute values of the coefficients of $\LG(T(2,n))$ form a 2d log-concave sequence with no interior zeros, and a unimodal sequence.}\label{fig:FIG8}
\end{figure}

\begin{figure}[h!]
\begin{tikzpicture}[scale=0.8]

\draw[step=1cm,gray,thin] (0,0) grid (16,16);

\draw[->,thick] (0,0) -- (16.5,0) node[right] {$t_0$};

\draw[->,thick] (0,0) -- (0,16.5) node[above] {$t_1$};

\node[font=\tiny,left] at (0,0.5) {$-1$};
\node[font=\tiny,left] at (0,1.5) {$0$};
\node[font=\tiny,left] at (0,2.5) {$1$};
\node[font=\tiny,left] at (0,3.5) {$2$};
\node[font=\tiny,left] at (0,4.5) {$3$};
\node[font=\tiny,left] at (0,5.5) {$4$};
\node[font=\tiny,left] at (0,6.5) {$5$};
\node[left] at (0,7.5) {$\vdots$};
\node[left] at (0,8.5) {$\vdots$};
\node[font=\tiny,left] at (0,9.5) {$l-4$};
\node[font=\tiny,left] at (0,10.5) {$l-3$};
\node[font=\tiny,left] at (0,11.5) {$l-2$};
\node[font=\tiny,left] at (0,12.5) {$l-1$};
\node[font=\tiny,left] at (0,13.5) {$l$};
\node[font=\tiny,left] at (0,14.5) {$l+1$};

\node[font=\tiny,below] at (0.5,0) {$-1$};
\node[font=\tiny,below] at (1.5,0) {$0$};
\node[font=\tiny,below] at (2.5,0) {$1$};
\node[font=\tiny,below] at (3.5,0) {$2$};
\node[font=\tiny,below] at (4.5,0) {$3$};
\node[font=\tiny,below] at (5.5,0) {$4$};
\node[font=\tiny,below] at (6.5,0) {$5$};
\node[below] at (7.5,0) {$\cdots$};
\node[below] at (8.5,0) {$\cdots$};
\node[font=\tiny,below] at (9.5,0) {$l-4$};
\node[font=\tiny,below] at (10.5,0) {$l-3$};
\node[font=\tiny,below] at (11.5,0) {$l-2$};
\node[font=\tiny,below] at (12.5,0) {$l-1$};
\node[font=\tiny,below] at (13.5,0) {$l$};
\node[font=\tiny,below] at (14.5,0) {$l+1$};

\node[align=center] at (0.5,0.5) {$2l$};
\node[font=\tiny, align=center] at (1.5,0.5) {$4l$ \\ $- 1$};
\node[font=\tiny, align=center] at (0.5,1.5) {$4l$ \\ $- 1$};
\node[font=\tiny, align=center] at (0.5,2.5) {$2l$ \\ $- 1$};
\node[font=\tiny, align=center] at (2.5,0.5) {$2l$ \\ $- 1$};
\node[font=\tiny, align=center] at (1.5,1.5) {$10l$ \\ $- 3$};
\node[font=\tiny, align=center] at (2.5,1.5) {$8l$ \\ $- 6$};
\node[font=\tiny, align=center] at (1.5,2.5) {$8l$ \\ $- 6$};
\node[font=\tiny, align=center] at (3.5,1.5) {$2l$ \\ $- 3$};
\node[font=\tiny, align=center] at (1.5,3.5) {$2l$ \\ $- 3$};
\node[font=\tiny, align=center] at (2.5,2.5) {$12l$ \\ $- 15$};
\node[font=\tiny, align=center] at (3.5,2.5) {$8l$ \\ $- 14$};
\node[font=\tiny, align=center] at (2.5,3.5) {$8l$ \\ $- 14$};
\node[font=\tiny, align=center] at (3.5,3.5) {$12l$ \\ $- 27$};
\node[font=\tiny, align=center] at (4.5,2.5) {$2l$ \\ $- 5$};
\node[font=\tiny, align=center] at (2.5,4.5) {$2l$ \\ $- 5$};
\node[font=\tiny, align=center] at (3.5,4.5) {$8l$ \\ $- 22$};
\node[font=\tiny, align=center] at (4.5,3.5) {$8l$ \\ $- 22$};
\node[font=\tiny, align=center] at (5.5,3.5) {$2l$ \\ $- 7$};
\node[font=\tiny, align=center] at (3.5,5.5) {$2l$ \\ $- 7$};
\node[font=\tiny, align=center] at (4.5,4.5) {$12l$ \\ $- 39$};
\node[font=\tiny, align=center] at (5.5,5.5) {$12l$ \\ $- 51$};
\node[font=\tiny, align=center] at (6.5,6.5) {$12l$ \\ $- 63$};
\node[font=\tiny, align=center] at (5.5,4.5) {$8l$ \\ $- 30$};
\node[font=\tiny, align=center] at (4.5,5.5) {$8l$ \\ $- 30$};
\node[font=\tiny, align=center] at (6.5,5.5) {$8l$ \\ $- 38$};
\node[font=\tiny, align=center] at (5.5,6.5) {$8l$ \\ $- 38$};
\node[font=\tiny, align=center] at (6.5,4.5) {$2l$ \\ $- 9$};
\node[font=\tiny, align=center] at (4.5,6.5) {$2l$ \\ $- 9$};

\node at (7.5,7.6) {$\iddots$};
\node at (6.5,7.6) {$\iddots$};
\node at (5.5,7.6) {$\iddots$};
\node at (8.5,7.6) {$\iddots$};
\node at (9.5,7.6) {$\iddots$};
\node at (7.5,8.6) {$\iddots$};
\node at (6.5,8.6) {$\iddots$};
\node at (8.5,7.6) {$\iddots$};
\node at (8.5,6.6) {$\iddots$};
\node at (7.5,6.6) {$\iddots$};
\node at (7.5,5.6) {$\iddots$};
\node at (7.5,9.6) {$\iddots$};

\node at (8.5,8.6) {$\iddots$};
\node at (8.5,9.6) {$\iddots$};
\node at (9.5,8.6) {$\iddots$};
\node at (9.5,9.5) {45};
\node at (10.5,8.6) {$\iddots$};
\node at (8.5,10.6) {$\iddots$};
\node at (10.5,9.5) {26};
\node at (9.5,10.5) {26};
\node at (10.5,10.5) {33};
\node at (11.5,11.5) {21};
\node at (12.5,12.5) {10};
\node at (13.5,13.5) {2};
\node at (9.5,11.5) {5};
\node at (10.5,12.5) {3};
\node at (11.5,13.5) {1};

\node at (11.5,9.5) {5};
\node at (12.5,10.5) {3};
\node at (13.5,11.5) {1};

\node at (11.5,10.5) {18};
\node at (12.5,11.5) {10};
\node at (13.5,12.5) {3};

\node at (10.5,11.5) {18};
\node at (11.5,12.5) {10};
\node at (12.5,13.5) {3};

\end{tikzpicture}

\caption{The absolute values of the coefficients of $\LG(K_n)$ for $n \leq -1$.}\label{fig:FIG10}
\end{figure}

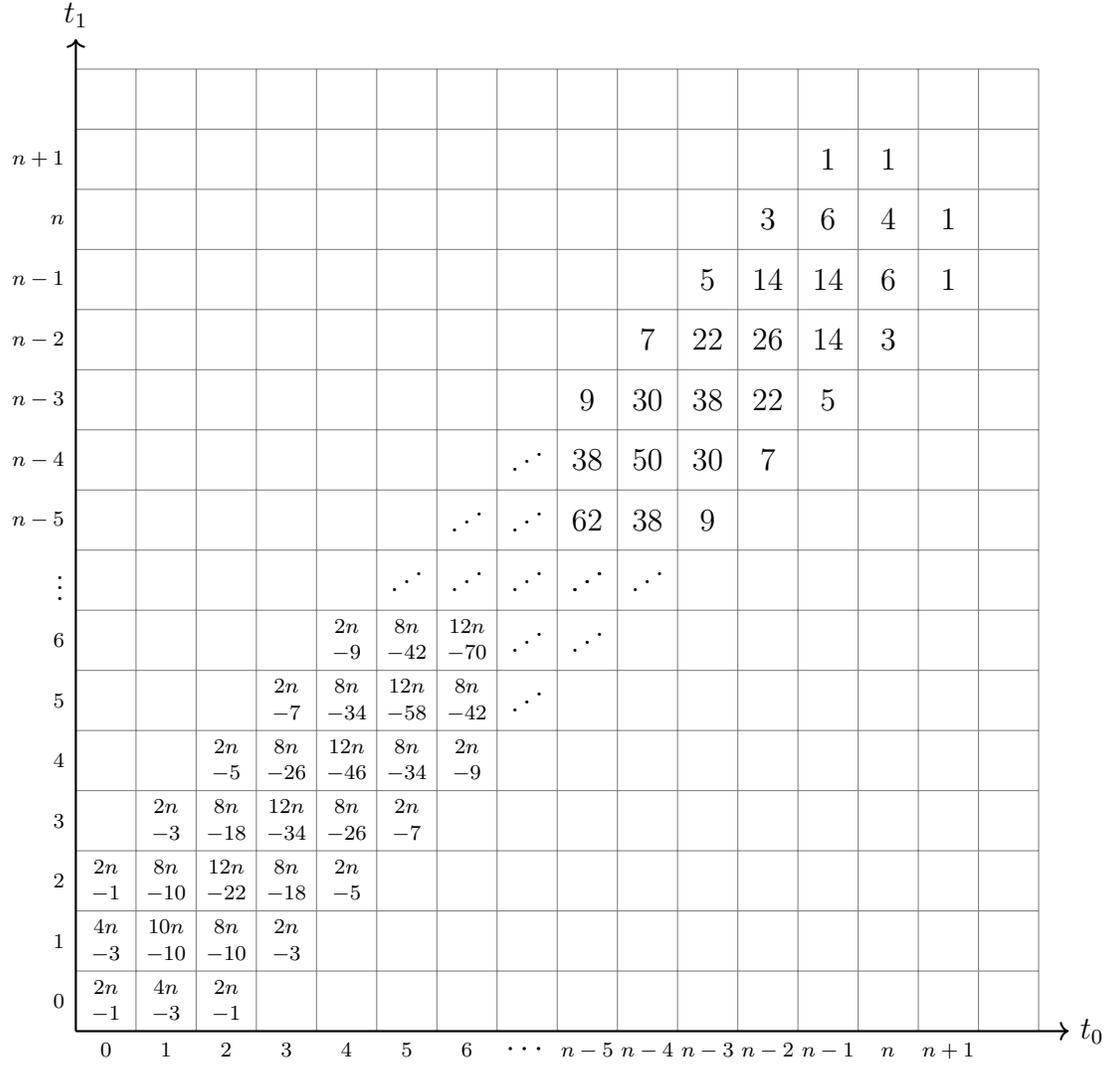
\begin{figure}[h!]
\begin{tikzpicture}[scale=0.8]

\draw[step=1cm,gray,thin] (0,0) grid (16,16);

\draw[->,thick] (0,0) -- (16.5,0) node[right] {$t_0$};

\draw[->,thick] (0,0) -- (0,16.5) node[above] {$t_1$};

\node[font=\tiny,left] at (0,0.5) {$0$};
\node[font=\tiny,left] at (0,1.5) {$1$};
\node[font=\tiny,left] at (0,2.5) {$2$};
\node[font=\tiny,left] at (0,3.5) {$3$};
\node[font=\tiny,left] at (0,4.5) {$4$};
\node[font=\tiny,left] at (0,5.5) {$5$};
\node[font=\tiny,left] at (0,6.5) {$6$};
\node[left] at (0,7.5) {$\vdots$};
\node[font=\tiny,left] at (0,8.5) {$n-5$};
\node[font=\tiny,left] at (0,9.5) {$n-4$};
\node[font=\tiny,left] at (0,10.5) {$n-3$};
\node[font=\tiny,left] at (0,11.5) {$n-2$};
\node[font=\tiny,left] at (0,12.5) {$n-1$};
\node[font=\tiny,left] at (0,13.5) {$n$};
\node[font=\tiny,left] at (0,14.5) {$n+1$};

\node[font=\tiny,below] at (0.5,0) {$0$};
\node[font=\tiny,below] at (1.5,0) {$1$};
\node[font=\tiny,below] at (2.5,0) {$2$};
\node[font=\tiny,below] at (3.5,0) {$3$};
\node[font=\tiny,below] at (4.5,0) {$4$};
\node[font=\tiny,below] at (5.5,0) {$5$};
\node[font=\tiny,below] at (6.5,0) {$6$};
\node[below] at (7.5,0) {$\cdots$};
\node[font=\tiny,below] at (8.5,0) {$n-5$};
\node[font=\tiny,below] at (9.5,0) {$n-4$};
\node[font=\tiny,below] at (10.5,0) {$n-3$};
\node[font=\tiny,below] at (11.5,0) {$n-2$};
\node[font=\tiny,below] at (12.5,0) {$n-1$};
\node[font=\tiny,below,align=center] at (13.5,0) {$n$\vphantom{1}};
\node[font=\tiny,below] at (14.5,0) {$n+1$};

\node[font=\tiny, align=center] at (0.5,0.5) {$2n$ \\ $- 1$};
\node[font=\tiny, align=center] at (1.5,0.5) {$4n$ \\ $- 3$};
\node[font=\tiny, align=center] at (0.5,1.5) {$4n$ \\ $- 3$};
\node[font=\tiny, align=center] at (0.5,2.5) {$2n$ \\ $- 1$};
\node[font=\tiny, align=center] at (2.5,0.5) {$2n$ \\ $- 1$};
\node[font=\tiny, align=center] at (1.5,1.5) {$10n$ \\ $- 10$};
\node[font=\tiny, align=center] at (2.5,1.5) {$8n$ \\ $- 10$};
\node[font=\tiny, align=center] at (1.5,2.5) {$8n$ \\ $- 10$};
\node[font=\tiny, align=center] at (3.5,1.5) {$2n$ \\ $- 3$};
\node[font=\tiny, align=center] at (1.5,3.5) {$2n$ \\ $- 3$};
\node[font=\tiny, align=center] at (2.5,2.5) {$12n$ \\ $- 22$};
\node[font=\tiny, align=center] at (3.5,2.5) {$8n$ \\ $- 18$};
\node[font=\tiny, align=center] at (2.5,3.5) {$8n$ \\ $- 18$};
\node[font=\tiny, align=center] at (3.5,3.5) {$12n$ \\ $- 34$};
\node[font=\tiny, align=center] at (4.5,2.5) {$2n$ \\ $- 5$};
\node[font=\tiny, align=center] at (2.5,4.5) {$2n$ \\ $- 5$};
\node[font=\tiny, align=center] at (3.5,4.5) {$8n$ \\ $- 26$};
\node[font=\tiny, align=center] at (4.5,3.5) {$8n$ \\ $- 26$};
\node[font=\tiny, align=center] at (5.5,3.5) {$2n$ \\ $- 7$};
\node[font=\tiny, align=center] at (3.5,5.5) {$2n$ \\ $- 7$};
\node[font=\tiny, align=center] at (4.5,4.5) {$12n$ \\ $- 46$};
\node[font=\tiny, align=center] at (5.5,5.5) {$12n$ \\ $- 58$};
\node[font=\tiny, align=center] at (6.5,6.5) {$12n$ \\ $- 70$};
\node[font=\tiny, align=center] at (5.5,4.5) {$8n$ \\ $- 34$};
\node[font=\tiny, align=center] at (4.5,5.5) {$8n$ \\ $- 34$};
\node[font=\tiny, align=center] at (6.5,5.5) {$8n$ \\ $- 42$};
\node[font=\tiny, align=center] at (5.5,6.5) {$8n$ \\ $- 42$};
\node[font=\tiny, align=center] at (6.5,4.5) {$2n$ \\ $- 9$};
\node[font=\tiny, align=center] at (4.5,6.5) {$2n$ \\ $- 9$};

\node at (7.5,7.6) {$\iddots$};
\node at (6.5,7.6) {$\iddots$};
\node at (5.5,7.6) {$\iddots$};
\node at (8.5,7.6) {$\iddots$};
\node at (9.5,7.6) {$\iddots$};
\node at (7.5,8.6) {$\iddots$};
\node at (6.5,8.6) {$\iddots$};
\node at (8.5,7.6) {$\iddots$};
\node at (8.5,6.6) {$\iddots$};
\node at (7.5,6.6) {$\iddots$};
\node at (7.5,5.6) {$\iddots$};
\node at (7.5,9.6) {$\iddots$};

\node at (8.5,8.5) {62};
\node at (8.5,9.5) {38};
\node at (9.5,8.5) {38};
\node at (9.5,9.5) {50};
\node at (10.5,8.5) {9};
\node at (8.5,10.5) {9};
\node at (10.5,9.5) {30};
\node at (9.5,10.5) {30};
\node at (10.5,10.5) {38};
\node at (11.5,11.5) {26};
\node at (12.5,12.5) {14};
\node at (13.5,13.5) {4};
\node at (9.5,11.5) {7};
\node at (10.5,12.5) {5};
\node at (11.5,13.5) {3};
\node at (12.5,14.5) {1};
\node at (11.5,9.5) {7};
\node at (12.5,10.5) {5};
\node at (13.5,11.5) {3};
\node at (14.5,12.5) {1};
\node at (11.5,10.5) {22};
\node at (12.5,11.5) {14};
\node at (13.5,12.5) {6};
\node at (14.5,13.5) {1};
\node at (10.5,11.5) {22};
\node at (11.5,12.5) {14};
\node at (12.5,13.5) {6};
\node at (13.5,14.5) {1};

\end{tikzpicture}

\caption{The absolute values of the coefficients of $\LG(K_n)(t_0^{-1}, t_1^{-1})$ for $n \geq 1$.}\label{fig:FIG11}
\end{figure}

\clearpage


\bibliographystyle{hamsalpha}
\bibliography{biblio}
\end{document}